\tikzset{
    state/.style={
           rectangle,
           rounded corners,
           draw=black, very thick,
           minimum height=2em,
           inner sep=2pt,
           text centered,
           },
}
\newtheorem{thm}{Theorem}
\newtheorem{prop}{Proposition}
\newtheorem{defin}{Definition}
\newtheorem*{prob9}{Problem 9}
\newtheorem*{prob10}{Problem 10}
\newcommand{\m}{\mathbf}
\newcommand{\bl}{\bullet}
\def\v{\,{\varsigma}}
\def\k{\mathbf{k}}
\def\l{\mathbf{l}}
\def\n{\mathbf{m}}
\def\a{\,{\mathbf{a}}}
\def\b{\,{\mathbf{b}}}
\def\d{\,{\rm{d}}}
\def\sm{\,{\rm{sm}}}
\def\cm{\,{\rm{cm}}}
\def\s{\,{\rm{sp}}}
\def\c{\,{\rm{cp}}}
\title[Abelian projective plane flows]
{Algebraic and abelian solutions to\\ 
the projective translation equation}
\author[G. Alkauskas]{Giedrius Alkauskas}
\address{Vilnius University, Department of Mathematics and Informatics, Naugarduko 24, LT-03225 Vilnius, Lithuania}
\email{giedrius.alkauskas@mif.vu.lt}
\newcounter{noteno}\setcounter{noteno}{0}
\newenvironment{Note}
	{\refstepcounter{noteno}
	\begin{small}
	\medbreak\par\noindent{{\bf Note ~\thenoteno}.}}
	{\hfill{$\Box$}\end{small}\par\medbreak}
\begin{document}
\begin{abstract} Let $\m{x}=(x,y)$. A projective two-dimensional flow is a solution to a $2-$dimensional projective translation equation (PrTE)
$(1-z)\phi(\m{x})=\phi(\phi(\m{x}z)(1-z)/z)$, $\phi:\mathbb{C}^{2}\mapsto\mathbb{C}^{2}$. Previously we have found all solutions of the PrTE which are rational functions. The rational flow gives rise to a vector field $\varpi(x,y)\bl \varrho(x,y)$ which is a pair of $2-$homogenic rational functions. On the other hand, only very special pairs of $2-$homogenic rational functions, as vector fields, give rise to rational flows. The main ingredient in the proof of the classifying  theorem is a reduction algorithm for a pair of $2$-homogenic rational functions. This reduction method in fact allows to derive more results. \\
\indent Namely, in this work we find all projective flows with rational vector fields whose orbits are algebraic curves. We call these flows \emph{abelian projective flows}, since either these flows are parametrized by abelian functions and with the help of 1-homogenic birational plane transformations (1-BIR) the orbits of these flows can be transformed into algebraic curves $x^{A}(x-y)^{B}y^{C}\equiv\mathrm{const.}$ (abelian flows of type I), or there exists a $1-$BIR which transforms the orbits into the lines $y\equiv\mathrm{const.}$ (abelian flows of type II), and generally the latter flows are described in terms of non-arithmetic functions.\\ 
\indent Our second result classifies all abelian flows which are given by two variable algebraic functions. We call these flows \emph{algebraic projective flows}, and these are abelian flows of type I. We also provide many examples of algebraic, abelian and non-abelian flows.

\end{abstract}

\pagestyle{fancy}
\fancyhead{}
\fancyhead[LE]{{\sc Abelian and algebraic flows}}
\fancyhead[RO]{{\sc G. Alkauskas}}
\fancyhead[CE,CO]{\thepage}
\fancyfoot{}

\date{January 7, 2016}
\subjclass[2010]{Primary 39B12, 14H05, 14K20. Secondary 35F05, 37E35.}
\keywords{Translation equation, projective flow, rational vector fields, iterable functions, linear ODE, linear PDE, abelian functions, algebraic functions}
\thanks{The research of the author was supported by the Research Council of Lithuania grant No. MIP-072/2015}

\maketitle

\setcounter{tocdepth}{1} 
\tableofcontents

\section{Background and main results}
For typographical reasons, we write $F(x,y)\bl G(x,y)$ instead of
$\big{(}F(x,y),G(x,y)\big{)}$. We also write $\m{x}=x\bl y$. For a general theory of the translation equation, we refer to \cite{aczel,moszner1,moszner2}. The 2-dimensional (affine) flows are treated in \cite{nikolaev}. The relation of projective flows with P\'{o}lya urn theory and integrable urns was shown in \cite{alkauskas}. The reader may consult 
\cite{flajolet-b, flajolet, flajolet-c} for the background on urns and elliptic functions.\\

The \emph{projective translation equation}, or PrTE, was first introduced in \cite{alkauskas-t} and is the equation of the form
\begin{eqnarray*}
(1-z)\phi(\m{x})=\phi\Big{(}\phi(\m{x}z)\frac{1-z}{z}\Big{)}.
\end{eqnarray*}
In a $2-$dimensional case, $\phi(x,y)=u(x,y)\bl v(x,y)$ is a pair of functions in two real or complex variables. The non-singular solution of this equation is called \emph{the projective flow}.  The \emph{non-singularity} means that a flow satisfies the boundary condition
\begin{eqnarray}
\lim\limits_{z\rightarrow 0}\frac{\phi(\m{x}z)}{z}=\m{x}.
\label{bound}
\end{eqnarray}
What follows next is an equivalent version of the PrTE. In fact, these two are equivalent in an unramified case \cite{alkauskas-un}. In a ramified case, which is the topic of the current paper (see further), the following form of the PrTE is unambiguous for fixed $\m{x}$ and sufficiently small $z,w$  and therefore should be used instead:
\begin{eqnarray}\setlength{\shadowsize}{2pt}\shadowbox{$\displaystyle{\quad
\frac{1}{z+w}\,\phi\big{(}\m{x}(z+w)\big{)}=\frac{1}{w}\,\phi\Big{(}\phi(\m{x}z)\frac{w}{z}\Big{)}},\quad w,z\in\mathbb{C}$.\quad}\label{funk}
\end{eqnarray}
The main object associated with a smooth projective flow is its \emph{vector field} given by
\begin{eqnarray}
\mathbf{v}(\phi;x,y)=\varpi(x,y)\bl\varrho(x,y)=\frac{\d}{\d z}\frac{\phi(xz,yz)}{z}\Big{|}_{z=0}.
\label{vec}
\end{eqnarray}
Vector field is necessarily a pair of $2$-homogenic functions. For smooth functions, the functional equation (\ref{funk}) implies the PDE
\begin{eqnarray}
u_{x}(x,y)(\varpi(x,y)-x)+
u_{y}(x,y)(\varrho(x,y)-y)=-u(x,y),\label{g-parteq}
\end{eqnarray}
and the same PDE for $v$, with boundary conditions given by (\ref{bound}); that is,
\begin{eqnarray*}
\lim\limits_{z\rightarrow 0}\frac{u(xz,yz)}{z}=x,\quad
\lim\limits_{z\rightarrow 0}\frac{v(xz,yz)}{z}=y.
\end{eqnarray*}
In principle, these two PDEs (\ref{g-parteq}) with the above boundary conditions are equivalent to (\ref{funk}).\\

Each point under a flow possesses the orbit, which is either a curve (when we deal with flows over $\mathbb{R}$) or a surface (complex curve over $\mathbb{C}$), or a single point. The orbit is defined by
\begin{eqnarray*}
\mathscr{V}(\m{x})=\Big{\{}\frac{\phi(\m{x}z)}{z}:z\in\mathbb{R}\text{ or }\mathbb{C}\Big{\}}.
\end{eqnarray*}

The orbits of the flow with the vector field $\varpi\bl\varrho$ are given by $\mathscr{W}(x,y)=\mathrm{const}.$, where $\mathscr{W}$ is an $N-$homogenic function which can be found from the differential equation
\begin{eqnarray}
N\mathscr{W}(x,y)\varrho(x,y)+\mathscr{W}_{x}(x,y)[y\varpi(x,y)-x\varrho(x,y)]=0.
\label{orbits}
\end{eqnarray}
We call $N$ \emph{the level of the flow}; that is, the degree of a curve if it is algebraic. If not, we put $N=1$.
\begin{defin}We call the flow $\phi$ \emph{an abelian projective flow}, if its vector field is rational, and its orbits are algebraic curves.
\end{defin}
The reason for this name is that it will appear later that for such a flow $\phi(\m{x})=u(x,y)\bl v(x,y)$ in a reduced form, the function $k(z)=\frac{u(xz,yz)}{v(xz,yz)}$ for fixed $x,y$ and varying $z$ will be the inversion of an abelian integral; hence, algebraic or abelian \cite{lang}. Also note that our research has a strong resemblance to the area of mathematics lying in the intersection of the theory of partial differential equations and algebraic geometry \cite{milne,mumford}.
\begin{defin}We call the flow $\phi$ \emph{an algebraic projective flow}, if its vector field is rational, and $u\bl v$ is a pair of algebraic functions.
\end{defin}
We note that, trivially, an algebraic flow is necessarily abelian.\\

The following is the main result of \cite{alkauskas}. Recall that $1-$BIR stands for a $1-$homogenic birational plane transformation.
\begin{thm}
Let $\phi(x,y)=u(x,y)\bl v(x,y)$ be a pair of rational functions in $\mathbb{C}(x,y)$ which satisfies the functional equation (\ref{funk}) and the boundary condition (\ref{bound}). Assume that $\phi(\m{x})\neq \phi_{{\rm id}}(\m{x}):=x\bl y$.
Then there exists an integer $N\geq 0$, which  is the basic invariant of the flow, called \emph{the level}. Such a flow $\phi(x,y)$ can be given by
\begin{eqnarray*}
\phi(x,y)=\ell^{-1}\circ\phi_{N}\circ\ell(x,y),
\end{eqnarray*}
where $\ell$ is a $1-$BIR, and $\phi_{N}$ is the canonical solution of level $N$ given by
\begin{eqnarray*}
\phi_{N}(x,y)=x(y+1)^{N-1}\bl \frac{y}{y+1}.
\label{can}
\end{eqnarray*}
\label{thm1}
\end{thm}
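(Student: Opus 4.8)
The plan is to trade the flow for its vector field and classify the latter up to pullback by $1$-BIRs. Set $\gamma_{\m{x}}(z)=\phi(\m{x}z)/z$; then the functional equation (\ref{funk}) amounts to the one-parameter group law $\gamma_{\m{x}}(z+w)=\gamma_{\gamma_{\m{x}}(z)}(w)$, and differentiating at $w=0$ together with (\ref{vec}) turns it into the autonomous system $\gamma_{\m{x}}'(z)=\m{v}(\gamma_{\m{x}}(z))$, $\gamma_{\m{x}}(0)=\m{x}$, where $\m{v}=\varpi\bl\varrho$. Thus $\phi(\m{x})=\gamma_{\m{x}}(1)$ is exactly the time-$1$ map of the integral curves of the $2$-homogenic field $\m{v}$, and conversely every $2$-homogenic rational field integrates to a germ of a solution of (\ref{funk})--(\ref{bound}), the excluded identity corresponding to $\m{v}\equiv0$; the $2$-homogeneity of $\m{v}$ is precisely what forces $\gamma_{\lambda\m{x}}(z)=\lambda\gamma_{\m{x}}(\lambda z)$, i.e. what makes the construction projective. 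A direct computation shows that conjugating $\phi$ by a $1$-BIR $\ell$ replaces $\m{v}$ by its pullback $\ell^{*}\m{v}$, which is again $2$-homogenic, so the theorem becomes the statement that every $2$-homogenic rational field with rational time-$1$ map can be carried by a $1$-BIR to the normal field $\varpi=(N-1)xy$, $\varrho=-y^{2}$ for some integer $N\ge0$, whose time-$1$ map is exactly $\phi_{N}$.

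I would then run the reduction in slope coordinates. Put $t=y/x$ and write $\varpi=x^{2}P(t)$, $\varrho=x^{2}Q(t)$ with $P,Q$ rational in one variable. After the reparametrization $\d\tau=x\,\d z$ the system decouples into the closed one-dimensional equation $\dot t=S(t)$ with $S:=Q-tP$, driving the radial part through $\frac{\d\log x}{\d\tau}=P(t)$; eliminating $\tau$ gives $\log x=\int(P/S)\,\d t$ along each orbit, which is a first integral of (\ref{orbits}). In these coordinates a $1$-BIR is generated by a M\"{o}bius change of the slope $t$ and a radial rescaling $x\mapsto x\,a(t)$: the former pulls back $S$ as a vector field on $\mathbb{P}^{1}$, the latter shifts $P$ by the coboundary $(\log a)'S$ while fixing $S$. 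Hence the $1$-BIR invariants of $\m{v}$ are the configuration of zeros and poles of $S$ on $\mathbb{P}^{1}$ together with the residues of $P/S$ at the zeros of $S$; and since $S$ is a section of $T\mathbb{P}^{1}$, of degree two, its zero divisor falls into one of the three M\"{o}bius types $S\equiv0$, a single double zero, or two distinct simple zeros, with the poles of $S$ as additional data.

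The decisive and hardest step is to impose rationality of the time-$1$ map. Integrating the decoupled equations and setting $z=1$ reduces this, after the normalizations above, to requiring that the auxiliary first-order linear equation $R'S-PR=1$ admit a rational solution $R(t)$ and that $\int(P/S)\,\d t$ exponentiate to an algebraic function. I would argue that among the three types of $S$ this survives only for $S\equiv0$ (the radial level $N=0$) and for the single double-zero type; normalizing the latter by a M\"{o}bius map and a radial rescaling to $S(t)=-Nt^{2}$, $P(t)=(N-1)t$, for which $R=1/t$ solves $R'S-PR=1$, returns the normal field $\varpi=(N-1)xy$, $\varrho=-y^{2}$ with first integral $xy^{N-1}$; this exhibits $N$ as the degree of the orbit in (\ref{orbits}), hence as a genuine $1$-BIR invariant, the level. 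The main obstacle is the exhaustiveness of this dichotomy: one must prove that two simple zeros of $S$, or any finite pole of $S$, or a non-rational residue pattern of $P/S$, is incompatible with rationality of the full time-$1$ map, and one must separately dispatch the low levels $N=0,1$ and check that $N$ is independent of the chosen $1$-BIR. This incompatibility analysis, ruling out every non-canonical zero/pole/residue arrangement, is the core of the reduction algorithm and carries essentially all of the difficulty.
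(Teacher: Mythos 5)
Your framework is the right one and is essentially the one the paper relies on: Theorem \ref{thm1} is quoted here from \cite{alkauskas}, whose proof is exactly a reduction algorithm for the $2$-homogenic vector field under $1$-BIR conjugation (the slope/radial splitting you describe is the same decomposition as the paper's $xA(x,y)\bl yA(x,y)$ normalization and equation (\ref{33}), and your auxiliary equation $R'S-PR=1$ is that equation in the coordinate $t=y/x$). However, as written this is a programme rather than a proof. The step you yourself flag as ``the main obstacle'' --- showing that two distinct simple zeros of $S$, any finite pole of $S$, or a non-integral residue pattern of $P/S$ is incompatible with rationality of the time-$1$ map --- is the entire content of the theorem; nothing in the proposal actually rules these out, and the examples in the present paper (e.g.\ the algebraic but non-rational flow of Proposition \ref{prop-alg}, whose $S$ has two simple zeros, versus the rational flow (\ref{phi2-3}) sharing the same orbits) show that the dichotomy cannot be read off from the zero divisor of $S$ alone: the rationality criterion of \cite{alkauskas} (whether $\frac{y\varpi_y-x\varrho_y}{y\varpi_x-x\varrho_x}$ is a M\"obius function) is a genuinely separate input that your sketch never derives.

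Two further concrete gaps. First, your claimed $1$-BIR invariants are not quite invariants: the radial rescaling $x\mapsto xa(t)$ shifts $P/S$ by the exact differential $(\log a)'$, whose residues are the (integer) orders of $a$, so the residues of $P/S$ at the zeros of $S$ are only well defined modulo $\mathbb{Z}$; this ambiguity is precisely what makes the classification delicate and cannot be glossed over. Second, nothing in the proposal explains why the parameter $N$ in the normal form $S=-Nt^2$, $P=(N-1)t$ must be a non-negative \emph{integer} (note $x(y+1)^{\sqrt2}\bl\frac{y}{y+1}$ is a perfectly good non-rational flow with this normal form, and $\phi_N$ for negative integer $N$ is still a pair of rational functions), nor why $N$ is independent of the chosen $\ell$; these are part of the statement and require the orbit-degree argument via (\ref{orbits}). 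Until the incompatibility analysis and these two points are supplied, the proof is incomplete.
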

The same result holds for $u,v\in\mathbb{R}(x,y)$.\\

In this paper we generalize this theorem. First, we classify all abelian flows. This is a rather general result, since all the interesting details occur only in particular examples; see Section \ref{abel-non-ell}.  
\begin{thm} Let $\phi(\m{x})$ be an abelian flow, $\phi(\m{x})\neq\phi_{\mathrm{id}}(\m{x})$. Then there are two possibilities. \\

$\mathrm{I.}$ There exists an $1$-BIR $\ell$ such that, if we define
\begin{eqnarray}
\eta(\m{x})=\ell^{-1}\circ\phi\circ\ell(\m{x}),
\label{eta}
\end{eqnarray} 
then $\eta(\m{x})$ is a flow whose vector field is given by
\begin{eqnarray}
\varpi(x,y)\bl\varrho(x,y)=(B-1)x^{2}+B(C-1)xy\bl (B-1)Cxy+(C-1)y^2,
\label{ova}
\end{eqnarray}  
where $B,C\in\mathbb{Q}\setminus\{1\}$, and orbits are algebraic curves
\begin{eqnarray*}
x^{(1-C)M}(x-y)^{(BC-1)M}y^{(1-B)M}\equiv\mathrm{const}.
\end{eqnarray*}
Here $M\in\mathbb{Q}$ is such that all exponents are integers. The vector fields $\varpi\bl\varrho$ with parameters $(B,C)$, $B,C\neq 1$, are linearly conjugate for $(B,C)$ being any pair from the triple
\begin{eqnarray*}
B,\quad C,\quad \frac{B+C-2}{BC-1}.
\end{eqnarray*} 

$\mathrm{II.}$ There exists an $1-$BIR $\ell$ such that if $\eta(\m{x})=u(x,y)\bl v(x,y)$ is again defined by (\ref{eta}), then $v(x,y)=y$. Orbits of the flow $\eta(\m{x})$ are the lines $y=\mathrm{const.}$, and if $\varpi(x,y)\bl 0$ is the vector field of $\eta(\m{x})$, then the numerator of $\varpi(x,y)$ has no multiple roots.
\label{thm2}
\end{thm}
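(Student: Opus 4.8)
The plan is to pass to the slope coordinate $t=y/x$, in which the $2$-homogenic data become one-dimensional. Writing $\varpi(x,y)=x^{2}p(t)$, $\varrho(x,y)=x^{2}q(t)$ and the $N$-homogenic orbit function as $\mathscr{W}(x,y)=x^{N}w(t)$, I would substitute into the orbit equation (\ref{orbits}); the power $x^{N+2}$ factors out, and after removing the remaining factor $t$ one is left with the scalar ODE $w'/w=Np(t)/(tp(t)-q(t))=:R(t)$, with $R$ rational, so that $w(t)=\exp\!\int R\,\d t$. The first decisive point is the algebraicity criterion: the orbit $\mathscr{W}=\mathrm{const.}$ is algebraic precisely when $w$ is an algebraic function, i.e. exactly when $R$ decomposes into partial fractions $\sum_{i}a_{i}/(t-\alpha_{i})$ with every $a_{i}\in\mathbb{Q}$ (only simple poles, rational residues, no polynomial part), whence $w(t)=\prod_{i}(t-\alpha_{i})^{a_{i}}$. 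The poles $\alpha_{i}$ are the \emph{special directions}, the zeros of the binary form $y\varpi-x\varrho=x^{3}(tp-q)$: geometrically the directions in which the field is radial, equivalently the fixed points of the flow induced on the projective line of slopes.

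Second, I would record how a $1$-BIR acts. A $1$-homogenic map $\ell=(xf(t))\bl(xg(t))$ sends slopes by $t\mapsto h(t)=g(t)/f(t)$, and birationality forces $h$ to be a Möbius transformation while leaving $f$ an essentially free rational factor; tracking the pullback of $\mathscr{W}$ gives the transformation rule $R_{\mathrm{new}}(h(t))\,h'(t)=R(t)-N\,f'(t)/f(t)$. Since $f'/f$ is rational with simple poles and integer residues, conjugation by a $1$-BIR can (i) add to each residue an arbitrary integer multiple of $N$, hence erase any special direction whose residue lies in $N\mathbb{Z}$, and (ii) move the surviving directions by any Möbius map; a scalar $\ell=\mu\cdot\mathrm{id}$ multiplies the whole field by $\mu$, so the time-scale is also at our disposal. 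The reduction algorithm underlying Theorem \ref{thm1} is exactly the systematic exploitation of (i) together with the radial factor $f$.

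The central step is to bound the number of special directions. Invoking the reduction algorithm of Theorem \ref{thm1}, I would first bring the rational field to its reduced form, a pair of binary quadratics; then $y\varpi-x\varrho$ is a binary cubic, so there are exactly three special directions counted with multiplicity. When they are distinct I use (ii) and the scalar freedom to normalize $y\varpi-x\varrho=\kappa\,xy(x-y)$, placing them at $t\in\{0,1,\infty\}$ and fixing $\kappa$; the quadratic pair with this prescribed cubic then depends, after the scale is fixed, on exactly two parameters, which I would name $(B,C)$ to reproduce (\ref{ova}). Reading the residues of $R=Np/(tp-q)$ at $0,1,\infty$ gives the exponents of the orbit $x^{(1-C)M}(x-y)^{(BC-1)M}y^{(1-B)M}$; these are rational exactly when $B,C\in\mathbb{Q}$, which is the algebraicity criterion in these coordinates. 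This is case $\mathrm{I}$. The residual symmetry permuting the three roots $\{0,1,\infty\}$ by the six anharmonic Möbius maps permutes the three exponents and so exhibits the claimed linear conjugacy of the triple $B,\;C,\;(B+C-2)/(BC-1)$.

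Finally I would analyse the degenerate configurations of the cubic $y\varpi-x\varrho$. The essential one is when, after reduction, the field is conjugate to the horizontal $\varpi\bl 0$: then $\varrho=0$, the factor $p$ cancels in $R=Np/(tp-q)=N/t$, the orbit function collapses to $w(t)=t^{N}$, and the orbits become the lines $y=\mathrm{const.}$, i.e. $v(x,y)=y$ in (\ref{eta}); this is case $\mathrm{II}$. Here the simple-pole requirement is vacuous for $R$ but reappears as the condition that the numerator of $\varpi$ have no multiple roots, which is what keeps the reconstructed $u$ inside the (generally non-arithmetic) class obtained by integrating along the lines, and which I would verify by following the poles through the cancellation. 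I expect two genuine obstacles: first, justifying that the reduction always terminates at a binary-quadratic field for an abelian flow — this is what makes the three-direction count exact and rules out four or more essential directions; and second, the case analysis of coincident roots of the cubic, showing cleanly that every reduced non-horizontal field falls into the family (\ref{ova}) while the horizontal degeneration is exactly case $\mathrm{II}$.
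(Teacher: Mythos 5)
Your overall architecture is the paper's: invoke the reduction algorithm behind Theorem \ref{thm1}, normalize the three roots of the cubic $y\varpi-x\varrho$ to $x=0$, $y=0$, $x=y$, read the orbit exponents off the residues of the logarithmic derivative of the orbit function (which is exactly how the paper obtains $x^{1-C}(x-y)^{BC-1}y^{1-B}\equiv\mathrm{const.}$ in Section 3), and use the anharmonic M\"obius group permuting $\{0,1,\infty\}$ for the conjugacy of the triple $B$, $C$, $\frac{B+C-2}{BC-1}$. The algebraicity criterion you state (only simple poles with rational residues and no polynomial part) is the correct engine, and the rationality of the exponent ratios does force $B,C\in\mathbb{Q}$.

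The genuine gap is in your derivation of case $\mathrm{II}$ and, relatedly, in the two ``obstacles'' you defer. In the paper, type $\mathrm{II}$ is \emph{not} the horizontal degeneration of the reduced quadratic family: it is the terminal \emph{obstruction} of the reduction algorithm of \cite{alkauskas}, in which the vector field is $\varpi\bl 0$ with $\varpi$ an arbitrary $2$-homogenic \emph{rational} function with squarefree numerator (e.g.\ $\frac{x^{4}}{y^{2}}+xy\bl 0$), and no further $1$-BIR can bring it to a pair of quadratic forms. Your assumption that the reduction ``always terminates at a binary-quadratic field'' is therefore false in general, and deriving $\mathrm{II}$ as the $\varrho=0$ member of the quadratic family captures only a sliver of that case; the correct dichotomy is ``reduction succeeds (quadratic forms) versus reduction obstructed ($\varpi\bl 0$, squarefree numerator).'' Symmetrically, your expectation that ``every reduced non-horizontal field falls into the family (\ref{ova})'' silently requires excluding the double- and triple-root configurations of $y\varpi-x\varrho$ within the quadratic family; these are precisely the non-abelian integral flows of Section \ref{sect-int}, with transcendental orbits such as $\exp\big(-\frac{x}{y}-\frac{x^{2}}{2y^{2}}\big)y=\mathrm{const.}$ Your own pole criterion delivers this exclusion (an uncancelled multiple root gives a higher-order pole of $R$, hence an exponential factor in $w$), but you flag it as an open step rather than carry it out, and as stated your expected conclusion for the coincident-root case is the wrong one.
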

As our second main result, we find all algebraic flows. This result is a generalization of Theorem \ref{thm1} and in fact implies the latter. For a non-negative integer $n$, and a rational number $Q$, 
$\frac{1}{Q}\notin\{-n,-n+1,\ldots,-1\}$, we need the following polynomial
\begin{eqnarray*}
P_{n,Q}(x):=\,_{2}F_{1}\Big{(}-n,1;1+\frac{1}{Q};x\Big{)},
\end{eqnarray*}
where
\begin{eqnarray*}
\,_{2}F_{1}(a,1;c;x)=\sum\limits_{i=0}^{\infty}\frac{(a)_{i}}{(c)_{i}}\,x^{i},\quad 
(q)_{i}=q(q+1)\cdots (q+i-1) \text{ for }i\geq 1,\quad (q)_{0}=1,
^{}\end{eqnarray*}
is the hypergeometric function. Indeed, $P_{n,Q}(x)$ are polynomials of degree $n$:
\begin{eqnarray*}
P_{0,Q}(x)&=&1,\\
P_{1,Q}(x)&=&1-\frac{Q}{Q+1}x,\\
P_{2,Q}(x)&=&1-\frac{2Q}{Q+1}x+\frac{2Q^2}{(Q+1)(2Q+1)}x^2,\\
P_{3,Q}(x)&=&1-\frac{3Q}{Q+1}x+\frac{6Q^2}{(Q+1)(2Q+1)}x^2
-\frac{6Q^3}{(Q+1)(2Q+1)(3Q+1)}x^3.\\
\end{eqnarray*}
\begin{thm} Let $\phi(\m{x})$ be an algebraic flow, $\phi(\m{x})\neq\phi_{\mathrm{id}}(\m{x})$. Then there exists an ordered pair $(n,Q)$, $n\in\mathbb{N}_{0}$, $Q\in\mathbb{Q}$, $\frac{1}{Q}\notin\{-n,-n+1,\ldots,-1\}$, called \emph{the type}, and an 1-BIR $\ell$ such that, if we define $\eta(\m{x})$ by (\ref{eta}), then the vector field of $\eta$ is given by
\begin{eqnarray*}
Q(n+1)x^{2}+(1-Q)xy\bl Qnxy+y^2;
\end{eqnarray*} 
(that is, up to a scalar conjugation, in Theorem \ref{thm2} we have $C=\frac{n}{n+1}$, $B=1-Q$). The flow $\eta(\m{x})=u(x,y)\bl v(x,y)$ itself is given explicitly by (\ref{alg-expli}) and implicitly from two algebraic equations
\begin{eqnarray*}
u^{\frac{1}{Q}}(u-v)^{-n-\frac{1}{Q}}v^{n+1}=x^{\frac{1}{Q}}(x-y)^{-n-\frac{1}{Q}}y^{n+1},\quad
\frac{1}{v}P_{n,Q}\Big{(}\frac{u}{v}\Big{)}=\frac{1}{y}P_{n,Q}\Big{(}\frac{x}{y}\Big{)}-1.
\end{eqnarray*}
The first, raised to a power $\mathrm{numer}(Q)$, is a polynomial equation, and the second is polynomial equation of total degree $2n+2$. 

\label{thm3}
\end{thm}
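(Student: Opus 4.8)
The plan is to realize the flow as the time-$1$ map of an autonomous planar ODE and then, along each orbit, reduce that ODE to a single Abelian integral whose algebraic invertibility is the entire content of the theorem. I begin with the dynamical reformulation. Writing $\psi(z)=\phi(\m{x}z)/z$, the functional equation (\ref{funk}) together with the defining relation (\ref{vec}) gives, after substituting $\phi(\m{x}z)=z\psi(z)$ into the right-hand side and letting the second increment tend to $0$, the clean autonomous system $\psi'(z)=\m{v}(\psi(z))$ with $\psi(0)=\m{x}$, so that $\phi(\m{x})=\psi(1)$. Thus $\phi$ is exactly the time-$1$ map of the $2$-homogenic field $\varpi\bl\varrho$, and by Euler's identity the orbit function $\mathscr{W}$ of (\ref{orbits}) is a genuine first integral, $\m{v}\cdot\nabla\mathscr{W}=0$. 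Since an algebraic flow is abelian, Theorem \ref{thm2} applies: after a $1$-BIR it is of type I or type II. I would first rule out type II, where $\varrho=0$ and each line $y=\mathrm{const}$ is invariant, so on it the flow solves $\int \mathrm{d}X/\varpi(X,y)=z$; because the numerator of $\varpi$ is squarefree, the rational integrand $1/\varpi$ has only simple poles and (in the nondegenerate case) nonzero residues, its antiderivative carries logarithms, and $u$ is transcendental. Hence an algebraic flow must be of type I.

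For the type I field (\ref{ova}) with parameters $B,C$ I pass to coordinates $t=X/Y$ and $Y$. A direct computation gives $t'=(B-1)(1-C)\,t(t-1)\,Y$ and $Y'=[(B-1)Ct+(C-1)]Y^{2}$, and confirms that $\mathscr{W}$ is conserved. Eliminating $Y$ via $\mathscr{W}=\mathrm{const}$ turns the first equation into the separated form $t^{\,a-1}(t-1)^{\,b-1}\,\mathrm{d}t=\mathrm{const}\cdot\mathrm{d}z$, where $a=\tfrac{1}{1-B}$ and $a+b=\tfrac{C}{C-1}$. Integrating produces the Abelian integral $\int t^{\,a-1}(t-1)^{\,b-1}\,\mathrm{d}t$, whose inverse is $t(z)=U(z)/V(z)$ with $U\bl V=\psi(z)$; this is precisely the promised inversion of an Abelian integral, and its algebraic nature will govern everything.

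The heart of the argument is the algebraicity criterion. The flow is algebraic exactly when this integral is an algebraic function of its endpoint, i.e.\ when the integrand has an algebraic antiderivative; by the classical theory of binomial/Beta differentials this happens iff one of the three local exponents at $t=0,1,\infty$ is an integer. The triple conjugacy symmetry $\big(B,C,\tfrac{B+C-2}{BC-1}\big)$ of Theorem \ref{thm2} permutes these three points, so I may normalize to $a+b=\tfrac{C}{C-1}=-n\in\mathbb{Z}_{\le 0}$, that is $C=\tfrac{n}{n+1}$, and set $Q=1-B$ (a scalar conjugation then rescales (\ref{ova}) to $Q(n+1)x^{2}+(1-Q)xy\bl Qnxy+y^{2}$). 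Under $a+b=-n$ the antiderivative equals $t^{a}(t-1)^{b}$ times a polynomial $R(t)$ of degree $n$ solving $t(t-1)R'+[(a+b)t-a]R=1$; solving this first-order linear ODE identifies $R$, up to a scalar, with the terminating hypergeometric polynomial $P_{n,Q}(t)$, equivalently with the relation $Qt(t-1)P_{n,Q}'(t)-(Qnt+1)P_{n,Q}(t)=-1$.

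Finally I assemble the two implicit equations and verify the polynomiality claims. The displayed hypergeometric relation says precisely that the \emph{rational} function $T(x,y)=\tfrac1y P_{n,Q}(x/y)$ satisfies $\tfrac{\mathrm{d}}{\mathrm{d}z}T(\psi(z))=-1$ along trajectories, whence $T(\psi(z))=T(\m{x})-z$; evaluating at $z=1$ gives the second equation $\tfrac1v P_{n,Q}(u/v)=\tfrac1y P_{n,Q}(x/y)-1$, while orbit invariance $\mathscr{W}(u,v)=\mathscr{W}(x,y)$ gives the first, and solving the two together yields the explicit formula for $u\bl v$. Clearing denominators finishes the proof: raising the orbit equation to the power $\mathrm{numer}(Q)$ turns the exponents $\tfrac1Q,-n-\tfrac1Q,n+1$ into integers, producing a polynomial equation, and multiplying the $T$-equation by $v^{n+1}y^{n+1}$ gives a polynomial relation whose unique top-degree term is $v^{n+1}y^{n+1}$, of total degree $2n+2$. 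The main obstacle is the third step: pinning down exactly when the Abelian integral is algebraically invertible, extracting the arithmetic normal form $C=n/(n+1)$ (using the conjugacy symmetry to reduce to it), and identifying the antiderivative polynomial with $P_{n,Q}$; the transcendence argument excluding type II, resting on the squarefree-numerator fact from Theorem \ref{thm2}, is the secondary technical point.
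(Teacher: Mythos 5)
Your proposal follows the paper's proof in all essentials: reduce via Theorem \ref{thm2} to type I or II, kill type II by a logarithm/monodromy argument, reduce type I to the separated Abelian integral, identify the polynomial antiderivative factor with $P_{n,Q}$ via the first-order ODE (\ref{sch}), and then read off the two implicit equations from orbit conservation and the relation $\frac{\d}{\d z}\big(\tfrac1y P_{n,Q}(x/y)\big)=-1$ along trajectories. The degree count for the second equation ($v^{n+1}y^{n+1}$ being the unique top term) matches the paper's claim.

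Two places are looser than the paper and one of them needs repair. First, in type II your parenthetical ``(in the nondegenerate case) nonzero residues'' dodges the actual degenerate case, which is not a vanishing residue at a simple pole (impossible) but $1/\varpi(1,t)$ having \emph{no} poles, i.e.\ being a polynomial $P(t)$; the paper closes this by noting that then the numerator of $\varpi$ is a power of $x$, contradicting squarefreeness. Second, and more seriously, your algebraicity criterion ``iff one of the three local exponents at $t=0,1,\infty$ is an integer'' is false as a biconditional: for $a=b=1/2$ the exponent at infinity is the integer $0$, yet $\int t^{-1/2}(t-1)^{-1/2}\,\d t$ is logarithmic; likewise $a+b=2$ or $a\in\mathbb{Z}_{\le 0}$ produce nonzero residues and hence logarithms. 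The correct necessity statement is that, after rotating by the triple symmetry, the exponent at infinity must be a \emph{negative} integer $-n-1$, $n\in\mathbb{N}_{0}$, and the non-degeneracy $\tfrac1Q\notin\{-n,\dots,-1\}$ must be extracted from the solvability of the triangular system for the coefficients of $R=P_{n,Q}$; your silent normalization to $a+b\in\mathbb{Z}_{\le 0}$ assumes exactly what the criterion is supposed to deliver. The paper gets necessity differently: it first proves, by restricting $u/v$ to a line $(x\omega,y\omega)$ and showing this restriction is a non-constant algebraic function of $\omega$ (the non-constancy argument via the boundary conditions is a step you also omit), that algebraicity of the flow forces $P_{n,Q}$ itself to have finite monodromy, and then appeals to Schwarz's list (or the same quadrature analysis done with the sign condition). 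Either route works, but the sign condition and the excluded values of $\tfrac1Q$ are precisely the content of the ``main obstacle'' you deferred.
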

We note that we can tell precisely when the flow $\phi$ is not only algebraic but in fact rational, since we have at our disposition the criteria (\cite{alkauskas}, Proposition 3). So, this happens only for the type $(n,0)$ (rational flow of level $0$ in the meaning of \cite{alkauskas} and Theorem \ref{thm1}), or type $(0,Q)$, $Q\in\mathbb{Z}$ (rational flow of level $Q$). \\ 

We will frequently need a $1-$BIR which transforms the second coordinate of the vector field into $\pm y^{2}$. This transformation can be written in the form $xA(x,y)\bl yA(x,y)$, where $A(x,y)$ is a $0-$homogenic function, and $A(x,1)=f(x)$ itself is found from the differential equation (see \cite{alkauskas}, Eq. (33))
\begin{eqnarray}
f(x)\varrho(x)+f'(x)(x\varrho(x)-\varpi(x))=\pm 1.
\label{33}
\end{eqnarray}
Here $\varrho(x)=\varrho(x,1)$, and $\varpi(x)=\varpi(x,1)$.

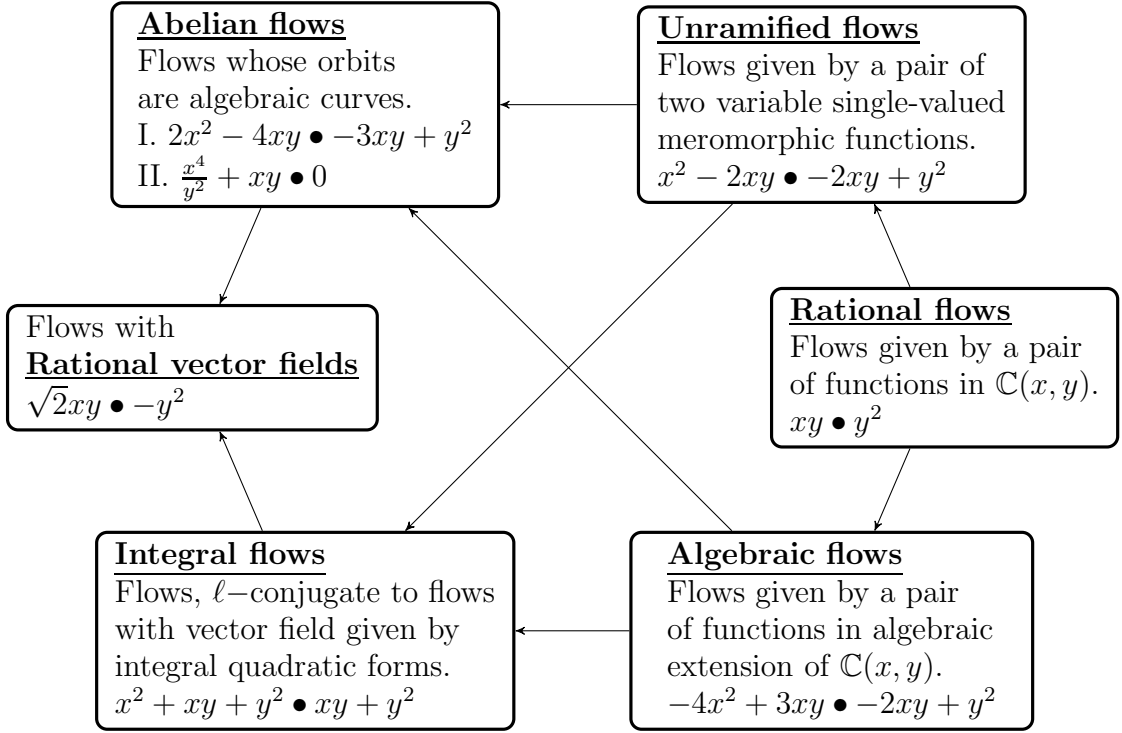
\begin{figure}
\begin{tikzpicture}[->,>=stealth']

 \node[state] (RVF) 
 {\begin{tabular}{l}
 Flows with\\
\underline{\textbf{Rational vector fields}}\\
$\sqrt{2}xy\bl-y^{2}$ 
 \end{tabular}}; 
 
 \node[state,
  yshift=3.5cm,
  xshift=1.5cm,
  anchor=center,
  text width=5cm] (ABF) 
 {
 \begin{tabular}{l}
  \underline{\textbf{Abelian flows}}\\
  Flows whose orbits\\
  are algebraic curves.\\
  I. $2x^2-4xy\bl-3xy+y^{2}$\\
  II. $\frac{x^4}{y^2}+xy\bl 0$
 
 \end{tabular}
 };
 
 \node[state,
 yshift=-3.5cm,
 xshift=1.5cm,] (RVF2) 
 {\begin{tabular}{l}
\underline{\textbf{Integral flows}}\\
Flows, $\ell-$conjugate to flows\\ 
with vector field given by\\
integral quadratic forms.\\ 
 $x^2+xy+y^2\bl xy+y^{2}$ 
 \end{tabular}};
 
 \node[state,
  xshift=8.5cm,
  yshift=3.5cm,
  node distance=5cm,
  anchor=center,
  text width=5cm] (UF) 
 {
 \begin{tabular}{l}
 \underline{\textbf{Unramified flows}}\\
 Flows given by a pair of\\
 two variable single-valued\\
 meromorphic functions.\\
 $x^2-2xy\bl-2xy+y^{2}$  
 \end{tabular}
 };
 
\node[state,
  yshift=0cm,
  xshift=10cm,
  node distance=2cm,
  anchor=center] (RF) 
 {
 \begin{tabular}{l}
 \underline{\textbf{Rational flows}}\\
 Flows given by a pair\\
 of functions in $\mathbb{C}(x,y)$.\\
 $xy\bl y^2$ 
 \end{tabular}
 }; 
  
 \node[state,
  xshift=8.5cm,
  yshift=-3.5cm,
  anchor=center,
  text width=5.2cm] (ALF) 
 {
 \begin{tabular}{l}
  \underline{\textbf{Algebraic flows}}\\
 Flows given by a pair\\
 of functions in algebraic\\ extension of $\mathbb{C}(x,y)$.\\
 $-4x^2+3xy\bl-2xy+y^{2}$
 \end{tabular}
 };

 \path  (ABF)	edge  node[anchor=south,above]{} (RVF)
(UF)    edge                 (ABF)                                     
(RF)    edge                 (UF)         
(UF)    edge (RVF2)
(RF)    edge                 (ALF)                             
(ALF)  	edge                (ABF)
(ALF)  	edge                (RVF2)
(RVF2)  	edge                 (RVF) ;
\end{tikzpicture}
\caption{Diagram of arithmetic classification of projective plane flows. The arrow stands for the proper inclusion. The intersection of unramified and algebraic flows is exactly the set of rational flows. Everywhere we assume that vector field is a pair of rational functions, necessarily $2-$homogenic. Of course, the variety of unramified or algebraic flows whose vector field is non-rational is much bigger. In each category we give an example of the vector field. These examples are treated in detail in the text, and in each case the vector field is not in the image of an inclusion represented by the arrow. Algebraic flows are abelian flows of type I.}
\label{diagram}
\end{figure}

\section{Special examples}
We now proceed with eight examples illustrating the Diagram in Figure \ref{diagram}, and Theorems \ref{thm1}, \ref{thm2}, and \ref{thm3}. In the first four cases the vector field is of the form $ax^2+bxy\bl cxy+y^2$, $a,b,c\in\mathbb{Z}$, $a+b=c+1$. This guarantees that the orbits are given by the curves $x^{A}(x-y)^{B}y^{C}=\mathrm{const.}$, and at least on the lines $x=0$, $y=0$, $x=y$ the flow reduces to a one-dimensional flow, and so can be easily given a simple expression.\\

First, a trivial example just for completeness.
\begin{prop} The flow $\phi(\m{x})$ generated by the the vector field $xy\bl y^2$ is rational and is given by the expression
\begin{eqnarray*}
\phi(\m{x})=\frac{x}{1-y}\bl\frac{y}{1-y}.
\end{eqnarray*}
\end{prop}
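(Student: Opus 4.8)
The plan is to exploit a general principle that follows directly from the functional equation~(\ref{funk}) together with the definition~(\ref{vec}) of the vector field: the radial trajectory $\psi(z):=\phi(\m{x}z)/z$ solves the autonomous ODE attached to the vector field. Writing $\psi(z)=p(z)\bl q(z)$ with $p(z)=u(xz,yz)/z$ and $q(z)=v(xz,yz)/z$, I would establish
\begin{eqnarray*}
p'(z)=\varpi\big(p(z),q(z)\big),\qquad q'(z)=\varrho\big(p(z),q(z)\big),\qquad p(0)\bl q(0)=\m{x},
\end{eqnarray*}
and then recover the flow by evaluating at $z=1$, since $p(1)\bl q(1)=u(x,y)\bl v(x,y)=\phi(\m{x})$. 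This reduces the proposition to integrating an elementary system. To derive the ODE I would fix $\m{x}$ and substitute $\phi(\m{x}z)=z\,\psi(z)$ into~(\ref{funk}), obtaining $\psi(z+w)=\phi(\psi(z)w)/w$; differentiating in $w$ at $w=0$ and invoking the boundary condition~(\ref{bound}) and the definition~(\ref{vec}) gives $\psi'(z)=\varpi(\psi(z))\bl\varrho(\psi(z))$, i.e. the system above, here with $\varpi\bl\varrho=xy\bl y^{2}$.

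Second, I would integrate the system $p'=pq$, $q'=q^{2}$ with $p(0)=x$, $q(0)=y$. The equation for $q$ is separable and yields $q(z)=y/(1-yz)$; feeding this into $p'/p=q$ renders the $p$-equation linear in $\log p$, with solution $p(z)=x/(1-yz)$. Evaluating at $z=1$ produces
\begin{eqnarray*}
\phi(\m{x})=\frac{x}{1-y}\bl\frac{y}{1-y},
\end{eqnarray*}
which is manifestly a pair of rational functions, as claimed. As an independent check one could instead simply verify that this $\phi$ satisfies the PDE~(\ref{g-parteq}) for both $u$ and $v$ together with the boundary conditions, and recompute its vector field from~(\ref{vec}); I prefer the constructive route above.

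There is essentially no genuine obstacle in this (deliberately trivial) case. The only point that formally requires care is the reduction from the functional equation to the autonomous ODE, namely the interchange of the $w$-limit with differentiation, which is routine for these smooth/rational flows. It is worth noting \emph{why} the example is so simple: here $y\varpi-x\varrho=xy^{2}-xy^{2}\equiv 0$, so~(\ref{orbits}) forces the level $N=0$ and every orbit is a line $x/y=\mathrm{const.}$ through the origin — consistent with $u/v=x/y$ in the answer. Finally, I would remark that this flow is the $1$-BIR conjugate, via $\ell(x,y)=x\bl(-y)$, of the canonical level-$0$ flow $\phi_{0}(x,y)=\frac{x}{y+1}\bl\frac{y}{y+1}$ of Theorem~\ref{thm1}, so the proposition is a special case of that classification.
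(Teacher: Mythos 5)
Your proof is correct. The paper itself offers no argument for this proposition (it is stated as "a trivial example just for completeness"), but your route --- deriving the characteristic system $p'=\varpi(p,q)$, $q'=\varrho(p,q)$ from (\ref{funk}) and integrating it --- is exactly the general integration method the paper deploys for its nontrivial examples (cf.\ the system (\ref{sys}) and the surrounding discussion), and your verification against the PDE (\ref{g-parteq}), the observation that $y\varpi-x\varrho\equiv 0$ forces level $N=0$, and the identification of $\phi$ as the $1$-BIR conjugate of $\phi_{0}$ via $\ell(x,y)=x\bl(-y)$ all agree with the paper's later remark that the vector field $xy\bl y^{2}$ is the $Q=0$ case yielding $\frac{x}{1-y}\bl\frac{y}{1-y}$.
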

Next, the following is an algebraic flow of type $(1,-2)$; see Theorem \ref{thm3}.
\begin{prop}
The flow $\Phi(\m{x})=U(x,y)\bl V(x,y)$ generated by the vector field $(-4x^2+3xy)\bl(-2xy+y^2)$ is algebraic and is given by the expression
\begin{eqnarray*}
\Phi(\m{x})=\frac{y\sqrt{4x+(y-1)^2}+y^2+2x-y}{8x+2(y-1)^2}\bl \frac{y}{\sqrt{4x+(y-1)^2}}.
\end{eqnarray*}
The orbits of this flow are genus $0$ curves $u^{-1}(u-v)^{-1}v^{4}=\mathrm{const.}$
\label{prop-alg}
\end{prop}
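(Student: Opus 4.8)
The plan is to recognize the given vector field as an instance of the canonical family in Theorem~\ref{thm3} and then to solve the two implicit algebraic equations it furnishes. Writing $\varpi\bl\varrho=(-4x^2+3xy)\bl(-2xy+y^2)$ and comparing with the template $Q(n+1)x^2+(1-Q)xy\bl Qnxy+y^2$, one reads off $Q(n+1)=-4$, $1-Q=3$, $Qn=-2$, which forces $(n,Q)=(1,-2)$. Since the field is already in canonical position, the $1$-BIR in (\ref{eta}) is the identity and $\eta=\Phi$, so the proposition reduces to specializing Theorem~\ref{thm3} at $(n,Q)=(1,-2)$ and extracting a closed form for $U\bl V$.

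With these parameters the exponents in the first (orbit) equation become $\frac1Q=-\frac12$, $-n-\frac1Q=-\frac12$, $n+1=2$, so it reads $u^{-1/2}(u-v)^{-1/2}v^2=x^{-1/2}(x-y)^{-1/2}y^2$; squaring gives exactly $u^{-1}(u-v)^{-1}v^4=x^{-1}(x-y)^{-1}y^4$, which I adopt as the orbit relation $\mathscr{W}(u,v)=\mathscr{W}(x,y)$ with $\mathscr{W}=u^{-1}(u-v)^{-1}v^4$. Since $P_{1,Q}(t)=1-\frac{Q}{Q+1}t$ specializes to $P_{1,-2}(t)=1-2t$, the second equation becomes $\frac1v-\frac{2u}{v^2}=\frac1y-\frac{2x}{y^2}-1=:S$. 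This is linear in $u$, so $u=\tfrac12 v(1-Sv)$; substituting into $v^4=R\,u(u-v)$, where $R:=x^{-1}(x-y)^{-1}y^4$, and dividing by $v^2$ leaves a single linear equation for $v^2$, namely $v^2=-R/(4-RS^2)$. Clearing the common factor $x(x-y)$, the denominator reduces to $4x(x-y)-(y-2x-y^2)^2=-y^2\bigl((y-1)^2+4x\bigr)$, whence $v^2=y^2/(4x+(y-1)^2)$ and $V=y/\sqrt{4x+(y-1)^2}$; back-substitution into $u=\tfrac12 v(1-Sv)$ returns the stated $U$.

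To fix the sign of the radical I impose the boundary condition (\ref{bound}): replacing $\m{x}$ by $\m{x}z$ turns $4x+(y-1)^2$ into $1-2yz+(4x+y^2)z^2$, whose square root tends to $+1$ as $z\to0$, so that $V(\m{x}z)/z\to y$ and $U(\m{x}z)/z\to x$ precisely for the positive branch; this selects the branch in the statement. Algebraicity is then manifest, as $U$ and $V$ lie in the quadratic extension $\mathbb{C}(x,y)\bigl(\sqrt{4x+(y-1)^2}\bigr)$. For the orbits, the squared first equation is already the invariant $\mathscr{W}=\mathrm{const}$; to see genus $0$, I set $t=u/v$ in $v^4=c\,u(u-v)$, which reduces the curve to the conic $v^2=c\,t(t-1)$. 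A conic is rational, so each orbit is a genus $0$ curve, consistent with $\Phi$ being algebraic rather than merely abelian.

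The elimination is short because the reduction to $v^2$ is linear, so the only genuine subtlety I anticipate is the sign bookkeeping: one must check that a single branch of $\sqrt{4x+(y-1)^2}$ is simultaneously consistent with (\ref{bound}) in both coordinates, which the expansion above confirms. Since the proposition is stated before the proof of Theorem~\ref{thm3}, as an independent cross-check I would also verify directly that the explicit $U\bl V$ satisfies the two PDEs (\ref{g-parteq}) together with (\ref{bound}); by the equivalence of (\ref{g-parteq}) plus (\ref{bound}) with (\ref{funk}) recorded in the excerpt, this certifies that $\Phi$ is a flow with vector field $(-4x^2+3xy)\bl(-2xy+y^2)$, at the cost of one round of differentiating the radical.
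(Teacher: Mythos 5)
Your derivation is correct, but it takes a different route from the paper's. The paper integrates the vector field directly: it solves (\ref{33}) to get the $1$-BIR $\ell(x,y)=\frac{2x^2-xy}{y}\bl 2x-y$ that turns the second component of the vector field into $-y^2$, solves the associated ODE system (\ref{sys}) for $p(t),q(t)$ (obtaining $p(t)=-\frac{1}{2t}-\frac{1}{2\sqrt{t^2-4}}$), reads off the conjugated flow $u\bl v$ from the general parametrization $u(p(a)\v,q(a)\v)=p(a-\v)\v$, and finally conjugates back by $\ell$; the specialization of Theorem \ref{thm3} to $(n,Q)=(1,-2)$ appears there only as an a posteriori ``triple-check,'' with the remark that solving (\ref{nQ}) together with the orbit equation recovers $U$ and $V$. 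You promote that remark to the primary derivation: identify the type $(1,-2)$, note the $1$-BIR in (\ref{eta}) is the identity, and eliminate between $\frac1V-\frac{2U}{V^2}=\frac1y-\frac{2x}{y^2}-1$ and $U^{-1}(U-V)^{-1}V^4=x^{-1}(x-y)^{-1}y^4$, which is linear in $U$ and then in $V^2$; your algebra checks out, and your branch analysis via the expansion of $\sqrt{1-2yz+(4x+y^2)z^2}$ correctly pins down the sign consistent with (\ref{bound}) in both coordinates. What each approach buys: yours is much shorter and makes algebraicity and the genus-$0$ parametrization $v^2=ct(t-1)$ transparent, but as stated it leans on Theorem \ref{thm3}, which is proved only later in the paper --- so, as you rightly note, to be self-contained you must fall back on the direct verification that the explicit $U\bl V$ satisfies (\ref{g-parteq}) with (\ref{bound}) (plus local uniqueness of that solution), which is exactly the closing step the paper also performs; the paper's longer constructive route needs no such appeal and, as a byproduct, exposes the connection with the rational flow $\phi^{(2)}_3$ of (\ref{phi2-3}) sharing the same orbits.
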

Next is an unramified elliptic flow, which was treated in detail in \cite{alkauskas-un}.
\begin{prop}The flow $\Lambda(\m{x})=\lambda(x,y)\bl\lambda(y,x)$ generated by the vector field $(x^2-2xy)\bl(-2xy+y^2)$ is elliptic and can be given the analytic expression 
\begin{eqnarray*}
\Lambda(\m{x})=\frac{\v\big{(}c\v^2-sc^2y\v+s^2xy\big{)}^{2}}
{y\big{(}x-c^3y\big{)}\big{(}c^2\v^2-sx\v+s^2cxy\big{)}}
\bl\frac{\v\big{(}c^2\v^2-sx\v+s^2cxy\big{)}^{2}}
{x\big{(}x-c^3y\big{)}\big{(}c\v^2-sc^2y\v+s^2xy\big{)}};
\end{eqnarray*}
here $\v=\v(x,y)=[xy(x-y)]^{1/3}$, and $s=\sm(\v),c=\cm(\v)$ are the Dixonian elliptic functions \cite{alkauskas-un,flajolet-b}. Thus, these are the functions which parametrize the Fermat cubic $x^{3}+y^{3}=1$, and also $\sm(0)=0$, $\sm'=\cm^{2}$. The orbits of this flow are genus $1$ curves $xy(x-y)=\mathrm{const}$. 
\label{prop-ell}
\end{prop}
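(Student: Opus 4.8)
The plan is to combine a direct verification of the orbit structure with the constructive reduction-to-one-dimension machinery underlying Theorem \ref{thm1} and equation (\ref{33}), whose output is then matched against the Dixonian parametrization. First I would confirm the orbits. A direct computation gives $y\varpi-x\varrho=y(x^2-2xy)-x(-2xy+y^2)=3xy(x-y)$, so taking $\mathscr{W}=xy(x-y)$ and $N=3$ one checks that $N\mathscr{W}\varrho+\mathscr{W}_{x}[y\varpi-x\varrho]=0$ identically, whence the orbits are the cubics $xy(x-y)\equiv\mathrm{const}$, as in (\ref{orbits}). The projective closure $XY(X-Y)=cW^{3}$ is smooth for $c\neq 0$ (three distinct points at infinity, no affine singularities), hence of genus $1$. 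The vector field is invariant under $x\leftrightarrow y$, and this symmetry forces the solution into the shape $\Lambda(\m{x})=\lambda(x,y)\bl\lambda(y,x)$.

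Second, I would reduce to one variable. The quantity $\v^{3}=xy(x-y)$ is $3$-homogenic and constant along orbits, while $\v(xz,yz)=z\v(x,y)$, so $\v$ plays the role of the flow parameter. Applying (\ref{33}) to this field, with $\varrho(x)=-2x+1$, $\varpi(x)=x^{2}-2x$ and $x\varrho(x)-\varpi(x)=3x(1-x)$, the $1-$BIR normalizing $\varrho$ to $\pm y^{2}$ is governed by the linear ODE $(-2x+1)f(x)+3x(1-x)f'(x)=\pm 1$, whose homogeneous solution is $[x(1-x)]^{-1/3}$ and whose particular solution introduces the abelian integral $\int\frac{\d x}{[x(1-x)]^{2/3}}$. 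This is exactly where the elliptic nature enters: the curve $w^{3}=x(1-x)$ is genus $1$ with $j=0$, isomorphic (as the equianharmonic curve) to the Fermat cubic $s^{3}+c^{3}=1$, so the inverse of this integral is, after normalization, the Dixonian function $\sm$, with companion $\cm$ and the relations $\sm'=\cm^{2}$ and $\sm^{3}+\cm^{3}=1$.

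Third, I would reconstruct the flow. Inverting the abelian integral encodes the position along each orbit through $s=\sm(\v)$, $c=\cm(\v)$; substituting back through the normalizing $1-$BIR and its inverse and then rehomogenizing produces coordinates rational in $(s,c,\v,x,y)$, which I expect to assemble precisely into the displayed expression, the factors $c\v^{2}-sc^{2}y\v+s^{2}xy$ and $c^{2}\v^{2}-sx\v+s^{2}cxy$ being the images of the orbit cubic under the normalizing map. Finally I would verify the boundary condition (\ref{bound}) through the $z\to 0$ asymptotics: since $\v\mapsto z\v$, $\sm(z\v)\sim z\v$ and $\cm(z\v)\to 1$, a short computation of the leading orders gives $\lambda(xz,yz)/z\to x$ and, by the $x\leftrightarrow y$ symmetry, the second coordinate $\to y$, confirming non-singularity.

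The main obstacle is concentrated in the last two steps and is twofold. Identifying the period of $\int\d x/[x(1-x)]^{2/3}$ with the Dixonian period demands the explicit isomorphism between $w^{3}=x(1-x)$ and the Fermat cubic together with a careful matching of normalizations; and, more laboriously, converting the one-dimensional solution back into the two-variable rational expression---equivalently, proving that the displayed $\Lambda(\m{x})$ satisfies the functional equation (\ref{funk})---relies on the addition theorem for $\sm,\cm$ (the group law on the cubic), which is precisely what makes the bookkeeping of the quadratic factors in $s$, $c$, $\v$ delicate.
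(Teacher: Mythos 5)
Your outline is correct and follows essentially the same integration scheme the paper uses: the paper itself gives no proof of this proposition, deferring entirely to the earlier work \cite{alkauskas-un}, but the method you describe --- computing the orbit cubic from (\ref{orbits}), normalizing $\varrho$ to $\pm y^{2}$ via (\ref{33}), inverting the resulting abelian integral, and reassembling via the addition formulas --- is exactly the procedure the paper carries out in full for the analogous flows $\Delta$ in Section \ref{abel-non-ell} and $\Xi$ in Section \ref{sect-int}. Your computations ($y\varpi-x\varrho=3xy(x-y)$, the ODE $(-2x+1)f+3x(1-x)f'=\pm 1$, the equianharmonic identification with the Fermat cubic, and the $z\to 0$ asymptotics) all check out, and you correctly locate the remaining labor in the Dixonian addition theorem.
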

Next, we present an analytic expression for a non-elliptic abelian flow of type I whose orbits are genus $2$ curves. The expression for flows with orbits of the form $x^A(x-y)^{B}y^C=\mathrm{const.}$ whose genus is $g\geq 2$ is analogous.
\begin{prop}
The flow $\Delta(\m{x})=\vartheta(x,y)\bl \xi(x,y)$ generated by the vector field $(2x^2-4xy)\bl(-3xy+y^2)$ is non-elliptic abelian flow and is given by the analytic expression
\begin{eqnarray*}
\Delta(\m{x})=
\frac{\k^{4/5}\big{(}\alpha(\frac{x}{y})-\v\big{)}\v}{\Big{(}\k\big{(}\alpha(\frac{x}{y})-\v\big{)}-1\Big{)}^{2/5}}\bl
\frac{\v}{\k^{1/5}\big{(}\alpha(\frac{x}{y})-\v\big{)}\cdot\Big{(}\k\big{(}\alpha(\frac{x}{y})-\v\big{)}-1\Big{)}^{2/5}}.
\end{eqnarray*}
Here $\v=\v(x,y)=[x(x-y)^{2}y^{2}]^{1/5}$, $\alpha$ is an abelian integral
\begin{eqnarray*}
\alpha(x)=\frac{1}{5}
\int\limits_{1}^{\frac{1}{1-x}}\frac{\d t}{t^{3/5}(t-1)^{4/5}},
\end{eqnarray*}
and $\k$ is an abelian function, the inverse of $\alpha$. The orbits of this flow are genus $2$ curves $x(x-y)^{2}y^{2}=\mathrm{const.}$ In the special case, one has
\begin{eqnarray*}
\frac{\vartheta(x,-x)}{\xi(x,-x)}=\k\big{(}c-4^{1/5}x\big{)},\quad c=\alpha(-1).
\end{eqnarray*}
The pair of abelian functions $\big{(}\k(z),\k'(z)\big{)}$ parametrizes (locally) the genus $2$ singular curve $5^{5}(1-x)^{3}x^{4}=y^{5}$. In particular, one can give an alternative expression
\begin{eqnarray*}
\Delta(\m{x})=
\frac{5^{2/3}\k^{4/3}\big{(}\alpha(\frac{x}{y})-\v\big{)}\v}{\k'\,^{2/3}\big{(}\alpha(\frac{x}{y})-\v\big{)}}\bl
\frac{5^{2/3}\k^{1/3}\big{(}\alpha(\frac{x}{y})-\v\big{)}\v}{\k'\,^{2/3}\big{(}\alpha(\frac{x}{y})-\v\big{)}}.
\end{eqnarray*}
\label{prop-abel}
\end{prop}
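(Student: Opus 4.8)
\emph{The plan is} to recover $\Delta$ by integrating the ODE attached to the vector field and then collapsing that system to a single abelian integral. First I would exploit the fact, implicit in (\ref{funk})--(\ref{vec}), that the curve $\Psi(z):=\phi(\m{x}z)/z=p(z)\bl q(z)$ satisfies the autonomous system
\begin{eqnarray*}
\dot p=\varpi(p,q)=2p^2-4pq,\qquad \dot q=\varrho(p,q)=-3pq+q^2,
\end{eqnarray*}
with $\Psi(0)=\m{x}$ by (\ref{bound}): substituting $\phi(\m{x}z)=z\Psi(z)$ into (\ref{funk}) gives $\Psi(z+w)=\phi(\Psi(z)w)/w$, and differentiating in $w$ at $w=0$ yields $\Psi'(z)=\mathbf{v}(\phi;\Psi(z))$, so the sought flow is simply $\Delta(\m{x})=\Psi(1)$. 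A direct computation, or (\ref{orbits}) with $N=5$, shows that $\mathscr{W}=p(p-q)^2q^2$ is a first integral of this system, so $\v=[x(x-y)^2y^2]^{1/5}$ is constant along each trajectory. This both confirms that the orbits are the genus $2$ curves $x(x-y)^2y^2=\mathrm{const}$ and supplies the conserved quantity needed to reduce the system to one dimension.

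Next I would introduce the projective coordinate $\tau=\frac{q}{q-p}$ along a fixed orbit. From $q\dot p-p\dot q=\varpi q-p\varrho=5pq(p-q)$ one finds $\dot\tau=-5q(\tau-1)$, while solving $\mathscr{W}=\v^5$ on the orbit gives $q=\v\,\tau^{3/5}(\tau-1)^{-1/5}$. Substituting produces the separable equation
\begin{eqnarray*}
\frac{\d\tau}{\tau^{3/5}(\tau-1)^{4/5}}=-5\v\,\d z,
\end{eqnarray*}
whose left-hand side is exactly the differential defining $\alpha$. Since $\tau(0)=\frac{y}{y-x}=\frac{1}{1-x/y}$ is precisely the upper limit in $\alpha(\frac{x}{y})$, integrating from $0$ to $z$ and inverting $\alpha$ (whose inverse is $\k$) gives $\tau(z)=\big(1-\k(\alpha(\frac{x}{y})-\v z)\big)^{-1}$.

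The explicit formula for $\Delta$ then follows by back-substitution at $z=1$. Writing $K:=\k(\alpha(\frac{x}{y})-\v)$ we get $\tau(1)=(1-K)^{-1}$, hence $q(1)=\v K^{-1/5}(1-K)^{-2/5}$ and $p(1)=K\,q(1)=\v K^{4/5}(1-K)^{-2/5}$, which are $\xi$ and $\vartheta$ up to the harmless identification of $(1-K)^{2/5}$ with $(K-1)^{2/5}$ modulo a fifth root of unity. For the remaining assertions I would differentiate $\alpha$ to get $\alpha'(x)=\tfrac15 x^{-4/5}(1-x)^{-3/5}$, whence $\k'=5\k^{4/5}(1-\k)^{3/5}$ and therefore $(\k')^5=5^5\k^4(1-\k)^3$, so that $(\k,\k')$ parametrizes $5^5(1-x)^3x^4=y^5$. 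Eliminating $K^{4/5}$ and $(1-K)^{2/5}$ in favour of $\k'$ through this relation yields the alternative expression, and specializing to $y=-x$, where $\v=4^{1/5}x$ and $\alpha(\frac{x}{y})=\alpha(-1)=c$, gives $\vartheta/\xi=K=\k(c-4^{1/5}x)$.

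The ODE reduction and the substitutions are routine; the one genuinely delicate point is the choice of the projective coordinate $\tau=\frac{q}{q-p}$ rather than $p/q$ or $q/p$. This is what makes the separated differential coincide with the precise integrand $t^{-3/5}(t-1)^{-4/5}$ \emph{and} forces the initial value $\tau(0)$ to coincide with the upper limit $\frac{1}{1-x/y}$ of $\alpha$. Securing both the integrand and the base point simultaneously, and then tracking the several fifth-power branches consistently so that the final expression is single-valued along each orbit, is where the care is required.
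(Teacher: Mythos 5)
Your derivation is correct, but it takes a genuinely different route from the paper. You integrate the characteristic system $\dot p=\varpi(p,q)$, $\dot q=\varrho(p,q)$ directly in the original coordinates: the first integral $p(p-q)^2q^2$ together with the projective coordinate $\tau=q/(q-p)$ separates the variables at once, and the abelian integral $\alpha$ simply appears as the antiderivative of $\tfrac15 t^{-3/5}(t-1)^{-4/5}$ with the base point matching $\tau(0)=\frac{1}{1-x/y}$. (I checked the key computations: $q\dot p-p\dot q=5pq(p-q)$, $q=\v\,\tau^{3/5}(\tau-1)^{-1/5}$, and the back-substitution $p(1)=Kq(1)$ all hold, and your remark that $(1-K)^{2/5}$ versus $(K-1)^{2/5}$ is only a choice of fifth-root branch is fair, since the paper's own formula carries the same ambiguity and is validated there by power-series checks.) The paper instead first conjugates by the $1$-homogenic map $\ell(x,y)=xY(\frac{x}{y})\bl yY(\frac{x}{y})$, where $Y={}_2F_1(\frac35,1;\frac65;x)$ solves the auxiliary ODE (\ref{33}); this normalizes the second component of the vector field to $y^2$, so the conjugated system integrates with $q(t)=t^{-1}$, the function $\k(t)=p(t)t$ is defined implicitly by $\k(\k-1)^2Y^5(\k)=t^5$, and only then is $\alpha=\k^{-1}$ identified with the abelian integral via the integral representation (\ref{invv}) of $Y$; finally one conjugates back through $\ell$. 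Your approach is shorter and makes the provenance of the integrand and of the base point of $\alpha$ completely transparent; what it does not deliver is the hypergeometric function $Y$ itself, which the paper needs independently (for Proposition \ref{zero-loc} on the zero locus of $\vartheta$, and as the prototype of the $P_{n,Q}$ machinery behind Theorem \ref{thm3}), nor the uniform template (\ref{general}) that the paper reuses verbatim for the error-function flow of Section \ref{sect-int}. The remaining assertions (the relation $(\k')^5=5^5\k^4(1-\k)^3$, the alternative expression, and the specialization $y=-x$) you obtain exactly as the paper does.
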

How we should understand this solution as a flow, at least for real $x,y$? What we need in fact is to uniquely define the values for $z^{-1}\vartheta(xz,yz)$ and $z^{-1}\xi(xz,yz)$ for $z$ small enough, subject to a condition that at $z=0$ these branches are equal to $x$ and $y$, respectivelly. Fix $x,y$, put $(x,y)\mapsto(xz,yz)$, and vary only $z$. Choose one of the possible values of $\alpha(\frac{x}{y})$. Then $\v=zx^{1/5}(x-y)^{2/5}y^{2/5}$, and for $z$ small enough, choose a branch of $\k(\alpha(\frac{x}{y})-zx^{1/5}(x-y)^{2/5}y^{2/5})$ which is equal to $\frac{x}{y}$ at $z=0$.\\

The following is an example of an abelian flow of type II.
\begin{prop}
The flow $q(\m{x})$, generated by the vector field
\begin{eqnarray*} 
\varpi(x,y)\bl\varrho(x,y)=\frac{x^4}{y^2}+xy\bl 0. 
\end{eqnarray*}
can be given the analytic expression
\begin{eqnarray*}
q(\m{x})=\frac{xye^{y}}{(x^3+y^3-x^{3}e^{3y})^{1/3}}\bl y.
\end{eqnarray*}
The orbits of this flow are lines $y=\mathrm{const.}$
\end{prop}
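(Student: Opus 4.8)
The plan is to use that $\varrho\equiv 0$ to reduce the whole problem, on each orbit, to an ordinary one-parameter flow, and then to obtain that flow by solving a single scalar ODE. First I would record the orbit structure: with $\varrho=0$ the orbit equation (\ref{orbits}) becomes $\mathscr{W}_{x}\cdot y\varpi=0$, so $\mathscr{W}=\mathscr{W}(y)$ and the orbits are the lines $y=\mathrm{const.}$ This places $q$ in case II of Theorem \ref{thm2} with $\ell=\mathrm{id}$ and second coordinate $v(x,y)=y$, and the second coordinates of both sides of (\ref{funk}) are automatically equal to $y$. Thus everything reduces to the first coordinate. Fixing $x,y$ and writing $g(z)=\frac{u(xz,yz)}{z}$, the claim is $g(z)=T_{z}(x)$, where I set
$$T_{z}(X)=\frac{Xy\,e^{yz}}{\big(X^{3}+y^{3}-X^{3}e^{3yz}\big)^{1/3}}.$$

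Next I would show that on each line (\ref{funk}) collapses to the \emph{ordinary} translation equation. Using $u(xs,ys)=s\,T_{s}(x)$ one gets $\phi(\m{x}z)\frac{w}{z}=\big(w\,T_{z}(x),wy\big)$, whence the left-hand side of (\ref{funk}) has first coordinate $T_{z+w}(x)$ and the right-hand side has first coordinate $T_{w}\big(T_{z}(x)\big)$. Hence the projective translation equation here is exactly $T_{z+w}=T_{w}\circ T_{z}$ with $T_{0}=\mathrm{id}$ and $y$ a passive parameter; moreover $T_{0}=\mathrm{id}$ is precisely the boundary condition (\ref{bound}), and is immediate from $\big(X^{3}+y^{3}-X^{3}\big)^{1/3}=y$. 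In particular $\{T_{z}\}$ is a genuine one-parameter flow, so it is the solution flow of the autonomous field $X\mapsto\frac{\d}{\d z}T_{z}(X)\big|_{z=0}$.

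I would then produce $T_{z}$ constructively from the posited vector field. On the line $y=\mathrm{const.}$ the first component of the claimed field is $\varpi(X,y)=\frac{X^{4}}{y^{2}}+Xy$, so I integrate $\dot X=\frac{X^{4}}{y^{2}}+Xy$, $X(0)=x$. This is a Bernoulli equation; the substitution $w=X^{-3}$ linearizes it to $\dot w+3yw=-3y^{-2}$, and integrating gives
$$X(z)^{3}=\frac{x^{3}y^{3}e^{3yz}}{x^{3}+y^{3}-x^{3}e^{3yz}},\qquad\text{i.e.}\qquad X(z)=T_{z}(x).$$
Setting $z=1$ returns $u(x,y)=T_{1}(x)$, the stated formula. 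This one computation delivers everything at once: the closed form, the cocycle property $T_{z+w}=T_{w}\circ T_{z}$ (it is the flow of an autonomous field), the boundary value $T_{0}=\mathrm{id}$, and — by construction — the vector field $\frac{x^{4}}{y^{2}}+xy\bl 0$. If one prefers to verify rather than derive, the cocycle property follows from the single cancellation
$$T_{z}(X)^{3}+y^{3}-T_{z}(X)^{3}e^{3yw}=\frac{y^{3}}{P}\big(X^{3}+y^{3}-X^{3}e^{3y(z+w)}\big),\quad P=X^{3}+y^{3}-X^{3}e^{3yz},$$
which collapses the nested radicals into $T_{z+w}$.

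The one genuine subtlety, rather than the (routine) bookkeeping, is the choice of branch of the cube root: both the formula and the identity $T_{z+w}=T_{w}\circ T_{z}$ hold only for a single-valued branch along the orbit. The boundary condition pins this branch down through $(y^{3})^{1/3}=y$ and propagates it analytically for small $z$, exactly as in the convention described after Proposition \ref{prop-abel}; this is also why, in this ramified regime, one must argue with the local form (\ref{funk}) rather than with a global single-valued flow. I therefore expect no serious obstacle beyond this branch bookkeeping, the solution of the linear ODE and the final cube-root cancellation being entirely routine.
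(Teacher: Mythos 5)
Your proof is correct and follows essentially the same route as the paper: for a type II flow everything reduces to integrating the scalar ODE $\dot X=\varpi(X,y)$ along each line $y=\mathrm{const.}$, which the paper does via the quadrature formula (\ref{inte-II}) (the substitution $t=y/X$ turns your Bernoulli equation into exactly that integral) and you do via $w=X^{-3}$; both yield the stated closed form. The cube-root branch bookkeeping you flag is the same convention the paper adopts after Proposition \ref{prop-abel}, so there is nothing to add.
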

We can also integrate the vector field
\begin{eqnarray*} 
\varpi(x,y)\bl\varrho(x,y)=\frac{x^3}{y}\bl 0, 
\end{eqnarray*}
to obtain an algebraic flow
\begin{eqnarray*}
r(\m{x})=\frac{x}{\sqrt{1-\frac{2x^{2}}{y}}}\bl y.
\end{eqnarray*}
However, the numerator of $\varpi(x,y)$ has a multiple root, and therefore the vector field is not reduced (see Subsection \ref{sub-alg-II}).\\

And finally, we present an integral flow which is non-abelian.
\begin{prop}
The flow $\Xi(\m{x})=G(x,y)\bl H(x,y)$ generated by the vector field $(x^2+xy+y^2)\bl(xy+y^2)$ is non-abelian integral flow and is given by the expression
\begin{eqnarray*}
G(x,y)\bl H(x,y)=\psi\v\exp\Big{(}\psi+\frac{\psi^{2}}{2}\Big{)}\bl \v\exp\Big{(}\psi+\frac{\psi^{2}}{2}\Big{)};\\
\text{ here } 
\psi=\l\Big{(}\beta\Big{(}\frac{x}{y}\Big{)}-\v\Big{)},
\v=\exp\Bigg{(}-\frac{x}{y}-\frac{x^2}{2y^2}\Bigg{)}y,
\end{eqnarray*}
and where $\beta(x)$ is the error function
\begin{eqnarray*}
\beta(x)=-\frac{\sqrt{\pi e}}{\sqrt{2}}\mathrm{erf}\Bigg{(}\frac{x+1}{\sqrt{2}}\Bigg{)},\quad
\mathrm{erf}(x)=\frac{2}{\sqrt{\pi}}\int\limits_{0}^{x}e^{-t^2}\d t,
\end{eqnarray*} 
and $\l$ is the inverse of $\beta$. The orbits of this flow are transcendental curves $\v=\mathrm{const.}$
In the special case,
\begin{eqnarray*}
\frac{G(x,-x)}{x}=-\l(xe^{1/2})\exp\Big{(}\frac{1}{2}\big{(}\l(xe^{1/2})+1\big{)}^2\Big{)}.
\end{eqnarray*}
\label{prop-int}
\end{prop}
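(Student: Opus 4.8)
The plan is to integrate the flow directly from its defining ODE and then recognise the resulting quadrature as the error function. Recall the standard reduction, valid for any smooth flow: writing $W(z)=\phi(\m{x}z)/z=p(z)\bl q(z)$, the boundary condition (\ref{bound}) gives $W(0)=\m{x}$, while differentiating (\ref{funk}) in $w$ at $w=0$ and using the $2$-homogeneity of the vector field yields the autonomous system $W'(z)=\mathbf{v}(W(z))$, with $\phi(\m{x})=W(1)$. For our vector field this reads
\begin{eqnarray*}
p'=p^{2}+pq+q^{2},\qquad q'=q(p+q),\qquad p(0)=x,\ q(0)=y,
\end{eqnarray*}
so it suffices to solve this system and evaluate at $z=1$.

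First I would pass to the slope coordinate $r=p/q$. The identity $q\varpi(p,q)-p\varrho(p,q)=q^{3}$ (equivalently $y\varpi-x\varrho=y^{3}$, which is exactly the relation $x\varrho(x)-\varpi(x)=-1$ sitting behind (\ref{33})) collapses the system to $r'=(p'q-pq')/q^{2}=q$. Next I would verify that $\v=\v(p,q)=q\exp(-r-r^{2}/2)$ is a first integral: a one-line computation gives $\frac{\d}{\d z}\v(p,q)=0$, which is precisely the orbit equation (\ref{orbits}) with $N=1$; hence $\v(p,q)\equiv\v(\m{x})=:\v_{0}$ along the trajectory and the orbits are the curves $\v=\mathrm{const}$. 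Solving $q=\v_{0}\exp(r+r^{2}/2)$ and inserting into $r'=q$ reduces the whole problem to the single separable equation $r'=\v_{0}\exp(r+r^{2}/2)$, i.e. $\exp(-r-r^{2}/2)\,\d r=\v_{0}\,\d z$.

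The quadrature is the crux. Completing the square, $-r-\tfrac{r^{2}}{2}=\tfrac12-\tfrac12(r+1)^{2}$, so $\int\exp(-r-r^{2}/2)\,\d r=-\beta(r)$ with $\beta$ as stated (equivalently $\beta'(r)=-\exp(-r-r^{2}/2)$). Integrating from $0$ to $z$ gives $\beta(r(z))=\beta(x/y)-\v_{0}z$, hence at $z=1$
\begin{eqnarray*}
\psi:=r(1)=\l\Big{(}\beta\big{(}\tfrac{x}{y}\big{)}-\v_{0}\Big{)},\qquad \l=\beta^{-1}.
\end{eqnarray*}
Back-substituting $q(1)=\v_{0}\exp(\psi+\psi^{2}/2)$ and $p(1)=\psi\,q(1)$ produces exactly the claimed $G\bl H$, once one notes $\v_{0}=\v(\m{x})$. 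The remaining assertions are short. The flow is \emph{integral} because its vector field $x^{2}+xy+y^{2}\bl xy+y^{2}$ is a quadratic form with integer coefficients. It is \emph{non-abelian} because the orbit, parametrised by $r\mapsto\big{(}r\v_{0}e^{r+r^{2}/2},\,\v_{0}e^{r+r^{2}/2}\big{)}$, lies on $\v=\mathrm{const}$; since $e^{r+r^{2}/2}$ is transcendental over $\mathbb{C}(r)$, no nonzero polynomial vanishes on this image, so the orbits are not algebraic. For the special value $y=-x$ one has $\beta(-1)=0$ and $\v_{0}=-x\,e^{1/2}$, whence $\psi=\l(xe^{1/2})$; using $\tfrac12+\psi+\tfrac{\psi^{2}}{2}=\tfrac12(\psi+1)^{2}$ reproduces the stated formula for $G(x,-x)/x$.

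I expect the genuine obstacle to be not the algebra but the \emph{analytic bookkeeping}. The quadrature determines $r(z)$ only up to the choice of branch of the multivalued inverse $\l=\beta^{-1}$ and of $\beta(x/y)$, so one must specify, exactly as in the discussion following Proposition \ref{prop-abel}, the branches of $\l\big{(}\beta(x/y)-\v_{0}z\big{)}$ and of the outer exponentials for which $W(0)=\m{x}$ holds, and then confirm that with these choices the constructed $\phi$ genuinely satisfies (\ref{funk}) and (\ref{bound}) and not merely the linearised ODE. Everything else is a routine verification.
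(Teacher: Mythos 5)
Your proposal is correct, and it reaches the stated formulas by a more direct route than the paper. The paper conjugates the flow by the $1$-BIR $xA(x,y)\bl yA(x,y)$ obtained from (\ref{33}) (whose solution is the entire function $Z$), so that the second component of the vector field becomes $y^{2}$, then solves the system (\ref{sys}) with $q(t)=t^{-1}$, defines $\l$ implicitly by (\ref{inverse2}), and finally invokes the general conjugation formula (\ref{general}); the result is then checked against the power series (\ref{g-expan}) generated by the recursion. You instead integrate the characteristic system $W'(z)=\mathbf{v}(W(z))$, $W(0)=\m{x}$, in the original coordinates, using the slope $r=p/q$ together with the relation $q\varpi-p\varrho=q^{3}$ and the first integral $\v=q\exp(-r-r^{2}/2)$ to collapse everything to the single separable equation $e^{-r-r^{2}/2}\,\d r=\v_{0}\,\d z$. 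The crucial quadrature --- completing the square to recognize $\mathrm{erf}$ --- is identical in both treatments, so the two arguments are analytically the same at their core; what your version buys is self-containedness (no appeal to the machinery of (\ref{33}) and (\ref{general}) from the earlier sections, and no need for the power-series cross-check), while the paper's version buys uniformity, since the same template handles the abelian flow of Proposition \ref{prop-abel} and the algebraic flows verbatim. Your direct transcendence argument for non-abelianness (using that $e^{r+r^{2}/2}$ is transcendental over $\mathbb{C}(r)$) is also sound and replaces the paper's appeal to the triple root of $y\varpi-x\varrho=y^{3}$. Your closing caveat about branch choices for $\l=\beta^{-1}$ is well placed: the paper is equally informal on this point, and its own Note shows that the initial-condition bookkeeping in this circle of arguments genuinely requires care.
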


Non-integral and non-abelian vector fields still produce valid locally well-defined flows. Indeed, the series (\ref{series}) makes sense and is locally uniquely defined for any pair of $2-$homogenic rational functions $\varpi\bl\varrho$. For example, the vector field $\sqrt{2}xy\bl(-y^{2})$ produces the flow
\begin{eqnarray*}
x(y+1)^{\sqrt{2}}\bl\frac{y}{y+1}.
\end{eqnarray*} 
Yet, the arithmetic of these flows is not in any sense finite, so these are not so interesting to investigate from a number-theoretic or algebro-geometric point of view, and they have no deeper structure.\\

In \cite{alkauskas-un} we posed 8 problems, all related to projective flows and the projective translation equation. Here we add two more to this list.

\begin{prob9}Let $n\geq 3$. Describe all $n$ dimensional projective flows over $\mathbb{C}$ with rational vector fields such that all their coordinates are given by algebraic functions.
\end{prob9}

\begin{prob10}Let $n\geq 3$. Describe all  $n$ dimensional projective flows over $\mathbb{C}$ with rational vector fields such that their orbits are algebraic curves.
\end{prob10}

This list is still continued in \cite{alkauskas-comm} with the \textbf{Problem 11}. The latter concerns projective flows $\phi$ and $\psi$ with rational vector fields such that $\phi\circ\psi=\psi\circ\phi$, which is equivalent to the property that the composition is a flow again. It appears that in a $2-$dimensional case this implies that both $\phi$ and $\psi$ are algebraic flows with very special properties.

\section{General setting}

In the most general case, if we do not encounter an obstruction \cite{alkauskas}, the vector field, with the help of conjugation with a $1-$BIR, can be transformed into a pair of quadratic forms $\varpi(x,y)\bl\varrho(x,y)$. The case when $y\varpi-x\varrho$ has a double or triple root is dealt in Section \ref{sect-int}. If there is no triple root, we can further reduce the vector field to \cite{alkauskas}
\begin{eqnarray*}
\varpi(x,y)\bl\varrho(x,y)=x^{2}+Bxy\bl Cxy+y^2.
\end{eqnarray*}
Assume that $B,C\neq 1$, otherwise $y\varpi-x\varrho$ has a double root. Consider conjugation with the linear change 
$(x,y)\mapsto(\frac{B-1}{C-1}x,y)$. Conjugating vector field with this, we obtain
\begin{eqnarray}
\varpi'(x,y)\bl\varrho'(x,y)=\frac{B-1}{C-1}x^{2}+Bxy\bl \frac{B-1}{C-1}Cxy+y^2.
\label{ovaa}
\end{eqnarray} 
Let $\phi(\m{x})$ be a flow with the vector field $\varpi'(x,y)\bl\varrho'(x,y)$. Note that the sum of the coefficients 
for both components is constant:
\begin{eqnarray*}
\frac{B-1}{C-1}+B=\frac{(B-1)C}{C-1}+1=\frac{BC-1}{C-1}.
\end{eqnarray*}
Thus, despite a huge arithmetic variety and complexity of such flows, on three exceptional lines $x=0$, $y=0$ and $x=y$ the flow can be given a simple closed-form expression.
\begin{prop}For such flow $\phi$, we have:
\begin{eqnarray*}
\phi(x,0)=\frac{x}{1-\frac{B-1}{C-1}x}\bl 0,\quad
\phi(0,x)=0\bl\frac{x}{1-x},\quad 
\phi(x,x)=\frac{x}{1-\frac{BC-1}{C-1}x}\bl
\frac{x}{1-\frac{BC-1}{C-1}x}.
\end{eqnarray*}
\label{prop-sing}
\end{prop}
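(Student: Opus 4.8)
The plan is to integrate the flow along each of the three exceptional lines separately, reducing in each case to a scalar projective flow that can be written down in closed form. The key tool is the \emph{flow ODE}. Fix the base point $\m{x}=x\bl y$ and set $\Psi(z)=\phi(\m{x}z)/z=p(z)\bl q(z)$, the parametrization of the orbit through $\m{x}$. Differentiating the functional equation (\ref{funk}) in $w$ at $w=0$, with $z$ held fixed, converts its right-hand side (which is $w^{-1}\phi(\Psi(z)w)$ once one writes $\phi(\m{x}z)=z\Psi(z)$) into the vector field evaluated at $\Psi(z)$, by the very definition (\ref{vec}); the left-hand side is $\Psi'(z)$. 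Hence $\Psi$ solves
\begin{eqnarray*}
p'(z)\bl q'(z)=\varpi'(p,q)\bl\varrho'(p,q),\qquad \Psi(0)=\m{x},\qquad \phi(\m{x})=\Psi(1),
\end{eqnarray*}
the initial condition coming from (\ref{bound}) and the evaluation $\phi(\m{x})=\Psi(1)$ from setting $z=1$. This is the analytic form of the locally unique power series (\ref{series}), so it determines $\phi$, and it suffices to run the system from the three initial points $x\bl 0$, $0\bl x$ and $x\bl x$.

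Next I would check that each of the three lines is invariant for this system. From (\ref{ovaa}) one has $q'=q\bigl(\tfrac{B-1}{C-1}Cp+q\bigr)$, so $q(0)=0$ forces $q\equiv0$; likewise $p'=p\bigl(\tfrac{B-1}{C-1}p+Bq\bigr)$, so $p(0)=0$ forces $p\equiv0$; and on the diagonal, the constancy of the coefficient sum recorded just before the statement gives $\varpi'(t,t)=\varrho'(t,t)=\tfrac{BC-1}{C-1}t^{2}$, whence $(p-q)'=0$ and $p(0)=q(0)$ forces $p\equiv q$. By uniqueness of the analytic solution these invariances are genuine, so on each line the two-dimensional flow degenerates to a one-dimensional one.

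On each line the surviving coordinate then obeys the scalar autonomous equation $t'=\kappa t^{2}$, $t(0)=x$, with $\kappa=\tfrac{B-1}{C-1}$ on $y=0$, with $\kappa=1$ on $x=0$, and with $\kappa=\tfrac{BC-1}{C-1}$ on the diagonal. Separating variables gives $-1/t(z)=\kappa z-1/x$, hence $t(1)=x/(1-\kappa x)$; substituting the three values of $\kappa$ produces precisely the claimed expressions for $\phi(x,0)$, $\phi(0,x)$ and $\phi(x,x)$. I expect the only delicate point to be the derivation of the flow ODE together with the identification $\phi(\m{x})=\Psi(1)$; everything after that is an invariance observation followed by an elementary separation of variables. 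Should one prefer to bypass the flow ODE entirely, the same scalar conclusion $t(1)=x/(1-\kappa x)$ follows directly from the one-dimensional instance of the PDE (\ref{g-parteq}), namely $u'(x)(\kappa x^{2}-x)=-u(x)$ with the boundary condition inherited from (\ref{bound}).
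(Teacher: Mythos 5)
Your proof is correct and follows essentially the route the paper intends: the paper states this proposition without a detailed proof, justifying it only by the preceding remark that the coefficient sums of $\varpi'$ and $\varrho'$ both equal $\frac{BC-1}{C-1}$, so that on the three invariant lines the flow degenerates to the one-dimensional projective flow $t\mapsto x/(1-\kappa x)$ with vector field $\kappa t^2$. Your derivation of the flow ODE from (\ref{funk}), the invariance check, and the separation of variables are exactly the details being elided, and they are all sound.
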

Next,
\begin{eqnarray*}
\varpi'(x,1)-x\varrho'(x,1)=(1-B)x(x-1).
\end{eqnarray*}
And
\begin{eqnarray*}
-\frac{\varrho'(x,1)}{\varpi'(x,1)-x\varrho'(x,1)}=
\frac{1}{(1-B)x}+\frac{BC-1}{(B-1)(C-1)(x-1)}.
\end{eqnarray*}
So, the orbits of the flow with the vector field $\varpi'(x,y)\bl\varrho'(x,y)$ are given by (in (\ref{orbits}) we take $N=1$)
\begin{eqnarray}
x^{1-C}(x-y)^{BC-1}y^{1-B}\equiv\mathscr{W}^{(1-C)(1-B)}(x,y)=\mathrm{const}.
\label{orbitos}
\end{eqnarray} 
So, we obtain the following. The flows with the vector fields $(x^2+\mathbf{b}xy,\mathbf{c}xy+y^2)$ are linearly conjugate if $(\mathbf{b},\mathbf{c})$ is any pair from an unordered triple \cite{alkauskas}
\begin{eqnarray*}
B,C,\frac{B+C-2}{BC-1}.
\end{eqnarray*}
We may put this into a symmetric form. Denote all the three different numbers $B$, $C$ and 
$\frac{B+C-2}{BC-1}$ in the above display by $\mathbf{a}$, $\mathbf{b}$ and $\mathbf{c}$, in any order. Then
\begin{eqnarray}
\frac{1}{1-\mathbf{a}}+\frac{1}{1-\mathbf{b}}+\frac{1}{1-\mathbf{c}}=1.
\label{cyk}
\end{eqnarray}
And the level of such flow is $N=\alpha+\beta+\gamma$, where
\begin{eqnarray*}
(1-C:BC-1:1-B)=(\alpha:\beta:\gamma),\quad\alpha,\beta,\gamma\in\mathbb{Z},\quad\mathrm{g.c.d}(\alpha,\beta,\gamma)=1.
\end{eqnarray*}
\section{Algebraic flows}
\subsection{Example 1. }The canonical solution $\phi_{N}(x,y)$ (see Theorem \ref{thm1}) is an algebraic flow if $N=\frac{p}{q}$, $p,q\in\mathbb{Z}$ $(p,q)=1$. The orbits of this flow are given by genus $0$ curves $x^{q}y^{p}\equiv\textrm{const.}$ But we will soon see that these examples do not exhaust all algebraic flows!
 
\subsection{Example 2. }We will now construct algebraic flow which is non-conjugate to any canonical solution. Indeed, now we explore the case $(B,C)=(3,\frac{1}{2})$; So, looking at (\ref{ovaa}) we see that  we need to explore the vector field
\begin{eqnarray*}
(-4x^2+3xy)\bl(-2xy+y^2). 
\end{eqnarray*}
This is an example of the flow with second degree of ramification, whose orbits are genus $0$ quartic curves $u^{-1}(u-v)^{-1}v^{4}=\mathrm{const.}$, as is implied by (\ref{orbitos}).

\begin{figure}
\epsfig{file=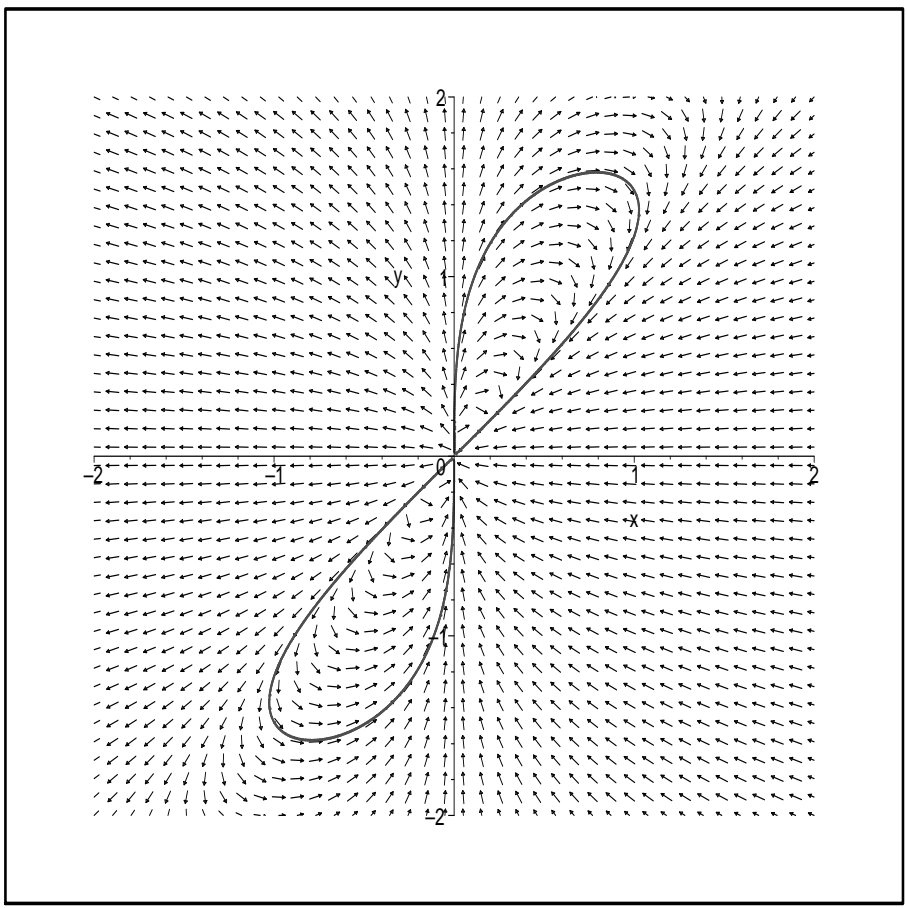,width=340pt,height=340pt,angle=-90}
\caption{The normalized vector field of the algebraic flow
$\Phi(\m{x})$, $(x,y)\in[-2,2]^{2}$. The orbit shown is 
$x(x-y)=-0.1\cdot y^{4}$.}
\label{figure2}
\end{figure}

Let us apply (\cite{alkauskas}, Proposition 2). We have (for typographic reasons, now we discard the prime sign for both $\varpi$ and $\varrho$):
\begin{eqnarray*}
\frac{y\varpi_{y}(x,y)-x\varrho_{y}(x,y)}{y\varpi_{x}(x,y)-x\varrho_{x}(x,y)}=
-\frac{x(y+2x)}{3y(2x-y)},
\end{eqnarray*}
and since the latter is not a M\"{o}bius transformation, $\Phi(\m{x})=U(x,y)\bl V(x,y)$ is not a rational flow.\\

But we have seen in (\cite{alkauskas}, Section 2.4) that the vector field of the rational flow
\begin{eqnarray}
\phi^{(2)}_{3}(x,y)=\frac{(y^2+x)^3}{(x+2xy+y^3)^2}\bl\frac{y(y^2+x)}{(x+2xy+y^3)}=u(x,y)\bl v(x,y)
\label{phi2-3}
\end{eqnarray}
is given by
\begin{eqnarray*}
\tilde{\varpi}(x,y)\bl\tilde{\varrho}(x,y)=(-4xy+3y^2)\bl(-2xy^2+y^3)x^{-1},
\end{eqnarray*}
and since
\begin{eqnarray*}
\frac{\varpi(x,y)}{\varrho(x,y)}=\frac{\tilde{\varpi}(x,y)}{\tilde{\varrho}(x,y)},
\end{eqnarray*}
the flows $\Phi$ and $\phi^{(2)}_{3}$ share the same orbits!  For example, as we know,
\begin{eqnarray*}
\frac{V(xz,yz)^4}{z^{2}\cdot U(xz,yz)\cdot\big{(}U(xz,yz)-V(xz,yz)\big{)}}\equiv \frac{y^4}{x(x-y)}.
\end{eqnarray*}
Nevertheless, since the flow $\Phi$ is linearly conjugate to the flow with orbits $x^2+Bxy\bl Cxy+y^2$, we know in advance that the flow $\Phi(\m{x})$ has a quadratic ramification \cite{alkauskas-un}. Now we will find the flow $\Phi$ explicitly.\\
 
Let, as before, $\varpi(x,y)\bl\varrho(x,y)=(-4x^2+3xy)\bl(-2xy+y^2)$. Solving the differential equation (\ref{33}) we obtain that the $1$-BIR $xA(x,y)\bl yA(x,y)$ transforms the flow $\Phi$ into an uni-variate flow \cite{alkauskas}, where $A(x,y)=\frac{2x}{y}-1$. So (\cite{alkauskas}, Proposition 3)
\begin{eqnarray*}
A(x,y)\varpi(x,y)-A_{y}\Big{(}x\varrho(x,y)-y\varpi(x,y)\Big{)}&=&\frac{-4x^3+6x^{2}y-3xy^{2}}{y},\\
A(x,y)\varrho(x,y)+A_{x}\Big{(}x\varrho(x,y)-y\varpi(x,y)\Big{)}&=&-y^2.
\end{eqnarray*}
So, if we put 
\begin{eqnarray*}
\ell(x,y)=xA(x,y)\bl yA(x,y)=\frac{2x^2-xy}{y}\bl 2x-y,\quad \ell^{-1}(x,y)=\frac{xy}{2x-y}\bl \frac{y^2}{2x-y},
\end{eqnarray*}
then the flow $\ell^{-1}\circ\Phi\circ\ell(x,y)$
has the vector field
\begin{eqnarray*}
\frac{-4x^3+6x^{2}y-3xy^{2}}{y}\bl(-y^2)=\eta(x,y)\bl \nu(x,y).
\end{eqnarray*}
The orbits of this vector field are given by
\begin{eqnarray*}
\mathscr{W}_{\mathbf{a}}(x,y):=\frac{(4x^2-4xy+y^2)y^{2}}{x(x-y)}=\mathrm{const.}
\end{eqnarray*}
As the method of integration, let us consider the following system of differential equations:
\begin{eqnarray}
\left\{\begin{array}{l}
p'(t)=-\eta(p(t),q(t)),\\
q'(t)=-\nu(p(t),q(t)),\\
\mathscr{W}_{\mathbf{a}}(p(t),q(t))=1.
\end{array}
\right.
\label{sys}
\end{eqnarray}
In particular, since $\nu(x,y)=-y^{2}$, we put $q(t)=-t^{-1}$. So, the last identity gives
\begin{eqnarray*}
p^2(4t^2-t^4)+p(4t-t^3)+1=0.
\end{eqnarray*}
Solving, we get
\begin{eqnarray*}
p(t)=-\frac{1}{2t}-\frac{1}{2\sqrt{t^{2}-4}}.
\end{eqnarray*}
We check directly that the first identity of (\ref{sys}) is satisfied (which is automatic in a general: the second and the third yield the first). So, applying the general method to solve the PDE (\ref{g-parteq}), developed in \cite{alkauskas-un}, we have
\begin{eqnarray*}
u\Big{(}p(a)\v,q(a)\v\Big{)}=p(a-\v)\v,&\quad& v\Big{(}p(a)\v,q(a)\v\Big{)}=q(a-\v)\v,\\ 
x=p(a)\v,&\quad& y=q(a)\v.
\end{eqnarray*}
These equations imply, for example, that
\begin{eqnarray*}
W_{\mathbf{a}}(x,y)=W_{\mathbf{a}}\Big{(}p(a)\v,q(a)\v\Big{)}=\v^{2}.
\end{eqnarray*}
So, the identity for $v$ is what is expected:
\begin{eqnarray*}
v\Big{(}p(a)\v,-\frac{\v}{a}\Big{)}=\frac{\v}{\v-a}\Longrightarrow v(x,y)=\frac{y}{y+1}.
\end{eqnarray*}
The identity for $u$ is more complicated:
\begin{eqnarray*}
u\Big{(}-\frac{\v}{2a}-\frac{\v}{2\sqrt{a^2-4}},-\frac{\v}{a}\Big{)}=\frac{\v}{2(\v-a)}-\frac{\v}{2\sqrt{(a-\v)^2-4}}.
\end{eqnarray*}
If we replace $\v=-ay$, we obtain:
\begin{eqnarray}
u\Big{(}\frac{y}{2}+\frac{ay}{2\sqrt{a^2-4}},y\Big{)}=\frac{y}{2(y+1)}+\frac{ay}{2\sqrt{(a+ay)^2-4}}.
\label{L}
\end{eqnarray}
Now, 
\begin{eqnarray*}
\frac{y}{2}+\frac{ay}{2\sqrt{a^2-4}}=x\Longrightarrow \frac{a}{2\sqrt{a^{2}-4}}=\frac{x}{y}-\frac{1}{2}
\Longrightarrow a=\frac{2x-y}{\sqrt{x^2-xy}}.
\end{eqnarray*}
So, finally substituting this into (\ref{L}), we obtain

\begin{eqnarray*}
u(x,y)=\frac{y}{2y+2}+\frac{2xy-y^2}{2\sqrt{4xy(x-y)(y+2)+y^2(y+1)^2}}.
\end{eqnarray*}

Returning to the vector field $(-4x^2+3xy)\bl(-2xy+y^2)$, we see that its  flow is given by $\ell\circ(u\bl v)\circ\ell^{-1}(x,y)$. Calculating with MAPLE again, we obtain Proposition \ref{prop-alg}. The closed-form expression for $\Phi$ has no immediate resemblance to $\phi^{(2)}_{3}$ as is given by (\ref{phi2-3}), so it is indeed surprising that these two share the same orbits! The double-check with MAPLE shows that
\begin{eqnarray*}
U_{x}(x,y)\cdot(-4x^2+3xy-x)+U_{y}(x,y)\cdot(-2xy+y^2-y)+U(x,y)\equiv 0,
\end{eqnarray*}
identicaly so for $V$, and the boundary conditions are satisfied. This is a completely new algebraic flow, not conjugate to any canonical flow $\phi_{N}$, $N\in\mathbb{Q}$. We can also double-check the validity of Theorem \ref{thm3} and thus triple-check this example in particular.\\

Indeed, in this case 
\begin{eqnarray*}
P_{1,-2}(x)=1-2x.
\end{eqnarray*}
So, first we check that
\begin{eqnarray}
\frac{1}{V}-\frac{2U}{V^2}=\frac{1}{y}-\frac{2x}{y^2}-1,
\label{nQ}
\end{eqnarray}
which does indeed hold, and also, with MAPLE, the orbit conservation property
\begin{eqnarray*}
U^{-1}(U-V)^{-1}V^4=x^{-1}(x-y)^{-1}y^4.
\end{eqnarray*}
On the other side, the latter property together with (\ref{nQ}), if solved explicitly, gives exactly the values of $U$ and $V$, given in Proposition \ref{prop-alg}. 

\subsection{Example 3. }We note that yet another algebraic flow appeared in the very first paper on projective translation equation \cite{alkauskas-t}. Namely, consider the canonical flow $\phi(\m{x})=\frac{x}{x+1}\bl\frac{y}{y+1}$. Now, consider the 1-homogenic transformation
\begin{eqnarray*}
\ell(x,y)=\frac{x^3}{x^2+y^2}\bl\frac{y^3}{x^2+y^2}.
\end{eqnarray*}
This maps the unit circle to the astroid $|x|^{2/3}+|y|^{2/3}=1$. The inverse of this map is not rational and it is given by
\begin{eqnarray*}
\ell^{-1}(x,y)=x+(xy^2)^{1/3}\bl y+(x^2y)^{1/3}.
\end{eqnarray*}
We now calculate that the flow $\ell^{-1}\circ\phi\circ\ell(\m{x})$ is given by
$\chi(x,y)\bl \chi(y,x)$, where
\begin{eqnarray*}
\chi(x,y)=\frac{x^3}{x^3+x^2+y^2}+\frac{xy^2}{(x^3+x^2+y^2)^{1/3}(y^3+x^2+y^2)^{2/3}}.
\end{eqnarray*} 
Of course, $\ell$ is not $1-$BIR. However, the vector field of this flow is given by
\begin{eqnarray*}
-\frac{x(3x^5+x^3y^2+2y^5)}{3(x^2+y^2)^2}\bl-\frac{y(3y^5+y^3x^2+2x^5)}{3(x^2+y^2)^2};
\end{eqnarray*}
so, it is rational again! The orbits of this flow are given by genus $0$ curves
\begin{eqnarray*}
\mathscr{W}(u,v)=\frac{u^3v^{3}}{(u-v)(u^2+v^2)(u^2+uv+v^2)}=\textrm{const.}
\end{eqnarray*}


\section{Elliptic unramified and ramified flows}
\label{ell-un}

In \cite{alkauskas-un} we explored the case $(B,C)=(-2,-2)$. So, this gives the vector field
\begin{eqnarray*}
(x^2-2xy)\bl(-2xy+y^2),
\end{eqnarray*}
which is exceptional among all vector fields given by a pair of quadratic forms since this vector field is invariant under two linear involutions, thus producing a $S_{3}$ group as the group of its symmetries. This is an example of the flow without ramifications, and whose orbits are genus $1$ cubic curves $u(u-v)v\equiv\mathrm{const.}$ Since, as said, this was treated in detail in \cite{alkauskas-un}, we only have reproduced the main result as Proposition \ref{prop-ell}.\\

\begin{figure}
\epsfig{file=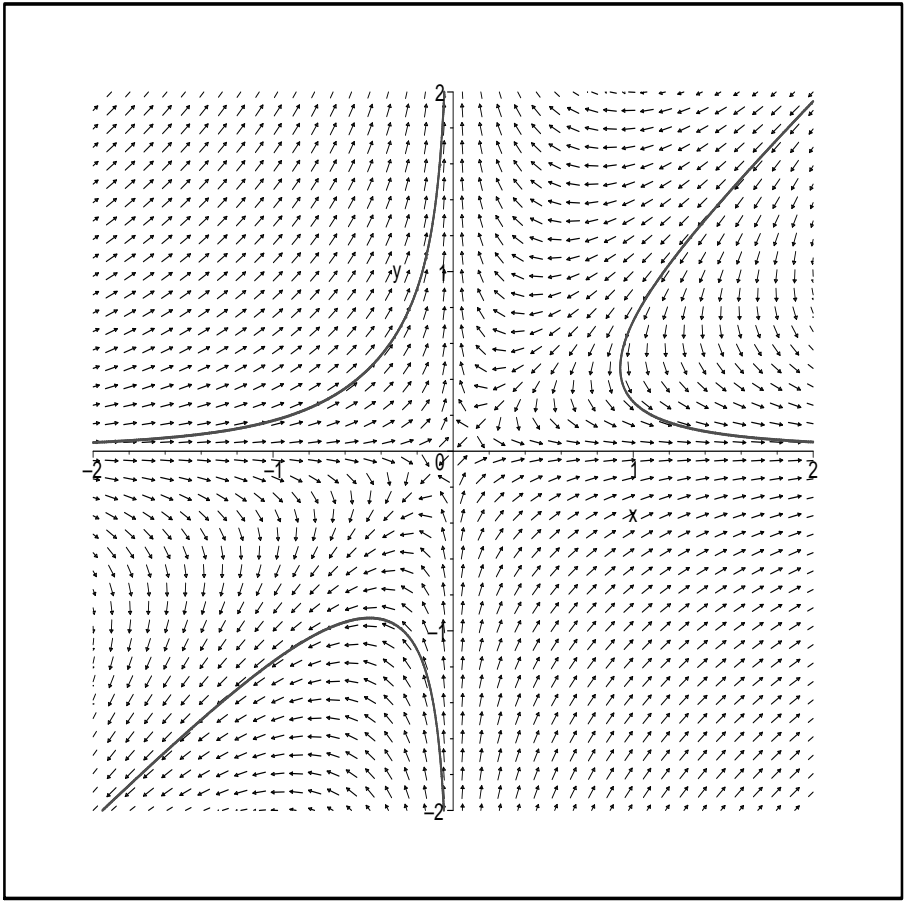,width=340pt,height=340pt,angle=-90}
\caption{The normalized vector field of the elliptic unramified flow $\Lambda(\m{x})$, $(x,y)\in[-1,1]^{2}$. The orbit shown is $xy(x-y)=0.2$.}
\label{figure3}
\end{figure}

Next, we can similarly explore the case $(B,C)=(5,-\frac{1}{3})$ and the vector field (\ref{ovaa}) 
\begin{eqnarray*}
\varpi(x,y)\bl\varrho(x,y)=(-3x^2+5xy)\bl(xy+y^2).
\end{eqnarray*}
This is an example of the flow with ramification whose orbits are genus $1$ quintic curves  $u^{-1}(u-v)^{2}v^{3}\equiv\mathrm{const.}$; see (\ref{orbitos}).

\begin{figure}
\epsfig{file=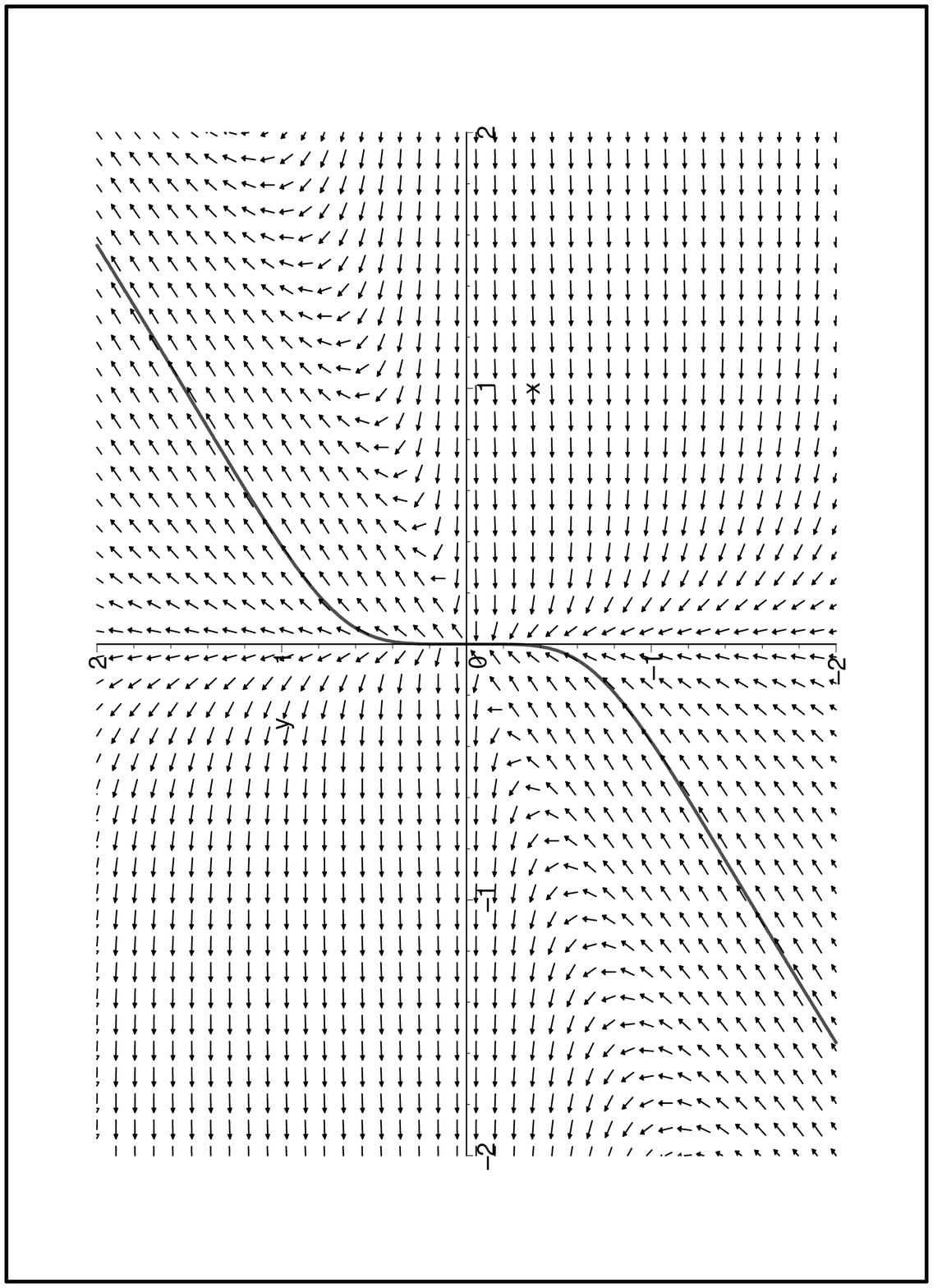,width=340pt,height=340pt,angle=-90}
\caption{The normalized vector field of the elliptic ramified flow $\Gamma(\m{x})$, $(x,y)\in[-2,2]^{2}$. The orbit shown is 
$(x-y)^2y^3=x$.}
\label{figure4}
\end{figure}

\begin{prop}
The flow $\Gamma(\m{x})$ generated by the vector field $(-3x^2+5xy)\bl(xy+y^2)$ is elliptic. The orbits of this flow are genus $1$ curves $x^{-1}(x-y)^{2}y^{3}=\mathrm{const.}$
\end{prop}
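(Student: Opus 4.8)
The plan is to separate the statement into the orbit computation, which is immediate from the general theory, and the ellipticity assertion, which rests on a genus computation. For the orbits I would invoke (\ref{orbitos}) directly. Since the vector field $(-3x^2+5xy)\bl(xy+y^2)$ is precisely the reduced form (\ref{ovaa}) for $(B,C)=(5,-\frac{1}{3})$, as already observed, I substitute $1-C=\frac{4}{3}$, $BC-1=-\frac{8}{3}$, $1-B=-4$ into (\ref{orbitos}) to get the first integral $x^{4/3}(x-y)^{-8/3}y^{-4}=\mathrm{const}$. Raising to the power $-\frac{3}{4}$ clears the fractional exponents and produces the stated orbit equation $x^{-1}(x-y)^{2}y^{3}=\mathrm{const}$, which becomes the quintic $(x-y)^2y^3=cx$ after multiplying by $x$.

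The crux is to prove that these quintics have genus $1$, since this is exactly what makes the flow elliptic. Rather than analyze the homogenized plane curve $(u-v)^2v^3=cuw^4$ directly (it carries a $(3,4)$-cusp at $[1:0:0]$ and a tacnode at $[1:1:0]$, so one would have to compute non-ordinary $\delta$-invariants and check affine smoothness), I would present the orbit as a cyclic cover. Using $0$-homogeneity and setting $t=x/y$, the equation becomes $y^{4}=c\,t(t-1)^{-2}$, a degree-$4$ cyclic cover of the $t$-line. The function $g(t)=c\,t(t-1)^{-2}$ has divisor $(0)+(\infty)-2(1)$; because $v_0(g)=1$ is coprime to $4$ the cover is connected, and the only branch points are $t=0,1,\infty$. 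Over $t=0$ and $t=\infty$ there is a single totally ramified point each ($e=4$), while over $t=1$, where $v_1(g)=-2$ shares the factor $2$ with $4$, the fibre splits into two points with $e=2$. Riemann--Hurwitz then gives $2g-2=4(-2)+(3+2\cdot 1+3)=0$, hence $g=1$.

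Ellipticity then follows from the general structure theory. The flow is abelian of type I in the sense of Theorem \ref{thm2}: its vector field is rational and its orbits are the algebraic curves just found. By the parametrization underlying that theorem the flow is expressed through the abelian functions attached to the orbit curve, and when this curve has genus $1$ those abelian functions are elliptic functions, so the flow is elliptic. If a constructive proof is preferred, one proceeds exactly as in the earlier examples: solve (\ref{33}) for the $1$-BIR $xA(x,y)\bl yA(x,y)$ normalizing the second coordinate of the vector field to $-y^2$, pass to the transformed flow whose second component is one-dimensional, and recover the first component by inverting an abelian integral on $y^{4}=c\,t(t-1)^{-2}$; genus $1$ makes this an elliptic integral whose inverse is an elliptic function.

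I expect the genus count to be the main obstacle. The delicate points are verifying connectedness of the cover and correctly reading off the ramification over the branch locus, especially the splitting into two points over $t=1$; the alternative plane-curve route is more error-prone because of the non-reduced tangent cones at infinity. Everything else is either a direct substitution into formulas already proved in the paper or an appeal to the general parametrization of abelian flows of type I.
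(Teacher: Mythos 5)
Your proposal is correct, and it actually proves more than the paper does. For this proposition the paper offers essentially no argument: it obtains the orbit equation by the same substitution of $(B,C)=(5,-\tfrac{1}{3})$ into (\ref{orbitos}) that you perform (your exponents $\tfrac{4}{3},-\tfrac{8}{3},-4$ and the normalization to $x^{-1}(x-y)^2y^3$ are exactly right), simply asserts that these quintics have genus $1$, and then declines to give formulas on the grounds that the integration is ``completely analogous'' to the unramified elliptic flow of Proposition \ref{prop-ell}. Your contribution is the genus verification itself: passing to $t=x/y$ to realize the orbit as the cyclic cover $y^4=c\,t(t-1)^{-2}$, checking irreducibility from $\gcd(v_0(g),4)=1$, reading off the ramification $(e=4,4,2,2)$ over $t=0,\infty,1$, and getting $2g-2=-8+(3+3+2)=0$ from Riemann--Hurwitz. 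This is consistent with the plane-curve count you allude to ($p_a=6$ minus $\delta=3$ for the $(3,4)$-cusp and $\delta=2$ for the tacnode), and it is the cleaner of the two routes. Your justification of ellipticity --- that the type-I parametrization inverts an abelian integral on the orbit curve, which for genus $1$ is an elliptic integral --- is exactly the mechanism the paper uses in Section \ref{abel-non-ell} for the genus-$2$ analogue and implicitly invokes here by its appeal to analogy with Proposition \ref{prop-ell}. In short: same skeleton as the paper, but you supply the one nontrivial verification the paper leaves to the reader.
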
 
We do not provide analytic formulas for the solution, since these are completely analogous to the ones given in Proposition \ref{prop-ell}.

\section{Abelian non-elliptic flow}
\label{abel-non-ell}
Here we explore the case $(B,C)=(-4,-\frac{3}{2})$ and so the vector field
\begin{eqnarray*}
\varpi(x,y)\bl\varrho(x,y)=(2x^2-4xy)\bl(-3xy+y^2).
\end{eqnarray*} 
This implies $x\varrho-y\varpi=-5x^2y+5xy^2$. This is an example of the flow with a ramification, whose orbits are genus $2$ quintic curves $u(u-v)^2v^2\equiv\mathrm{const.}$ The vector field and the selected orbit is shown in Figure \ref{figure5}.\\

\begin{figure}
\epsfig{file=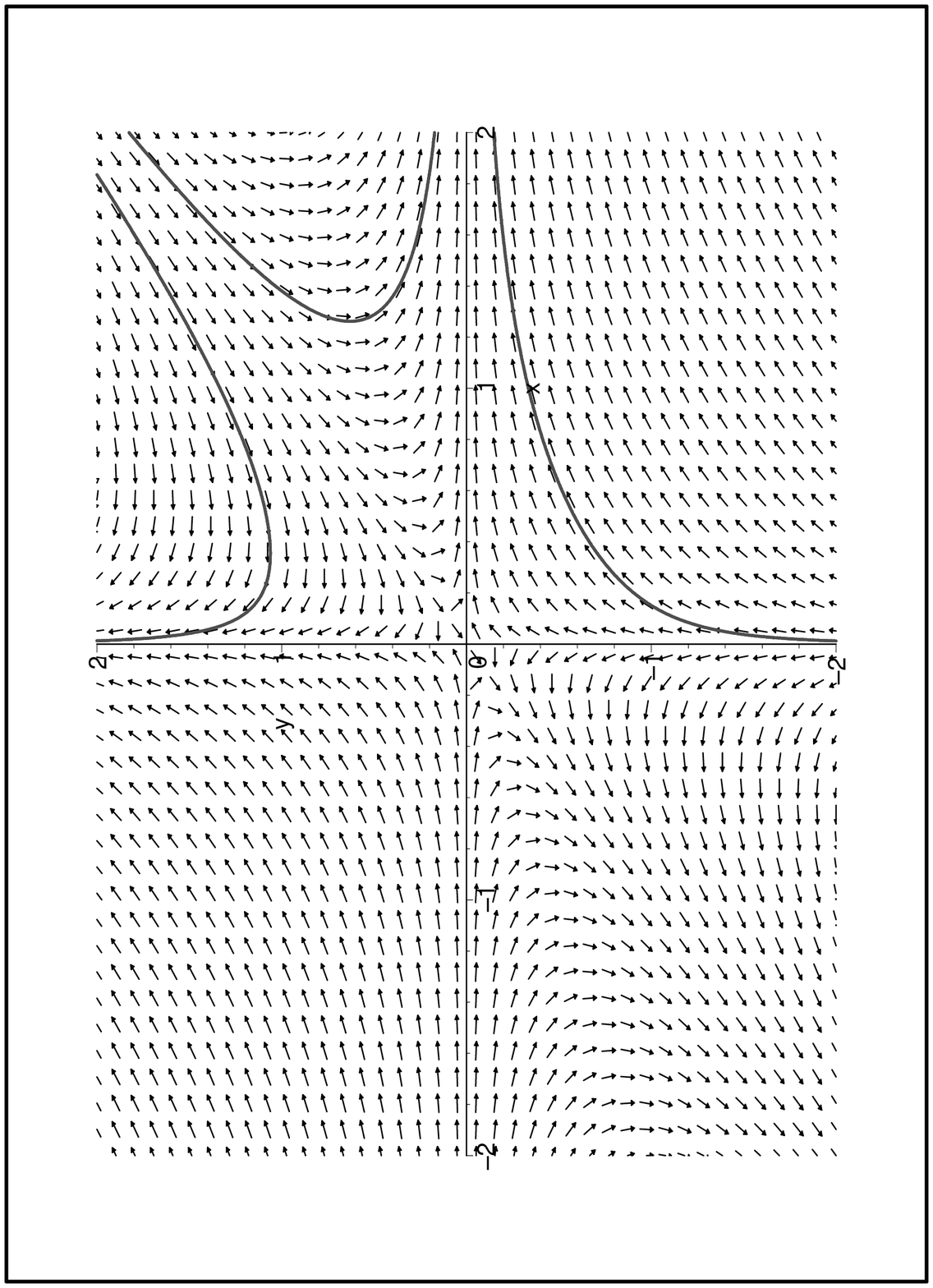,width=340pt,height=340pt,angle=-90}
\caption{The normalized vector field of the abelian non-elliptic flow $\Delta(\m{x})$, $(x,y)\in[-2,2]^{2}$. The orbit shown is 
$x(x-y)^{2}y^{2}=0.2$.}
\label{figure5}
\end{figure}

We know that (\cite{alkauskas}, Section 3.2) if we define recurrently
\begin{eqnarray}
\varpi^{(i+1)}(x,y)=\frac{1}{i}\,[\varpi_{x}^{(i)}(x,y)\varpi(x,y)+\varpi_{y}^{(i)}\varrho(x,y)],\quad i\geq 2,
\label{reku}
\end{eqnarray}
analogously for $\varrho^{(i)}(x,y)$, then
\begin{eqnarray}
\vartheta(xz,yz)=xz+\sum\limits_{i=2}^{\infty}z^{i}\varpi^{(i)}(x,y),\quad
\xi(xz,yz)=yz+\sum\limits_{i=2}^{\infty}z^{i}\varrho^{(i)}(x,y).
\label{series}
\end{eqnarray}
In particular, experimenting numerically, we get that on the line $x=y$ we have a particularly simple expression:
\begin{eqnarray*}
\vartheta(x,x)=\sum\limits_{i=1}(-1)^{i-1}2^{i-1}x^{i}=\frac{x}{1+2x};\quad \xi(x,x)=\frac{x}{1+2x}.
\end{eqnarray*}
Moreover,
\begin{eqnarray*}
\Delta(x,0)=\frac{x}{1-2x}\bl 0,\quad \Delta(0,x)=0 \bl \frac{x}{1-x}.
\end{eqnarray*}
Of course, we know this rigorously, and it follows from Proposition (\ref{prop-sing}).
On the other hand, using (\ref{reku}), we can easily calculate
\begin{eqnarray}
\frac{\vartheta(x,-x)}{x}=1+6x+16x^{2}+36x^{3}+111x^{4}+369x^{5}+\frac{2243}{2}x^{6}+\frac{46101}{14}x^{7}+\cdots,\label{theta-exp}\\
\frac{\xi(x,-x)}{x}=-1+4x-x^{2}-x^{3}-\frac{69}{4}x^{4}+\frac{37}{4}x^{5}+
\frac{79}{8}x^{6}+\frac{793}{56}x^{7}-\frac{48931}{448}x^{8}-\cdots.\nonumber
\end{eqnarray}  
In particular,
\begin{eqnarray}
\frac{\vartheta(x,-x)}{\xi(x,-x)}=
-1-10x-55x^{2}-245x^{3}-\frac{4035}{4}x^{4}
-\frac{15763}{4}x^{5}-\frac{118275}{8}x^6-
\frac{3017075}{56}x^7\nonumber\\
-\frac{86027325}{448}x^8-\frac{43032775}{64}x^9-\frac{2079392255}{896}x^{10}-\frac{78011676535}{9856}x^{11}-\cdots.
\label{expann}
\end{eqnarray} 
Soon we will clear out what kind of function is this last expansion (\ref{expann}) (in fact, an abelian function).                           
\subsection{Special hypergeometric function} As we noticed in \cite{alkauskas-un}, the hypergeometric function $\,_{2}F_{1}(\frac{2}{3},1;\frac{4}{3};x)$ plays an important role in the description of the flow $\Lambda(\m{x})$, defined in the Subsection \ref{ell-un}.\\

Equally, in the description of the flow $\Delta(\m{x})$ the following function  plays a crucial role:
\begin{eqnarray}
Y(x)=\frac{1}{5}\int\limits_{0}^{1}\frac{\d t}{(1-t)^{4/5}(1-xt)^{3/5}}
=\,_{2}F_{1}\Big{(}\frac{3}{5},1;\frac{6}{5};x\Big{)},\quad -\infty<x<1.
\label{int}
\end{eqnarray}
It satisfies the initial condition $Y(0)=1$, and the linear ODE
\begin{eqnarray*}
5x(1-x)Y'(x)+(1-3x)Y(x)=1.
\end{eqnarray*}
Again, as in \cite{alkauskas-un}, we notice that the derivative of this gives
\begin{eqnarray*} 
5x(1-x)Y''(x)+(6-13x)Y'(x)-3Y(x)=0,
\end{eqnarray*}
which coincides with Euler's hypergeometric differential equation for $(a,b;c)=(\frac{3}{5},1;\frac{6}{5})$. Note that this always happens, that is, $\,_{2}F_{1}(a,b;c,x)$ satisfies non-homogenic first order ODE, in case $b=1$. The change $1-xt=(1-x)T$ in (\ref{int}) gives
\begin{eqnarray}
Y(x)=\frac{1}{5x^{1/5}(1-x)^{2/5}}
\int\limits_{1}^{\frac{1}{1-x}}\frac{\d t}{t^{3/5}(t-1)^{4/5}},\quad 0<x<1.
\label{invv}
\end{eqnarray}

We can see the role of $Y(x)$ in the analytic description of the flow $\Delta(\m{x})$ from the following
\begin{prop}Let us define the curve $\mathscr{K}$ parametrically by 
\begin{eqnarray*}
\mathscr{K}=\Big{\{}\Big{(}xY(x),Y(x)\Big{)}:-\infty<x<1\Big{\}}.
\end{eqnarray*}
Then the function $\vartheta(x,y)$ vanishes on the curve $\mathscr{K}$, and $\xi(x,y)$ on this curve takes the value $\infty$:
\begin{eqnarray*}
\vartheta(xY(x),Y(x))\equiv 0,\quad \xi(xY(x),Y(x))=\infty\text{ for }-\infty<x<1.
\end{eqnarray*}
\label{zero-loc}
\end{prop}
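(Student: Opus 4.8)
The claim is that the flow component $\vartheta$ vanishes and $\xi$ becomes infinite precisely along the parametrized curve $\mathscr{K}=\{(xY(x),Y(x))\}$, where $Y(x)={}_2F_1(\tfrac35,1;\tfrac65;x)$ is the special hypergeometric function introduced in~(\ref{int}). My reading is that $Y$ is essentially the inverse of the abelian integral $\alpha$ appearing in Proposition~\ref{prop-abel}, so the zero locus of $\vartheta$ should be traceable back to the explicit analytic description of $\Delta$ given there.

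**The plan.** First I would work from the closed-form expression for $\Delta(\m{x})=\vartheta(x,y)\bl\xi(x,y)$ in Proposition~\ref{prop-abel}, written in terms of the abelian function $\k$, the abelian integral $\alpha$, and $\v=[x(x-y)^2y^2]^{1/5}$. Inspecting the numerator of $\vartheta$, it has the factor $\k(\alpha(x/y)-\v)\cdot\big(\alpha(x/y)-\v\big)$; the vanishing of $\vartheta$ should correspond to $\k\big(\alpha(\tfrac{x}{y})-\v\big)=0$, i.e.\ to the argument $\alpha(x/y)-\v$ being a zero of the abelian function $\k$. Since $\k$ is the inverse of $\alpha$ and $\alpha(0)=0$, the relevant condition is $\alpha(x/y)-\v=0$, which I would rewrite as
\begin{eqnarray*}
\alpha\Big(\frac{x}{y}\Big)=\v=\big[x(x-y)^2y^2\big]^{1/5}.
\end{eqnarray*}
Simultaneously, that same factor sits in the \emph{denominator} of $\xi$, which immediately forces $\xi=\infty$ on the same locus, disposing of the second assertion with no extra work.

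**Reducing to a statement about $Y$.** The key step is to show that the curve defined implicitly by $\alpha(x/y)=\v$ coincides with the parametrization $(xY(x),Y(x))$. I would set $t=x/y$ as the curve parameter and compute $\v$ along the proposed curve $\mathscr{K}$: substituting $x\mapsto tY(t)$ and $y\mapsto Y(t)$ gives $\v=\big[tY(t)\cdot(tY(t)-Y(t))^2\cdot Y(t)^2\big]^{1/5}=Y(t)\cdot[t(t-1)^2]^{1/5}$. The required identity $\alpha(t)=\v$ then becomes
\begin{eqnarray*}
\alpha(t)=Y(t)\,\big[t(t-1)^2\big]^{1/5}.
\end{eqnarray*}
Now I would invoke the two explicit integral representations already in the excerpt: $\alpha(x)=\tfrac15\int_1^{1/(1-x)} t^{-3/5}(t-1)^{-4/5}\,\d t$ from Proposition~\ref{prop-abel}, and the transformed form~(\ref{invv}), $Y(x)=\tfrac{1}{5x^{1/5}(1-x)^{2/5}}\int_1^{1/(1-x)}t^{-3/5}(t-1)^{-4/5}\,\d t$. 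Comparing these two displays shows directly that $\alpha(x)=x^{1/5}(1-x)^{2/5}\,Y(x)$, and since $[t(t-1)^2]^{1/5}=t^{1/5}(t-1)^{2/5}$ up to a fixed branch/sign normalization, the desired identity $\alpha(t)=Y(t)[t(t-1)^2]^{1/5}$ follows. This is the heart of the argument, and it is essentially a branch-bookkeeping comparison of the two integral formulas rather than a new computation.

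**Main obstacle.** The delicate point is branch selection: $\v$, the radicals $x^{1/5}(1-x)^{2/5}$ versus $[x(x-y)^2y^2]^{1/5}$, and the multivaluedness of both $\k$ and $\alpha$ must be pinned down consistently so that the zero of $\k$ one lands on is exactly the one corresponding to $\alpha^{-1}(0)=0$ rather than a shifted lattice translate. I would handle this by restricting to the real interval $-\infty<x<1$ (exactly the range in which $Y$ is defined by~(\ref{int})), where~(\ref{invv}) fixes the principal branches, and by using the normalization $\sm(0)=0$ / $\k=\alpha^{-1}$ with $\alpha(0)=0$ to identify the correct zero of $\k$. A secondary check is that the factor $\alpha(x/y)-\v$ vanishes to first order, so that the simple zero of $\vartheta$ and the simple pole of $\xi$ are genuine and not cancelled by any coincident zero/pole of $\v$ or of $\k'$ in the alternative $(\k,\k')$ representation; verifying $\k'\neq 0$ at the zero of $\k$ (equivalently that the point lies at a smooth point of the parametrizing curve $5^5(1-x)^3x^4=y^5$) secures this. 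Once the branches are fixed, the whole statement collapses to the single algebraic identity $\alpha(x)=x^{1/5}(1-x)^{2/5}Y(x)$ read off from~(\ref{invv}).
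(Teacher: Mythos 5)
Your argument is correct in substance, but it is not the route the paper takes. The paper's proof is differential: setting $E(x)=\vartheta(xY(x),Y(x))$, it combines the first-order ODE $5x(1-x)Y'(x)+(1-3x)Y(x)=1$ satisfied by $Y$ with the PDE (\ref{g-parteq}) satisfied by $\vartheta$ to obtain $E'(x)\cdot 5x(x-1)Y(x)=-E(x)$, and then invokes the initial value $E(0)=\vartheta(0,1)=0$ from Proposition \ref{prop-sing} to conclude $E\equiv 0$; no closed form for $\Delta$ is needed. Your proof is instead essentially the \emph{a posteriori} verification that the paper itself records in the Note immediately after the proof: substitute $(xY(x),Y(x))$ into the explicit formula of Proposition \ref{prop-abel}, observe that on the curve $\v=Y(x)\,[x(x-1)^2]^{1/5}=\alpha(x)$ by (\ref{invv}), so the argument of $\k$ is $0$ and $\k(0)=0$. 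The trade-off is instructive. The paper's ODE argument is self-contained at that point of the text, but --- as the paper warns in that same Note --- it hides an assumption: the initial condition $E(0)=0$ presupposes continuity of $\vartheta$ at $(0,1)$ along the curve, which can fail for flows whose orbits carry negative exponents (the counterexample $U(x-2x^2,1-2x)=\frac{1}{4\sqrt{x-x^2}}$ is given there). Your route avoids that pitfall entirely, at the price of depending on Proposition \ref{prop-abel}, which the paper only derives afterwards; as a logically independent proof it would require first establishing that explicit integration, and the branch bookkeeping you flag is genuine work rather than a formality. One small slip: the numerator of $\vartheta$ is $\k^{4/5}\big(\alpha(\frac{x}{y})-\v\big)\,\v$, i.e.\ the $\frac{4}{5}$-th power of $\k$ evaluated at $\alpha(\frac{x}{y})-\v$, times $\v$ --- not a product $\k(\cdot)\cdot\big(\alpha(\frac{x}{y})-\v\big)$ --- but this does not affect your conclusion that $\vartheta$ vanishes and $\xi$ blows up exactly where $\k\big(\alpha(\frac{x}{y})-\v\big)=0$.
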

\begin{proof}We will not present the full proof, since it is completely analogous to (\cite{alkauskas-un}, Proposition 5). Indeed, if we put
\begin{eqnarray*}
E(x)=\vartheta(xY(x),Y(x)),\quad \a=xY(x),\quad \b=Y(x),
\end{eqnarray*}
then $E(0)=\vartheta(0,1)=0$ (Proposition \ref{prop-sing}), and from the differential equation for $Y(x)$ we infer that
\begin{eqnarray*}
2\a^{2}-4\a\b-\a&=&[Y(x)+xY'(x)]\cdot Y(x)5x(x-1),\\
-3\a\b+\b^2-\b&=&Y'(x)\cdot Y(x)5x(x-1).
\end{eqnarray*}
So, this and the differential equation for $\vartheta(x,y)$, namely (\ref{g-parteq}), imply
\begin{eqnarray*}
E'(x)\cdot\Big{(}Y(x)5x(x-1)\Big{)}=-E(x)\text{ for }-\infty<x<1.
\end{eqnarray*}
this gives $E(x)\equiv 0$.
\end{proof}
\begin{Note}We warn in advance that (as it turned out later) the proof of the last Proposition, as well as the proof of (Proposition 4, \cite{alkauskas-un}), both contain one hidden assumption; without it the proof is not entirely valid.  Indeed, using exactly the same method of the proof, but now applied to the flow $U\bl V$ given in Proposition \ref{prop-alg},  we would ``obtain" that 
\begin{eqnarray}
U(x-2x^2,1-2x)\mathop{=}^{?}0,
\label{u-loc}
\end{eqnarray} 
which is not true, as can be checked! So, one can ask: what is wrong? Since analogous Proposition \ref{zero-loc}, as can be checked \emph{a posteriori} using the explicit formulas in Proposition \ref{prop-abel}, and also analogous (Proposition 4, \cite{alkauskas-un}), are both correct. Indeed, plugging $(X,Y)=(xY(x),Y(x))$ into the formula for $\vartheta$ of Proposition \ref{prop-abel}, we obtain:
\begin{eqnarray*}
\mathbf{k}\Big{(}\alpha\big{(}\frac{X}{Y}\big{)}-\v\Big{)}=
\mathbf{k}\Big{(}\alpha(x)-Y(x)x^{1/5}(x-1)^{2/5}\Big{)}=\mathbf{k}(0)=0.
\end{eqnarray*}
Anyway, in this particular example of $U\bullet V$ in Proposition \ref{prop-alg}, one can check that all the details of the proof of (\ref{u-loc}) are correct, but the conclusion is definitelly wrong, and (\ref{u-loc}) does not hold.\\

The answer is simple:
\begin{eqnarray*}
U(x-2x^2,1-2x)=\frac{1}{4\sqrt{x-x^2}}. 
\end{eqnarray*}

So
\begin{eqnarray*}
``U(0,1)"=\lim\limits_{x\rightarrow 0}U(x-2x^2,1-2x)=\infty\neq 
\lim\limits_{y\rightarrow 1}U(0,y)=0.
\end{eqnarray*}
(Of course, from the flow property, we must follow the convention that $\sqrt{(y-1)^2}=1-y$ for $y$ sufficiently small). So, the differential equation for $E(x)=U(x-2x^2,1-2x)$ is correct, only the initial condition $E(0)=0$ is not satisfied. The reason why this does not happen in Proposition 9 is because in the latter case the orbits of the flow are given by $x(x-y)^2y^2=\mathrm{const.}$, orbits in (Proposition 4, \cite{alkauskas-un}) are given by $x(x-y)y=\mathrm{const.}$ So, no denominators occur, while for the flow $U\bullet V$ the orbits are $x^{-1}(x-y)^{-1}y^4=\mathrm{const.}$ Thus, there are denominators, and it gives the discontinuity at $(x,y)=(0,1)$. This a a very subtle assumption, and that is why Proposition \ref{zero-loc} is still correct, but the method itself must be applied with a caution.
\end{Note}
To find the flow $\vartheta\bl\xi$ explicitly, we will uses techniques developed in \cite{alkauskas}. Consider the $1-$homogenic plane transformation 
\begin{eqnarray*}
\ell(x,y)=xY\Big{(}\frac{x}{y}\Big{)}\bl yY\Big{(}\frac{x}{y}\Big{)},\quad
\ell^{-1}(x,y)=xY^{-1}\Big{(}\frac{x}{y}\Big{)}\bl yY^{-1}\Big{(}\frac{x}{y}\Big{)}.
\end{eqnarray*}
(Here, of course, $Y^{-1}=\frac{1}{Y}$, while $\ell^{-1}$ stands for the inverse transformation). Put $A(x,y)=Y(\frac{x}{y})$. Consider the projective flow
\begin{eqnarray*}
\widetilde{\Delta}(\m{x})=P(x,y)\bl Q(x,y)=\ell^{-1}\circ\Delta\circ\ell(\m{x}).
\end{eqnarray*}
Denote its vector field by $\eta(x,y)\bl\nu(x,y)$. We can calculate that (\cite{alkauskas}, Eq. (32))
\begin{eqnarray*}
\eta(x,y)&=&A(x,y)\varpi(x,y)-A_{y}(x,y)\Big{(}x\varrho(x,y)-y\varpi(x,y)\Big{)}\\
&=&Y\Big{(}\frac{x}{y}\Big{)}(2x^2-4xy)
+Y'\Big{(}\frac{x}{y}\Big{)}\frac{x}{y^2}(-5x^2y+5xy^{2})=xy+(5x^2-5xy)Y\Big{(}\frac{x}{y}\Big{)},\\
\nu(x,y)&=&A(x,y)\varrho(x,y)+A_{x}(x,y)\Big{(}x\varrho(x,y)-y\varpi(x,y)\Big{)}\\
&=&Y\Big{(}\frac{x}{y}\Big{)}(-3xy+y^2)+Y'\Big{(}\frac{x}{y}\Big{)}(-5x^2+5xy)=y^2.
\end{eqnarray*}
The reason for introduction of a $1-$homogenic plane transformation $\ell$ is that now the vector field $\nu(x,y)$ depends only on $y$, and so it is can be easily integrated. We now will find the orbits of the projective flow $\widetilde{\Delta}(\m{x})$. First, put 
\begin{eqnarray*}
a=xzY\Big{(}\frac{x}{y}\Big{)},\quad b=yzY\Big{(}\frac{x}{y}\Big{)},\quad
\vartheta=\vartheta(a,b),\quad \xi=\xi(a,b).
\end{eqnarray*}
Then 
\begin{eqnarray*}
P\bl Q:=\frac{P(xz,yz)}{z}\bl\frac{Q(xz,yz)}{z}=\frac{1}{z}\cdot\ell^{-1}\circ\Delta\circ\ell(\m{x}z)=
\frac{\vartheta}{zY(\frac{\vartheta}{\xi})}\bl\frac{\xi}{zY(\frac{\vartheta}{\xi})}.
\end{eqnarray*} 
We know the orbits of the flow $\Delta(\m{x})$; so,
\begin{eqnarray*}
\vartheta(\vartheta-\xi)^{2}\xi^{2}=a(a-b)^{2}b^{2}=x(x-y)^{2}y^{2}Y^{5}\Big{(}\frac{x}{y}\Big{)}z^{5}.
\end{eqnarray*}
(The first equality follows from the orbit conservation property for the flow $\Delta$). This gives
\begin{eqnarray*}
P(P-Q)^2Q^2=\frac{\vartheta(\vartheta-\xi)^2\xi^2}{z^5Y^{5}(\frac{\vartheta}{\xi})}=
\frac{x(x-y)^{2}y^2Y^{5}(\frac{x}{y})z^{5}}{z^5Y^{5}(\frac{\vartheta}{\xi})}=
\frac{x(x-y)^{2}y^2Y^{5}(\frac{x}{y})}{Y^{5}(\frac{P}{Q})}.
\end{eqnarray*}
Note that these calculations are carried analogously  in a general setting, not just in case of the flow $\Delta$, and this is perfectly compatible with (\cite{alkauskas}, Corollary 1). Thus, the orbits of the flow $\widetilde{\Delta}(\m{x})$ are given by $\mathscr{W}_{\mathbf{a}}(u,v)=\textrm{const.}$, where
\begin{eqnarray*}
\mathscr{W}_{\mathrm{a}}(x,y)=x(x-y)^{2}y^{2}Y^{5}\Big{(}\frac{x}{y}\Big{)}.
\end{eqnarray*}
Now, we will solve the differential system (\ref{sys}). Since $\nu(x,y)=y^{2}$, this gives $q'(t)=-q^{2}(t)$, and we chose $q(t)=t^{-1}$. So, in this case the last identity of (\ref{sys}) yields
\begin{eqnarray*}
\big{(}p(t)t\big{)}\cdot\big{(}p(t)t-1\big{)}^{2}\cdot Y^{5}\big{(}p(t)t\big{)}=t^{5}.
\end{eqnarray*}
If we define $\k(t)=p(t)t$, this gives the implicit equation for $k(t)$:
\begin{eqnarray}
\k(t)\cdot\big{(}\k(t)-1\big{)}^{2}\cdot Y^{5}\big{(}\k(t)\big{)}=t^{5}.
\label{inverse}
\end{eqnarray}
Let $\alpha(t)$ be the inverse of $\k(t)$: $\k(\alpha(t))=t$. Substituting $\alpha(x)\mapsto t$ into the above, we get
\begin{eqnarray*}
x\cdot\big{(}x-1\big{)}^{2}\cdot Y^{5}(x)=\alpha^{5}(x).
\end{eqnarray*}
This and (\ref{invv}) gives
\begin{eqnarray*}
\alpha(x)=\frac{1}{5}
\int\limits_{1}^{\frac{1}{1-x}}\frac{\d t}{t^{3/5}(t-1)^{4/5}},\quad 0<x\leq 1,\quad
\alpha(1)=\frac{1}{5}B\Big{(}\frac{1}{5},\frac{2}{5}\Big{)}=
\frac{\Gamma(\frac{1}{5})\Gamma^{2}(\frac{2}{5})\sqrt{10+2\sqrt{5}}}{20\pi}.
\end{eqnarray*}
So, indeed, $\alpha(x)$ is an abelian integral, and $\k(x)$ is the inverse of it, an abelian function.\\

If we know this function, we can give immediately an analytic expression for $P$ and $Q$ (\cite{alkauskas-un}, Subsection 4.1):
\begin{eqnarray*}
P\Big{(}\frac{\k(a)\v}{a},\frac{\v}{a}\Big{)}\bl Q\Big{(}\frac{\k(a)\v}{a},\frac{\v}{a}\Big{)}=\frac{\k(a-\v)\v}{a-\v}\bl\frac{\v}{a-\v}.
\end{eqnarray*}
Now let us return to the flow $\Delta(\m{x})=\ell\circ\widetilde{\Delta}\circ\ell^{-1}(\m{x})$. First,
\begin{eqnarray*}
\widetilde{\Delta}\circ\ell^{-1}\Big{(}\frac{\k(a)\v}{a},\frac{\v}{a}\Big{)}=
\widetilde{\Delta}\Bigg{(}\frac{\k(a)\v}{aY\big{(}\k(a)\big{)}},
\frac{\v}{aY\big{(}\k(a)\big{)}}\Bigg{)}\\
=P\Big{(}\frac{\k(a)\tilde{\v}}{a},\frac{\tilde{\v}}{a}\Big{)}\bl 
Q\Big{(}\frac{\k(a)\tilde{\v}}{a},\frac{\tilde{\v}}{a}\Big{)}=\frac{\k(a-\tilde{\v})\tilde{\v}}{a-\tilde{\v}}\bl\frac{\tilde{\v}}{a-\tilde{\v}}.
\end{eqnarray*}
Here
\begin{eqnarray*}
\tilde{\v}=\frac{\v}{Y\big{(}\k(a)\big{)}}.
\end{eqnarray*}
Remember that from (\ref{inverse}) we have:
\begin{eqnarray*}
Y\big{(}\k(t)\big{)}=\frac{t}{\k^{1/5}(t)\big{(}\k(t)-1\big{)}^{2/5}}.
\end{eqnarray*}
So,
\begin{eqnarray}
\vartheta\Big{(}\frac{\k(a)\v}{a},\frac{\v}{a}\Big{)}\bl
\xi\Big{(}\frac{\k(a)\v}{a},\frac{\v}{a}\Big{)}=\ell\circ\widetilde{\Delta}\circ\ell^{-1}\Big{(}\frac{\k(a)\v}{a},\frac{\v}{a}\Big{)}
\nonumber\\
=\frac{\k(a-\tilde{\v})\tilde{\v}}{a-\tilde{\v}}Y\big{(}
\k(a-\tilde{\v})\big{)}\bl
\frac{\tilde{\v}}{a-\tilde{\v}}Y\big{(}\k(a-\tilde{\v})\big{)}\label{general}\\
=\frac{\k^{4/5}(a-\tilde{\v})\tilde{\v}}{\big{(}\k(a-\tilde{\v})-1\big{)}^{2/5}}\bl
\frac{\tilde{\v}}{\k^{1/5}(a-\tilde{\v})\big{(}\k(a-\tilde{\v})-1\big{)}^{2/5}}.
\label{super}
\end{eqnarray}
Let
\begin{eqnarray*}
x=\frac{\k(a)\v}{a},\quad y=\frac{\v}{a}.
\end{eqnarray*}
Then  
\begin{eqnarray*}
\k(a)=\frac{x}{y},\quad a=\alpha\Big{(}\frac{x}{y}\Big{)},\quad
\v=y\alpha\Big{(}\frac{x}{y}\Big{)},\quad
\tilde{\v}=\frac{y\alpha\Big{(}\frac{x}{y}\Big{)}}{Y\Big{(}\frac{x}{y}\Big{)}}=
x^{1/5}(x-y)^{2/5}y^{2/5}.
\end{eqnarray*}
This gives Proposition \ref{prop-abel}. We double-check that the boundary conditions for $\vartheta$ and $\xi$ are indeed satisfied. In particular, the expression (\ref{expann}) is the series for the abelian function
\begin{eqnarray*}
\frac{\vartheta(x,-x)}{\xi(x,-x)}=\k(c-\sqrt[5]{4}\cdot x),\quad c=\alpha(-1)=
-\frac{1}{5}\int\limits_{\frac{1}{2}}^{1}\frac{\d t}{t^{3/5}(t-1)^{4/5}}=-0.927975024086478_{+}.
\end{eqnarray*} 

Now we will derive power series expansion of $\k(x)$ around $c=\alpha(-1)$. Let
\begin{eqnarray*}
f(x)=\frac{1}{5x^{3/5}(x-1)^{4/5}},\quad F(x)=\int\limits_{1}^{x}f(t)\d t.
\end{eqnarray*}
Let us in the identity $F(\frac{1}{1-x})=\alpha(x)$ put $x\rightarrow\k(x)$, and differentiate it. We obtain
\begin{eqnarray}
\k'(x)=\frac{(1-\k(x))^{2}}{f\Big{(}\frac{1}{1-\k(x)}\Big{)}}\Rightarrow 
\k'(x)=5\big{(}1-\k(x)\big{)}^{3/5}\k^{4/5}(x).
\label{diff-k}
\end{eqnarray} 
For example, the property $\k(\alpha(-1))=-1$ gives $\k'(\alpha(-1))=5\sqrt[5]{8}$. Let us define the unknown coefficients $a_{i}$ by 
\begin{eqnarray*}
\k\big{(}\alpha(-1)-\sqrt[5]{4}x\big{)}=-1-\sum\limits_{i=1}^{\infty}a_{i}x^{i}=-1-L.
\end{eqnarray*} 
Then
\begin{eqnarray*}
\k'\big{(}\alpha(-1)-\sqrt[5]{4}x\big{)}=\frac{1}{\sqrt[5]{4}}\sum\limits_{i=1}^{\infty}ia_{i}x^{i-1}.
\end{eqnarray*}
Plugging this into (\ref{diff-k}), we obtain
\begin{eqnarray*}
\sum\limits_{i=1}^{\infty}ia_{i}x^{i-1}=
10\Big{(}1+\sum\limits_{i=1}^{\infty}\frac{a_{i}}{2}x^{i}\Big{)}^{3/5}\Big{(}1+\sum\limits_{i=1}^{\infty}a_{i}x^{i}\Big{)}^{4/5}.
\end{eqnarray*}
An expansion of the last two expressions in brackets via the binomial theorem gives the recursion for the coefficients $a_{i}$, it is uniquely solvable, and we readily see that they are rational numbers. We now double-check that indeed we obtain exactly the (negatives) of the coefficients in (\ref{expann}), which was calculated independently using (\ref{reku}). This very fast method which leads to the expansion (\ref{expann}) is thus very efficient to derive exact values for Taylor series expansion of an abelian function around any point at which its value is rational.\\

 Having verified the powers series expansion for $\frac{\vartheta(x,-x)}{\xi(x,-x)}$, let us calculate the series  of
\begin{eqnarray*}
\frac{(1+L)^{4/5}\sqrt[5]{4}}{(2+L)^{2/5}}=\frac{(1+L)^{4/5}}{(1+\frac{L}{2})^{2/5}}.
\end{eqnarray*} 
MAPLE returns exactly the series (\ref{theta-exp}), and this confirms once again the validity of Proposition \ref{prop-abel}.
\section{Integral non-abelian flow}
\label{sect-int}

\begin{figure}
\epsfig{file=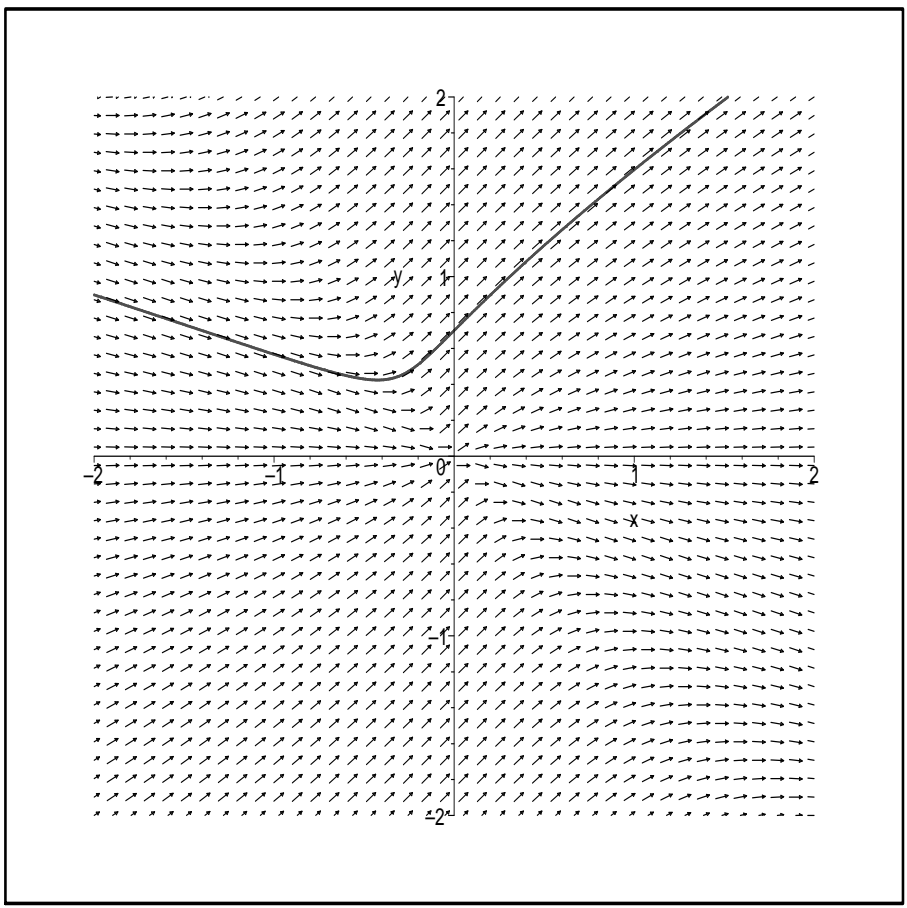,width=340pt,height=340pt,angle=-90}
\caption{The normalized vector field of the integral non-abelian flow $\Xi(\m{x})$, $(x,y)\in[-2,2]^{2}$ The orbit shown is 
$\exp(-\frac{x}{y}-\frac{x^2}{2y^2})y=0.7$}
\label{figure6}
\end{figure}

Let us look back at the Diagram in Figure \ref{diagram}. As we know \cite{alkauskas-un}, there are $4$ exceptional pairs of quadratic forms that produce unramified non-rational flows. Further, all pairs of quadratic forms (\ref{ova}) become integral after, if necessary, an additional conjugation with a linear map $(x,y)\mapsto(xz,yz)$.  As is clear from (\cite{alkauskas}, Subsection 4.3), and this can be easily proven using  (\ref{orbits}), the following proposition holds.

\begin{prop}Let $\varpi(x,y)\bl\varrho(x,y)$ is a pair of quadratic forms with integer coefficients. Then the orbits of this flow are algebraic curves if and only if $y\varpi(x,y)-x\varrho(x,y)$ has no multiple roots. If it has a multiple root, there exists a linear change, such that conjugating the vector field with it, we obtain one of the following pairs of quadratic forms:
\begin{itemize}
\item[i)] $x^2+xy+\lambda y^2\bl xy+y^2$, $\lambda\neq 0$ (a triple root).
\item[ii)] $xy\bl x^2+y^2$ (a triple root).
\item[iii)] $x^2+xy\bl \lambda xy+y^2$, $\lambda\neq 1$ (a double root).
\end{itemize}
\end{prop}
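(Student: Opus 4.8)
The plan is to collapse the orbit equation (\ref{orbits}) into a single separable scalar ODE and then read the root structure of the cubic $P:=y\varpi-x\varrho$ directly off the resulting integral. Writing $\mathscr{W}(x,y)=y^{N}w(t)$ with $t=x/y$, and setting $r(t):=\varrho(t,1)$ and $p(t):=\varpi(t,1)-t\varrho(t,1)$ (so that $P(x,y)=y^{3}p(x/y)$, the finite roots of $P$ are the roots of $p$, and $[1:0]$ is a root of multiplicity $3-\deg p$), Euler's identity turns (\ref{orbits}) into $Nw(t)r(t)+w'(t)p(t)=0$, i.e. $w(t)=\exp\!\big(-N\int \tfrac{r(t)}{p(t)}\,dt\big)$. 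The whole question thus becomes: for which $p$ is $w$ algebraic? First I would neutralize the point at infinity: if it is a multiple root of $P$ (equivalently $\deg p\le 1$), a preliminary change $x\mapsto x+\kappa y$ makes it finite, so I may assume every multiple root of $P$ is a finite root of $p$.

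The heart of the ``only if'' direction is a partial-fraction dichotomy. If $p$ has a root $\alpha$ of multiplicity $m\ge 2$, then $r(\alpha)\neq0$ (otherwise $(t-\alpha)$ would divide $\varrho(t,1)$ and $p(t)$, hence also $\varpi(t,1)$, giving the excluded common factor of the reduced field $\varpi\bl\varrho$), so $r/p$ has a pole of exact order $m$. Hence $\int r/p$ contains a nonconstant polar part $-\tfrac{c}{(m-1)(t-\alpha)^{m-1}}+\cdots$, and $w$ acquires a factor $\exp(\text{nonconstant rational function})$, which is transcendental; its level sets are not algebraic. So algebraic orbits force $P$ to have only simple roots. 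Conversely, if $p$ has simple roots only, then $\int r/p=\sum_i A_i\log(t-\alpha_i)$ is purely logarithmic, whence $w=\prod_i(t-\alpha_i)^{-NA_i}$ and the orbit is a product of powers of the three distinct linear factors of $P$ --- an algebraic curve precisely when the exponents $NA_i$ are rational. \textbf{This rationality of the residues $A_i=r(\alpha_i)/p'(\alpha_i)$ is the one substantive point, and is where I expect the real difficulty to sit}, since it is exactly where the coefficient hypothesis must force the three singular exponents of the foliation to be commensurable rather than merely algebraic. I would settle it by conjugating the field into the normal form (\ref{ova}) (equivalently $x^{2}+Bxy\bl Cxy+y^{2}$) and invoking the explicit orbit (\ref{orbitos}), whose exponents $1-C,\,BC-1,\,1-B$ are manifestly rational; a linear conjugation carries powers of linear forms to powers of linear forms with the \emph{same} exponents, so algebraicity is preserved.

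For the degenerate cases I would work directly with the linear system $y\varpi-x\varrho=P$, whose solution set is one particular solution plus the two-parameter radial kernel $\{(xh,\,yh):h\text{ linear}\}$ on which $y\varpi-x\varrho$ vanishes identically. Once $P$ is put in normal form the pair $\varpi\bl\varrho$ is thus pinned down up to this kernel and up to the residual linear group fixing the root configuration. If $P$ has a \emph{triple} root, a linear change brings it to $\lambda y^{3}$ or to $-x^{3}$; solving the system, normalizing the two free kernel parameters by the residual triangular subgroup, and imposing coprimality of $\varpi,\varrho$ leaves exactly the two inequivalent forms (i) $x^{2}+xy+\lambda y^{2}\bl xy+y^{2}$ and (ii) $xy\bl x^{2}+y^{2}$, the discrete invariant separating them being whether the residue of $\tfrac{r}{p}\,dt$ at the triple root vanishes: it vanishes for (i), where the orbit is a pure $\exp$ of a quadratic in $t$, and is nonzero for (ii), where the orbit carries an extra power-law factor. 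If $P$ has a \emph{double} (and a simple) root, the same steps normalize $P$ to $(1-\lambda)x^{2}y$ and, after discarding the reducible sub-cases in which $\varpi,\varrho$ share a factor, leave the single one-parameter family (iii) $x^{2}+xy\bl \lambda xy+y^{2}$ with $\lambda\neq1$, the modulus $\lambda$ recording the residue at the simple root. Feeding (i), (ii), (iii) back into the ODE confirms their orbits integrate to $\exp$ of a quadratic in $t$, in $1/t$, and to $t^{\ast}\exp(\ast/t)$ respectively --- all transcendental --- which closes the equivalence. The most laborious step is this residual bookkeeping, in particular pinning down and distinguishing the two triple-root normal forms, though it is entirely elementary.
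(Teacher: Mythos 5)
The paper offers no proof of this proposition at all --- it is merely asserted to be ``clear from (\cite{alkauskas}, Subsection 4.3)'' and ``easily proven using (\ref{orbits})'' --- so your reduction to the scalar ODE $Nw\,r+w'p=0$ and the partial--fraction dichotomy is surely the intended sketch. Your ``only if'' direction (a pole of order $\ge 2$ of $r/p$ forces a factor $\exp(\text{nonconstant rational function})$ in $\mathscr{W}$, hence transcendental level sets) is sound, and your bookkeeping for the normal forms (i)--(iii), including the residue at the triple root as the invariant separating (i) from (ii), checks out.

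The genuine gap is exactly the step you flag as the crux, and your proposed fix does not close it. Conjugating into $x^{2}+Bxy\bl Cxy+y^{2}$ requires a linear change sending the three roots of $y\varpi-x\varrho$ to $[0:1],[1:0],[1:1]$; when those roots are irrational this change is defined only over the splitting field, the resulting $B,C$ are irrational, and the exponents $1-C,\ BC-1,\ 1-B$ of (\ref{orbitos}) are then incommensurable --- so invoking (\ref{orbitos}) is circular. In fact the ``if'' direction fails as literally stated: take $\varpi\bl\varrho=xy+y^{2}\bl x^{2}$, a coprime integer pair. Then $y\varpi-x\varrho=-(x^{3}-xy^{2}-y^{3})$ has three distinct roots ($t^{3}-t-1$ has discriminant $-23$), yet the residues of $r/p=t^{2}/(-t^{3}+t+1)$ at the roots are $-\alpha_i^{2}/(3\alpha_i^{2}-1)$, conjugate cubic irrationalities with irrational pairwise ratios (if $\alpha^{2}/(3\alpha^{2}-1)$ were rational, so would be $\alpha^{2}$, impossible for a root of an irreducible cubic); hence $\mathscr{W}=\prod_i(x-\alpha_i y)^{e_i}$ has incommensurable exponents and its level sets are not algebraic curves. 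The statement --- and therefore any proof --- needs the additional hypothesis that $y\varpi-x\varrho$ splits into linear factors over $\mathbb{Q}$, equivalently that the field is $\mathbb{Q}$-linearly conjugate to $x^{2}+Bxy\bl Cxy+y^{2}$ with $B,C\in\mathbb{Q}$; this holds for all the reduced fields the paper actually works with, but not for an arbitrary pair of integer quadratic forms. With that hypothesis added, your argument goes through.
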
 
As the most analytically complicated example, we will now integrate the vector field given in i) in the special case $\lambda=1$; for a general $\lambda\neq 0$ the results are completely analogous. Note that, differently from abelian (algebraic orbits) case, here the property that $\lambda$ is or is not an integer is of no importance. So, let
\begin{eqnarray*}
\varpi(x,y)\bl\varrho(x,y)=x^2+xy+y^2\bl xy+y^2.
\end{eqnarray*}

The method applied in Section \ref{abel-non-ell} is very general and is applicable here, only with all necessary adjustments. So we will not repeat all the arguments step by step, and only provide a sketch. The final verification based on power series expansion confirms that all the ends do meet.\\

Let this flow be $\Xi(\m{x})=G(x,y)\bl H(x,y)$. In particular, using the same method as in  the previous section, that is, the recursion (\ref{reku}), we can calculate:
\begin{eqnarray}
\frac{G(x,-x)}{x}
=1+x+\frac{1}{2}x^2+\frac{2}{3}x^3+\frac{7}{24}x^4+\frac{13}{30}x^5+\frac{127}{720}x^6+\frac{88}{315}x^7+\frac{4369}{40320}x^8\nonumber\\
+\frac{4069}{22680}x^9+\frac{34807}{518400}x^{10}+\frac{17926}{155925}x^{11}
+\frac{20036983}{479001600}x^{12}+\frac{7157977}{97297200}x^{13}+\cdots.
\label{g-expan}
\end{eqnarray}
We will soon see that the radius of convergence of this series is $\frac{\sqrt{\pi}}{\sqrt{2}}$, and this agrees numerically very well with the Cauchy's formula for the radius of convergence of power series.\\

The orbits of the flow $\Xi$ are found via (\ref{orbits}); so, they are given by
\begin{eqnarray*}
\mathscr{W}(x,y)=\exp\Bigg{(}-\frac{x}{y}-\frac{x^2}{2y^2}\Bigg{)}y=\mathrm{const.}
\end{eqnarray*}
The solution of the differential equation (\ref{33}), if we choose the $``+"$ sign, is given by $Z(x)$, where
\begin{eqnarray*}
Z(x)=-\frac{\sqrt{\pi e}}{\sqrt{2}}\exp\Bigg{(}x+\frac{x^2}{2}\Bigg{)}\cdot
\mathrm{erf}\Bigg{(}\frac{x+1}{\sqrt{2}}\Bigg{)},\text{ where }
\mathrm{erf}(x)=\frac{2}{\sqrt{\pi}}\int\limits_{0}^{x}e^{-t^2}\d t.
\end{eqnarray*} 
So, $Z(x)$ is an entire function. Let us define the function $\l(t)$ implicitly from
\begin{eqnarray}
Z(\l(t))\cdot\exp\Big{(}-\l(t)-\frac{\l^{2}(t)}{2}\Big{)}=t.
\label{inverse2}
\end{eqnarray}
Let $\beta(t)$ be the inverse of $\l(t)$. Then
\begin{eqnarray*}
\beta(t)=-\frac{\sqrt{\pi e}}{\sqrt{2}}\mathrm{erf}\Bigg{(}\frac{t+1}{\sqrt{2}}\Bigg{)}.
\end{eqnarray*} 
So,
\begin{eqnarray*}
\l(t)=\sqrt{2}\mathrm{erf}^{-1}\Bigg{(}-\frac{t\sqrt{2}}{\sqrt{\pi e}}\Bigg{)}-1.
\end{eqnarray*}
For example, for real $t$, we have
\begin{eqnarray*}
\l(0)=-1,\quad \l\Big{(}\frac{\sqrt{\pi e}}{\sqrt{2}}\Big{)}=-\infty,\quad 
\l\Big{(}-\frac{\sqrt{\pi e}}{\sqrt{2}}\Big{)}=\infty.
\end{eqnarray*}
In particular, using the known power series expansion of the $\mathrm{erf}^{-1}$ function (see ``The Online Encyclopedia of Integer Sequences", A092676 and A092677 \cite{oeis}), we have:\small
\begin{eqnarray}
\l(xe^{1/2})=-1-x-\frac{1}{6}x^{3}-\frac{7}{120}x^{5}
-\frac{127}{5040}x^{7}-\frac{4369}{362880}x^{9}-\frac{34807}{5702400}x^{11}-\frac{20036983}{6227020800}x^{13}
-\cdots.
\label{erfinv}
\end{eqnarray} \normalsize
Next, the general formula (\ref{general}), valid for all flows, claims that
\begin{eqnarray*}
G\Big{(}\frac{\l(a)\v}{a},\frac{\v}{a}\Big{)}\bl
H\Big{(}\frac{\l(a)\v}{a},\frac{\v}{a}\Big{)}
=\frac{\l(a-\tilde{\v})\tilde{\v}}{a-\tilde{\v}}Z\big{(}
\l(a-\tilde{\v})\big{)}\bl
\frac{\tilde{\v}}{a-\tilde{\v}}Z\big{(}\l(a-\tilde{\v})\big{)},
\end{eqnarray*}
where
\begin{eqnarray*}
\tilde{\v}=\frac{\v}{Z(\l(a))}.
\end{eqnarray*}
Remember that from (\ref{inverse2}) we have:
\begin{eqnarray*}
Z\big{(}\l(t)\big{)}=t\exp\Big{(}\l(t)+\frac{\l^{2}(t)}{2}\Big{)}.
\end{eqnarray*}
Let
\begin{eqnarray*}
x=\frac{\l(a)\v}{a},\quad y=\frac{\v}{a}.
\end{eqnarray*}
Then  
\begin{eqnarray*}
\l(a)=\frac{x}{y},\quad a=\beta\Big{(}\frac{x}{y}\Big{)},\quad
\v=y\beta\Big{(}\frac{x}{y}\Big{)},\quad
\tilde{\v}=\frac{y\beta\Big{(}\frac{x}{y}\Big{)}}{Z\Big{(}\frac{x}{y}\Big{)}}=
\exp\Bigg{(}-\frac{x}{y}-\frac{x^2}{2y^2}\Bigg{)}y.
\end{eqnarray*}
So, we obtain
\begin{eqnarray*}
G(x,y)\bl H(x,y)=\psi\v\exp\Big{(}\psi+\frac{\psi^{2}}{2}\Big{)}\bl \v\exp\Big{(}\psi+\frac{\psi^{2}}{2}\Big{)},\\
\text{ where } 
\psi=\l\Big{(}\beta\Big{(}\frac{x}{y}\Big{)}-\v\Big{)},
\v=\exp\Bigg{(}-\frac{x}{y}-\frac{x^2}{2y^2}\Bigg{)}y.
\end{eqnarray*}
This gives Proposition \ref{prop-int}. In the special case $(x,y)=(x,-x)$, we have:
\begin{eqnarray*}
\v=-xe^{1/2},\quad\beta\Big{(}\frac{x}{y}\Big{)}=\beta(-1)=0,\quad\psi=\l(xe^{1/2}).
\end{eqnarray*}
So,
\begin{eqnarray*}
\frac{G(x,-x)}{x}=-e^{1/2}\l(xe^{1/2})\exp\Big{(}\l(xe^{1/2})+\frac{1}{2}\l^{2}(xe^{1/2})\Big{)}
=-\l(xe^{1/2})\exp\Big{(}\frac{1}{2}\big{(}\l(xe^{1/2})+1\big{)}^2\Big{)}.
\end{eqnarray*}
Now, the power series for $\l(xe^{1/2})+1$ starts at $-x$, so if we substitute (\ref{erfinv}) into the above, the power series for $\exp$, we obtain again a power series, and each coefficient is a finite sum. We plug thus (\ref{erfinv}), and MAPLE gives back what is expected, the powers series (\ref{g-expan}). So, we can be sure that all the reasoning is correct!

\section{Classification of algebraic flows}
Now we are in a position to prove Theorem \ref{thm3}. Since algebraic flow is necessarily an abelian flow, we are in a situation desribed by Theorem \ref{thm2}. First, we consider the case I.

\subsection{Abelian flows of type I which are algebraic} Let us consider (\ref{ovaa}):
\begin{eqnarray*}
\varpi(x,y)\bl\varrho(x,y)=\frac{B-1}{C-1}x^{2}+Bxy\bl
\frac{B-1}{C-1}Cxy+y^2.
\end{eqnarray*} 
Or task to to find all pairs of rational numbers $B,C$ such that this vector field gives rise to an algebraic flow. For simplicity, put
\begin{eqnarray*}
\frac{C}{C-1}=-n,\quad C=\frac{n}{n+1},\quad 1-B=Q.
\end{eqnarray*}
So, we are dealing with the vector field
\begin{eqnarray}
\varpi(x,y)\bl\varrho(x,y)=Q(n+1)x^{2}+(1-Q)xy\bl
Qnxy+y^2,\quad Q,n\in\mathbb{Q}.
\label{vf}
\end{eqnarray} 
The case $Q=0$ gives the flow $\frac{x}{1-y}\bl\frac{y}{1-y}$, and the case $n=0$ produces indeed algebraic flow (the explicit formulas will be soon given), so we will assume henceforth $n,Q \neq 0$.
We will now integrate this vector field. Since $x\varrho(x,1)-\varpi(x,1)=Qx(1-x)$, the differential equation (\ref{33}), in case we choose the ``+" sign, reads as
\begin{eqnarray}
Qx(1-x)f'(x)+(Qnx+1)f(x)=1.
\label{sch}
\end{eqnarray}
The solution of this differential equation can be given by 
\begin{eqnarray*}
\,_{2}F_{1}\Big{(}-n,1;1+\frac{1}{Q};x\Big{)}:=P_{n,Q}(x).
\end{eqnarray*}

The orbits of the flow $\phi(\m{x})=u(x,y)\bl v(x,y)$ with the vector field $\varpi(x,y)\bl \varrho(x,y)$ is given by
\begin{eqnarray*}
\mathscr{W}(x,y)=x^{\frac{1}{Q}}(x-y)^{-n-\frac{1}{Q}}y^{n+1}=\mathrm{const.}
\end{eqnarray*}
 Let us define the function $\n(t)$ implicitly from
\begin{eqnarray}
P_{n,Q}(\n(t))\cdot \big{(}\n(t)\big{)}^{\frac{1}{Q}}\big{(}\n(t)-1\big{)}^{-n-\frac{1}{Q}}=t.
\label{inverse3}
\end{eqnarray}
Let $\gamma(t)$ be the inverse of $\n(t)$. Then
\begin{eqnarray*}
\gamma(t)=P_{n,Q}(t)\cdot t^{\frac{1}{Q}}(t-1)^{-n-\frac{1}{Q}}.
\end{eqnarray*} 

Next, the general formula (\ref{general}) gives
\begin{eqnarray*}
u\Big{(}\frac{\n(a)\v}{a},\frac{\v}{a}\Big{)}\bl
v\Big{(}\frac{\n(a)\v}{a},\frac{\v}{a}\Big{)}
=\frac{\n(a-\tilde{\v})\tilde{\v}}{a-\tilde{\v}}P_{n,Q}\big{(}
\n(a-\tilde{\v})\big{)}\bl
\frac{\tilde{\v}}{a-\tilde{\v}}P_{n,Q}\big{(}\n(a-\tilde{\v})\big{)},
\end{eqnarray*}
where
\begin{eqnarray*}
\tilde{\v}=\frac{\v}{P_{n,Q}(\n(a))}.
\end{eqnarray*}
Let, as before,
\begin{eqnarray*}
x=\frac{\n(a)\v}{a},\quad y=\frac{\v}{a}.
\end{eqnarray*}
Then  
\begin{eqnarray*}
\n(a)=\frac{x}{y},\quad a=\gamma\Big{(}\frac{x}{y}\Big{)},\quad
\v=y\gamma\Big{(}\frac{x}{y}\Big{)},\quad
\tilde{\v}=\frac{y\gamma\Big{(}\frac{x}{y}\Big{)}}{P_{n,Q}\Big{(}\frac{x}{y}\Big{)}}=
x^{\frac{1}{Q}}(x-y)^{-n-\frac{1}{Q}}y^{n+1}.
\end{eqnarray*}
Note that similarly as in Sections \ref{abel-non-ell} and \ref{sect-int}, $\v$ is necessarily $1-$homogenic.
So,
\begin{eqnarray}
u(x,y)\bl v(x,y)&=&\n^{1-\frac{1}{Q}}\big{(}\varkappa\big{)}
\Big{(}\n(\varkappa)-1\Big{)}^{n+\frac{1}{Q}}\v\bl 
\n^{-\frac{1}{Q}}\big{(}\varkappa\big{)}
\Big{(}\n(\varkappa)-1\Big{)}^{n+\frac{1}{Q}}\v,
\label{alg-expli}\\
\text{where }\varkappa&=&\gamma\Big{(}\frac{x}{y}\Big{)}-\v,\v=x^{\frac{1}{Q}}(x-y)^{-n-\frac{1}{Q}}y^{n+1}.
\nonumber
\end{eqnarray}
We can simplify this. Note that from the orbits property,
\begin{eqnarray}
u^{\frac{1}{Q}}(u-v)^{-n-\frac{1}{Q}}v^{n+1}=x^{\frac{1}{Q}}(x-y)^{-n-\frac{1}{Q}}y^{n+1}.
\label{flow-alg}
\end{eqnarray}
Also, since $\frac{u(x,y)}{v(x,y)}=\n(\varkappa)$, we obtain
\begin{eqnarray*}
\gamma\Big{(}\frac{u}{v}\Big{)}=\varkappa=\gamma\Big{(}\frac{x}{y}\Big{)}-x^{\frac{1}{Q}}(x-y)^{-n-\frac{1}{Q}}y^{n+1}.
\end{eqnarray*}
This, after some calculations, gives the identity
\begin{eqnarray}
\frac{1}{v}P_{n,Q}\Big{(}\frac{u}{v}\Big{)}=\frac{1}{y}P_{n,Q}\Big{(}\frac{x}{y}\Big{)}-1.
\label{tarp}
\end{eqnarray}
Together with an orbit conservation property (\ref{flow-alg}), this defines the pair of functions $(u,v)$. We are left to explore for which pairs of rational numbers $(n,Q)$ these two equations have algebraic functions as solutions. We will show that this happens only if $P_{n,Q}$ is an algebraic function. On the other hand, if this holds, then $u$ and $v$ are algebraic functions too, since they satisfy a pair of independent algebraic equations.\\

So, suppose $u\bl v$ is a flow and $u,v$ are algebraic functions. Fix $x,y\neq 0$, $x\neq y$, and consider the line $(x\omega,y\omega )$ for $\omega$ being a variable. We claim that then
\begin{eqnarray*}
\frac{u(x\omega,y\omega)}{v(x\omega,y\omega)}
\end{eqnarray*} 
is a non-constant algebraic function in $\omega$. Indeed, suppose it is constant. Then the orbit conservation property (\ref{flow-alg}) gives
\begin{eqnarray*}
v(x\omega,y\omega)\Big{(}\frac{u}{v}\Big{)}^{\frac{1}{Q}}
\Big{(}\frac{u}{v}-1\Big{)}^{-n-\frac{1}{Q}}=
y\omega\Big{(}\frac{x}{y}\Big{)}^{\frac{1}{Q}}
\Big{(}\frac{x}{y}-1\Big{)}^{-n-\frac{1}{Q}}.
\end{eqnarray*}
So, for variable $\omega$, we see that $\frac{u}{v}\neq 0,1$, and moreover,
\begin{eqnarray*}
v(x\omega,y\omega)=y\omega\delta,
\end{eqnarray*}
for a certain constant $\delta$. This gives $u(x\omega,y\omega)=y\omega\gamma$, for another constant $\gamma$. Then the boundary conditions (\ref{bound}) imply the exact values for $\delta$ and $\gamma$:
\begin{eqnarray*}
u(x\omega,y\omega)=x\omega,\quad v(x\omega,y\omega)=y\omega.
\end{eqnarray*}
But now this contradicts to (\ref{tarp}) for $\omega$ small enough (to avoid questions of ramification).\\

So, we have proved that $\frac{u}{v}$ on the line $(x\omega,y\omega)$ is not a constant function. This gives
\begin{eqnarray*}
P_{n,Q}\Big{(}\frac{u(x\omega,y\omega)}{v(x\omega,y\omega)}\Big{)}=\frac{v(x\omega,y\omega)}{y\omega}P_{n,Q}\Big{(}\frac{x}{y}\Big{)}-v(x\omega,y\omega).
\end{eqnarray*}
As a function in $\omega$ (the numbers $x,y$ being fixed), the right hand side is an algebraic function. On the other hand, the argument of $P_{n,Q}$ on the  left is a non-constant algebraic function. If $P_{n,Q}$ has an infinite monodromy, the left hand side has an infinite monodromy, too - a contradiction. Thus, the only possibility is when $P_{n,Q}$ itself is in fact \emph{an algebraic function}. \\

H. Schwarz \cite{vidunas} classified all cases when a hypergeometric function has a finite monodromy and thus is an algebraic function. This research was greatly generalized by F. Beukers in relation to higher hypergeometric functions \cite{beukers}. The local exponent differences (Schwarz parameters, see \cite{vidunas}) at three regular singular points $0$, $1$ and $\infty$ are then (up to a sign) in our case are equal to 
\begin{eqnarray}
\Big{(}-\frac{1}{Q},n+\frac{1}{Q},-n-1\Big{)}.
\label{param-s}
\end{eqnarray}
We immediately recognize the (negatives) of the three numbers (\ref{cyk}) that add up to $1$. Now, going through H. Schwarz's list to see when the function $P_{n,Q}(x)$ is algebraic with the condition that Schwarz parameters (\ref{param-s}) add up to an integer, we see that the only possible case is when one of the parameters is a negative integer, say, such is $-n-1$, $n\in\mathbb{N}_{0}$, and $Q$ is an arbitrary rational number, except when $\frac{1}{Q}\in\{-n,-n+1,\ldots,-1\}$, in which case $P_{n,Q}$ is not an algebraic function. In fact, we can manage without Schwarz's list. Namely, the differential equation (\ref{sch}) is easily solved in quadratures. The solution is algebraic function if and only if
\begin{eqnarray*}
\int(x-1)^{-n-1-\frac{1}{Q}}x^{-1+\frac{1}{Q}}\d x
\end{eqnarray*} 
is algebraic. Now, the answer follows from calculus routines: we can rotate all three Schwarz parameters, which corresponds to a change of variable in the above integral. This gives Theorem \ref{thm3}.
\subsection{Abelian flows of type II which are algebraic}
\label{sub-alg-II}
As we know from (\cite{alkauskas}, Section 4), we cannot further reduce the pair of $2-$homogenic rational functions $\varpi(x,y)\bl\varrho(x,y)$ with a help of conjugating this flow with a $1-$BIR in case $\varrho(x,y)=0$ and $\varpi(x,y)$ is a $2-$homogenic rational function whose numerator has no multiple roots. This is \emph{an obstruction} case (see \cite{alkauskas}, Section 4, case II). So, henceforth in this subsection we assume that this case holds, and investigate when this does lead to an algebraic flow $u(x,y)\bl v(x,y)$ - it will appear, never. We know, of course, that in this case $v(x,y)=y$, the orbits of the flow are given by $y=\mathrm{const.}$, and $u(x,y)$ is defined implicitly from the identity (\cite{alkauskas}, Eq. (60))
\begin{eqnarray}
\int\limits_{y/u(x,y)}^{y/x}\frac{\d t}{\varpi(1,t)}=y.
\label{inte-II}
\end{eqnarray} 
Now, since the numerator of $\varpi(1,t)$ has no multiple roots, we can write
\begin{eqnarray*}
\frac{1}{\varpi(1,t)}=P(t)+\sum\limits_{i=1}^{r}\frac{a_{i}}{t-\alpha_{i}},\quad \alpha_{j}\neq\alpha_{k}\text{ for }j\neq k,
\end{eqnarray*}
where $P(t)$ is a polynomial. Let $\widehat{P}$ be the anti-derivative of $P$. Integration of (\ref{inte-II}) gives
\begin{eqnarray*}
\widehat{P}\Big{(}\frac{y}{x}\Big{)}-\widehat{P}\Big{(}\frac{y}{u(x,y)}\Big{)}-y=-\sum\limits_{i=1}^{\infty}a_{i}\ln
\Bigg{(}\frac{\frac{y}{x}-\alpha_{i}}{\frac{y}{u(x,y)}-\alpha_{i}}\Bigg{)}.
\end{eqnarray*}
This is satisfied by an algebraic function $u(x,y)$ and all branches of the logarithm. However, for every algebraic function $u(x,y)$ the left hand side has a finite monodromy group (for every pair $x\bl y$ it attains a finite number of values), while the right hand side has an infinite monodromy unless all $a_{i}=0$. Suppose then the latter. Yet, in this case  we have
\begin{eqnarray*}
\varpi(x,y)=\frac{x^2}{P(\frac{y}{x})},
\end{eqnarray*}
and the numerator of $\varpi$ is divisible by $x^{2+\mathrm{deg}(P)}$, which contradicts the condition that it has no multiple roots. Thus, there are no algebraic flows which are abelian flows of type II.

\appendix\section{Addendum to ``The projective translation equation and unramified $2$-dimensional flows with rational vector fields"}
\label{addendum}

The names of the Theorems and Sections in this Addendum refer to \cite{alkauskas-un}, unless otherwise stated. The references to displayed equation, like (\ref{g-parteq}), refer to the current paper only.\\

 In this Addendum we will expand Subsection 4.5 and clarify a bit Theorem 1. Here we use notations of the latter Theorem. Let $\v=[xy(x-y)]^{1/3}$. The flow $\Lambda(\m{x})=\lambda(x,y)\bl\lambda(y,x)$ is unramified. Indeed, as is clear from the explicit formulas in Theorem 2, $\cm(\v)$ and $\sm(\v)\v^{-1}$ contain only integral powers of $x$ and $y$. Let now $\v=[xy(x-y)^2]^{1/4}$. The same can be said about the flow $\Psi(\m{x})=\psi(x,y)\bl\psi(y,x)$ and the two elliptic functions $p(u)$ and $q(u)$, which were calculated in Subsection 4.5. Namely, the functions $p(\v)\v$ and $q(\v)\v$ contain only integral powers of $x$ and $y$. However, the situation with the vector field $x^2-xy\bl y^2-2xy$  is a bit more interesting. This vector field is briefly treated in the last 5 lines of Subsection 4.5. In order to fully prove Theorem 1, here we expand on this vector field.\\
  
So, let
\begin{eqnarray*}
\varpi\bl\varrho=x^2-xy\bl y^2-2xy, \quad \v=[x^3y^2(3x-2y)]^{1/6}.
\end{eqnarray*}
As we know, the flow $\alpha(x,y)\bl\beta(x,y)$, which is generated by this vector field, can be integrated explicitly in terms of Dixonian elliptic functions. Indeed, to show this, let $p$ and $q$ be two functions which satisfy the following properties. The Taylor series at $u=0$ of $p(u)$ and $q(u)$ start, respectively, at powers $u^{-1}$ and $u^2$, and 
\begin{eqnarray}
\left\{\begin{array}{c@{\qquad}c}
1\equiv p^{3}q^2(3p-2q),\\
p'=-p^{2}+pq,\\
q'=-q^{2}+2pq.
\end{array}\right.\label{p-q}
\end{eqnarray} 
Using these properties, one can recurrently calculate the unique Taylor coefficients, and we do it with the help of MAPLE:\tiny
\begin{eqnarray*}
p(u)=u^{-1}+\frac{\sqrt{3}}{12}u^{2}+\frac{1}{1008}u^{5}
+\frac{\sqrt{3}}{36288}u^{8}+\frac{11}{39626496}u^{11}
+\frac{\sqrt{3}}{118879488}u^{14}+\frac{193}{2276779954176}u^{17}
+\cdots,\\
q(u)=\frac{\sqrt{3}}{3}u^{2}-\frac{1}{18}u^{5}+\frac{43\sqrt{3}}{9072}u^{8}-\frac{31}{27216}u^{11}
+\frac{2731\sqrt{3}}{29719872}u^{14}
-\frac{47545}{2139830784}u^{17}+\frac{873815\sqrt{3}}{487881418752}u^{20}+\cdots.
\end{eqnarray*}\normalsize
Using induction we see that coefficients at $u^{6n-1}$, $n\in\mathbb{N}_{0}$, are rational numbers, and the coefficients at $u^{6n+2}$ are of the form $r\sqrt{3}$, $r\in\mathbb{Q}$. We know that the formulas for $\alpha$ and $\beta$ will contain $p(\v)\v$ and $q(\v)\v$; see Subsection 4.1. So, differently from the analysis of the flows $\Lambda$ and $\Psi$, $\alpha$ and $\beta$ will potentially contain a quadratically ramified function $\sqrt{x^3y^2(3x-2y)}$. However, note that if $(p(u),q(u))$ solves (\ref{p-q}), so does the pair of functions $(-p(-u),-q(-u))$. In case of the vector field $x^2-3xy\bl y^2-3xy$ and the flow $\Psi$, this transform is an identity map (see the formulas on the top of p. 903), since both elliptic functions involved are odd functions. Further, in case of the vector field $x^2-2xy\bl y^2-2xy$ this does not work, since the orbits $xy(x-y)$ are of degree $3$, which is an odd number, and so the pair $(-\s(-u),-\c(-u))$ parametrizes the curve $xy(x-y)=-1$ rather that the curve $xy(x-y)=1$. So, only in the case of the vector field $x^2-xy\bl y^2-2xy$ we get an interesting non-trivial situation. For example, this shows that in the the Taylor series for $p(u)$ and $q(u)$ we can assume that the symbol $``\sqrt{3}"$ stands for a positive as well as for a negative value of the square root; this does not affect the validity of (\ref{p-q}). Further, we know that the Talor series for $\alpha(x,y)$ and $\beta(x,y)$ at the origin contain only rational numbers as coefficients; see (\ref{reku}). So, $\sqrt{3}$ does not appear at all. Thus, these formulas are stable under the automorphism of $\mathbb{Q}(\sqrt{3})$, and this will show that in fact in the final formulas, there is no quadratic ramification remaining, and the flow $\alpha\bl\beta$ is indeed unramified!\\
    
We will express these (elliptic) functions $p$ and $q$ in terms of Dixonian elliptic functions $\sm$ and $\cm$. Note that the formulas on (p. 883, \cite{alkauskas-un}) give\small
\begin{eqnarray}
\s(u)=-u^2-\frac{1}{28}u^{8}-\frac{3}{2548}u^{14}-\frac{15}{387296}u^{20}-\frac{449}{352439360}u^{26}-\frac{51279}{1223669457920}u^{32}+\cdots,\label{sp}\\
\c(u)=u^{-1}-\frac{1}{2}u^2+\frac{3}{28}u^{5}-\frac{1}{56}u^{8}+\frac{33}{10192}u^{11}-\frac{3}{5096}u^{14}+\frac{579}{5422144}u^{17}-\frac{15}{774592}u^{20}+\cdots.\nonumber
\end{eqnarray}\normalsize
As we know from Proposition 3, $\s(u)$ is an even function, so the Taylor series of $\s$ contains only the powers $u^{6n+2}$, $n\in\mathbb{N}_{0}$. This already sheds a light on the phenomenon of cancellation of $\sqrt{3}$ just mentioned. Indeed, the Taylor series for $\sm$ and $\cm$ contain the powers $u^{3n+1}$ and $u^{3n}$, respectivelly. So, $\s(u)=-\frac{\sm^2(u)}{\cm(u)}$ potentially contain all positive powers $u^{3n+2}$. But, as we see, the coeffiecients at $u^{6n+5}$ cancell out. Further, inspecting the coefficients of $\c$ at $u^{6n+2}$ we see that they are equal to a half of the corresponding Taylor coefficients of $\s(u)$. Indeed, we amend the cited Proposition 3 with the following fact.
\begin{prop}
The function $2\c(u)-\s(u)$ is an odd function.
\end{prop}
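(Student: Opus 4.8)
The plan is to reduce the statement to the reflection formulas for the Dixonian functions together with one line of algebra on the Fermat relation. Recall the identity $\s(u)=-\sm^{2}(u)/\cm(u)$ recorded above; its companion, which one reads off from the expansions (\ref{sp}) (equivalently, from \cite{alkauskas-un}), is $\c(u)=\cm^{2}(u)/\sm(u)$. Hence $2\c(u)-\s(u)=\frac{2\cm^{3}(u)+\sm^{3}(u)}{\sm(u)\cm(u)}$, and since $\sm^{3}+\cm^{3}=1$ on the Fermat cubic this simplifies to $2\c(u)-\s(u)=\frac{2-\sm^{3}(u)}{\sm(u)\cm(u)}$. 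Thus everything comes down to knowing how $\sm$ and $\cm$ transform under $u\mapsto -u$.

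First I would establish the two reflection formulas $\cm(-u)=\cm^{-1}(u)$ and $\sm(-u)=-\sm(u)/\cm(u)$. Set $p(u)=\sm(-u)$ and $q(u)=\cm(-u)$; from $\sm'=\cm^{2}$ and $\cm'=-\sm^{2}$ one gets $p'=-q^{2}$, $q'=p^{2}$, while $p^{3}+q^{3}=1$. Define $A(u)=-p(u)/q(u)$ and $B(u)=q^{-1}(u)$. A direct differentiation, using $p^{3}+q^{3}=1$, gives $A'=\frac{p^{3}+q^{3}}{q^{2}}=q^{-2}=B^{2}$ and $B'=-q'/q^{2}=-A^{2}$, together with $A(0)=0$ and $B(0)=1$. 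So $(A,B)$ satisfies the very system $\sm'=\cm^{2}$, $\cm'=-\sm^{2}$ with the same initial data as $(\sm,\cm)$ (here $q(0)=\cm(0)=1\neq 0$, so $A,B$ are analytic at $0$); by uniqueness of the analytic solution near $0$ and analytic continuation, $A=\sm$ and $B=\cm$, which are exactly the two reflection formulas. This is the same reflection-and-uniqueness device already used in this Addendum for the pair $(p,q)$, so it fits the surrounding method.

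With the reflection formulas in hand I would finish by substituting $u\mapsto -u$ into the simplified expression. From $\sm(-u)^{3}=-\sm^{3}/\cm^{3}$ and $\sm(-u)\cm(-u)=-\sm/\cm^{2}$ one computes $2\c(-u)-\s(-u)=\frac{2+\sm^{3}/\cm^{3}}{-\sm/\cm^{2}}=\frac{(2-\sm^{3})/\cm^{3}}{-\sm/\cm^{2}}=-\frac{2-\sm^{3}}{\sm\cm}=-\big(2\c(u)-\s(u)\big)$, which is precisely the oddness of $2\c-\s$. Equivalently, one may verify the cleaner identity $\c(u)+\c(-u)=\frac{\cm^{3}-1}{\sm\cm}=-\frac{\sm^{2}}{\cm}=\s(u)$, saying that the even part of $\c$ equals $\tfrac12\s$; this is exactly the reformulation hinted at just before the statement.

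The only genuine obstacle is the justification of the reflection formulas; everything else is algebra on the Fermat relation $\sm^{3}+\cm^{3}=1$. If the reader prefers, the formulas $\cm(-u)=\cm^{-1}(u)$ and $\sm(-u)=-\sm(u)/\cm(u)$ can instead be quoted directly from the theory of Dixonian functions \cite{flajolet-b} or from \cite{alkauskas-un} (where the evenness of $\s$, itself an immediate consequence of them, was proved as Proposition 3), in which case the argument collapses to the two displayed computations above.
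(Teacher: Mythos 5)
Your proof is correct, but it runs one level below the paper's. The paper's argument is a two-line affair: it quotes the reflection formulae for $\s$ and $\c$ themselves from Proposition 1 of \cite{alkauskas-un} (namely $\c(-u)=\frac{1}{\s(u)\c(u)}$ together with the evenness of $\s$), writes $2\c(-u)-\s(-u)=\frac{2}{\s(u)\c(u)}-\s(u)$, and observes that the required identity collapses to the known curve equation $\s(u)\c(u)[\s(u)-\c(u)]\equiv 1$. You instead descend to the Dixonian functions: you express $\s=-\sm^{2}/\cm$ and $\c=\cm^{2}/\sm$, prove the reflection formulae $\cm(-u)=\cm^{-1}(u)$, $\sm(-u)=-\sm(u)/\cm(u)$ from scratch via the ODE system $\sm'=\cm^{2}$, $\cm'=-\sm^{2}$ and uniqueness of the analytic solution, and then reduce everything to the Fermat relation $\sm^{3}+\cm^{3}=1$. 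The two routes are structurally parallel (a reflection formula plus the defining cubic), but yours is self-contained where the paper's leans on the companion paper's Proposition 1; the price is length, the gain is that nothing about $\s,\c$ under $u\mapsto -u$ needs to be taken on faith. Your closing reformulation $\c(u)+\c(-u)=\s(u)$ (the even part of $\c$ is $\tfrac12\s$) is a nice compact restatement that the paper does not make explicit. All the computations check out, including the simplification $2\cm^{3}+\sm^{3}=2-\sm^{3}$ and the consistency of $\c=\cm^{2}/\sm$ with the expansions in (\ref{sp}).
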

\begin{proof}
Indeed, according to Proposition 1,
\begin{eqnarray*}
2\c(-u)-\s(-u)=\frac{2}{\s(u)\c(u)}-\s(u).
\end{eqnarray*}
Thus, we need to show that
\begin{eqnarray*}
\frac{2}{\s(u)\c(u)}-\s(u)=-2\c(u)+\s(u).
\end{eqnarray*}
This simplifies as $\s(u)\c(u)[\s(u)-\c(u)]\equiv 1$, which, as we know, does hold.
\end{proof}

Now, consider the following birational transformation
\begin{eqnarray}
p=-\frac{\delta+\gamma}{2},\quad q=\frac{4\sqrt{3}}{3(\gamma^{2}-\delta^2)}.\label{bir}
\end{eqnarray}
The inverse is
\begin{eqnarray}
\delta=\frac{\sqrt{3}}{3pq}-p,\quad \gamma=-\frac{\sqrt{3}}{3pq}-p.\label{pq-inverse}
\end{eqnarray}
A direct calculation shows that this transforms our elliptic curve $p^3q^2(3p-2q)=1$ into
\begin{eqnarray}
\delta\gamma(\delta-\gamma)=\frac{4\sqrt{3}}{9}.
\label{cubic}
\end{eqnarray}
Using the Taylor series for $p$, $q$ and the transformation (\ref{pq-inverse}), we can calculate the Taylor series at the origin of $\delta$ and $\gamma$: \tiny
\begin{eqnarray*}
\delta=-\frac{\sqrt{3}}{9}u^2-\frac{\sqrt{3}}{27216}u^8
-\frac{\sqrt{3}}{89159616}u^{14}-\frac{5\sqrt{3}}{1463644256256}u^{20}-\frac{449\sqrt{3}}{431540872514519040}
u^{26}+\cdots,\\
\gamma=-2u^{-1}-\frac{\sqrt{3}}{18}u^2-\frac{1}{504}u^5-\frac{\sqrt{3}}{54432}u^8-\frac{11}{19813248}u^{11}
-\frac{\sqrt{3}}{178319232}u^{14}-\frac{193}{1138389977088}u^{17}+\cdots.
\end{eqnarray*}\normalsize
Compare these expansions to the expansions of $\s(u)$ and $\c(u)$. The coefficient at $u^{26}$ of both $\s$ and $\delta$ have the prime number $449$ in the numerator, while the coefficients of $\c$ and $\gamma$ at $u^{17}$ have the prime number $193$ in the numerator: $579=193\cdot 3$. Also, $(\alpha,\beta)$ parametrizes the same elliptic curve as $(\s,\c)$, only the scaled one. This suggests the following proposition.
\begin{prop}The elliptic functions $\alpha$ and $\beta$ are given by 
\begin{eqnarray*}
\delta=\frac{\sqrt[3]{4}}{\sqrt{3}}\s\Big{(}-\frac{u}{\sqrt[3]{2}\sqrt{3}}\Big{)},\quad
\gamma=\frac{\sqrt[3]{4}}{\sqrt{3}}\c\Big{(}-\frac{u}{\sqrt[3]{2}\sqrt{3}}\Big{)}.
\end{eqnarray*}
\end{prop}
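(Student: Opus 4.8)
The plan is to show that the two sides of each claimed identity solve one and the same first-order autonomous differential system, parametrize the same (rescaled) cubic, and share the same leading Laurent coefficient at $u=0$; uniqueness of the analytic solution with a prescribed principal part then forces equality. Accordingly the argument splits into three matchings: of the underlying curve (which fixes the amplitude constant), of the differential system (which fixes the rescaling of the argument), and of the leading coefficient (which removes the residual cube-root-of-unity and sign ambiguities).

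First I would derive the autonomous system satisfied by $\delta$ and $\gamma$. Differentiating the inverse relations (\ref{pq-inverse}) and inserting (\ref{p-q}), one finds $(pq)'=p^{2}q$, whence $\delta'=-\frac{\sqrt3}{3q}+p^{2}-pq$ and $\gamma'=\frac{\sqrt3}{3q}+p^{2}-pq$. Re-expressing $p$, $pq$ and $\frac{\sqrt3}{3q}$ through $\delta,\gamma$ by means of (\ref{bir}), and eliminating $\delta-\gamma$ with the curve relation (\ref{cubic}), these collapse to the symmetric system
\[
\delta'=\tfrac12\delta^{2}-\delta\gamma,\qquad \gamma'=\tfrac12\gamma^{2}-\delta\gamma .
\]
On the Dixonian side, from $\s=-\sm^{2}/\cm$ with $\sm'=\cm^{2}$, $\cm'=-\sm^{2}$, together with the companion identity $\s\c(\s-\c)\equiv1$ already used in the Proposition on the oddness of $2\c-\s$, one checks that $(\s,\c)$ parametrizes $XY(X-Y)=1$ and obeys $\s'=-\s^{2}+2\s\c$, $\c'=-\c^{2}+2\s\c$.

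Next I would verify that the substitution $\delta(u)=A\,\s(Bu)$, $\gamma(u)=A\,\c(Bu)$, with $A=\sqrt[3]{4}/\sqrt3$ and $B=-1/(\sqrt[3]{2}\sqrt3)$, carries the Dixonian system exactly onto the $\delta,\gamma$ system. Matching curves forces $A^{3}=\frac{4\sqrt3}{9}$, which does hold, since $A^{3}\cdot\s\c(\s-\c)=A^{3}$. The chain rule gives $\delta'=\frac{B}{A}\bigl(-\delta^{2}+2\delta\gamma\bigr)$, so the two systems agree precisely when $B/A=-\tfrac12$, which one checks directly from the stated constants. Finally I would match principal parts: from (\ref{sp}) one reads off $\s(u)\sim-u^{2}$ and $\c(u)\sim u^{-1}$, so the rescaling yields $A\,\s(Bu)\sim-\frac{\sqrt3}{9}u^{2}$ and $A\,\c(Bu)\sim-2u^{-1}$, in agreement with the computed expansions of $\delta$ and $\gamma$; this selects the correct cube root in $A$ and the sign of $B$.

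The main obstacle is not the algebra but the branch-and-uniqueness bookkeeping. Because $\gamma$ and $\c$ have a simple pole at the origin, the relevant initial datum is a Laurent principal part rather than a Taylor value, so I must invoke uniqueness for the autonomous system within the class of functions carrying that prescribed pole; moreover the curve alone fixes $A$ only up to a cube root of unity, the correct root being pinned only by the leading coefficient. This is exactly where the cancellation-of-$\sqrt3$ phenomenon and the $(p,q)\mapsto(-p(-u),-q(-u))$ symmetry discussed above enter: they certify that the sign choices are mutually consistent and that no spurious quadratic ramification persists, so that the matched leading terms propagate legitimately to full equality of the two parametrizations.
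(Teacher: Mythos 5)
Your proposal is correct and follows essentially the same route as the paper: both determine $A$ and $B$ by matching leading Laurent coefficients and then verify that the scaled Dixonian pair satisfies the defining differential--algebraic system, concluding by uniqueness of the expansion with prescribed principal part. The only difference is cosmetic: you carry out the verification in the $(\delta,\gamma)$ coordinates (deriving $\delta'=\tfrac12\delta^{2}-\delta\gamma$, $\gamma'=\tfrac12\gamma^{2}-\delta\gamma$), whereas the paper pulls back through the birational map (\ref{bir}) and checks the three properties (\ref{p-q}) for $p$ and $q$ directly.
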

\begin{proof}
Assume that we know in advance that
\begin{eqnarray}
\delta(u)=A\s(Bu),\quad \gamma(u)=A\c(Bu).\label{defini}
\end{eqnarray}
Comparing the leading coeffcients of all for functions involved, we get
\begin{eqnarray*}
\frac{\sqrt{3}}{9}=AB^{2},\quad -2=\frac{A}{B}.
\end{eqnarray*}
This gives
\begin{eqnarray*}
B=-\frac{1}{\sqrt[3]{2}\sqrt{3}},\quad A=\frac{\sqrt[3]{4}}{\sqrt{3}}.
\end{eqnarray*}
Knowing this, let us \emph{define} the functions $\delta$ and $\gamma$ by (\ref{defini}), and the let us \emph{define} the functions $p$ and $q$ by (\ref{bir}). Using the known properties of $\s$ and $\c$ we readily verify all the three properties of $p$ and $q$ given by (\ref{p-q}).
\end{proof}
So, we can express our functions $p$ and $q$ in terms of $\sm$ and $\cm$ as follows (all the arguments of $\s,\c,\sm,\cm$ are equal to $Bu$).
\begin{eqnarray*}
p(u)&=&-\frac{A\s+A\c}{2}=\frac{A\sm^{2}}{2\cm}-\frac{A\cm^{2}}{2\sm}=
\frac{A\sm^{3}-A\cm^{3}}{2\sm\cm}=\frac{2A\sm^{3}-A}{2\sm\cm},\\
q(u)&=&\frac{4\sqrt{3}}{3[A^2\c^2-A^2\s^2]}=
\frac{4\sqrt{3}}{3A^2[\frac{\cm^4}{\sm^2}-\frac{\sm^4}{\cm^2}]}
=\frac{4\sqrt{3}\sm^2\cm^2}{3A^2[\cm^6-\sm^6]}=
\frac{4\sqrt{3}\sm^2\cm^2}{3A^2[1-2\sm^3]}.
\end{eqnarray*}
In this form, the properties (\ref{p-q}) are easily verified by hand, using the properties of $\sm$ and $\cm$ given by Proposition 9.\\

Now, the pair $(\s,\c)$ is just a scaling of $(\delta,\gamma)$, and the latter, by the properties (\ref{bir}) and (\ref{pq-inverse}), is a birational transform of $(p,q)$. Since, according to Proposition 3, we have rational addition formulas for $\s(u-v)$ and $\c(u-v)$, this implies rational addition formulas for $p$ and $q$.\\

Now we are ready to explicitely integrate the vector field $\varpi\bl\varrho =x^2-xy\bl y^2-2xy$. Let, as already defined in (\cite{alkauskas-un}, Theorem 1), $\alpha\bl\beta$ be this flow.
Let $x=p(u)\v$, $y=q(u)\v$, $\v=[x^3y^2(3x-2y)]^{1/6}$. Similarly as in the (\cite{alkauskas-un}, Subsection 4.1), we find that the solution of the differential equation (\ref{g-parteq}) and the boudnary condition (\ref{bound}) in case of the vector field $x^2-xy\bl y^2-2xy$ satisfies
\begin{eqnarray*}
\alpha(x,y)=\alpha\Big{(}p(u)\v,q(u)\v\Big{)}=p(u-\v)\v,\\
\beta(x,y)=\beta\Big{(}p(u)\v,q(u)\v\Big{)}=q(u-\v)\v.
\end{eqnarray*}
So,
\begin{eqnarray*}
\alpha(x,y)=-\frac{\v}{2}\Big{(}\delta(u-\v)+\gamma(u-\v)\Big{)}=-\frac{A\v}{2}
\s(Bu-B\v)-\frac{A\v}{2}\c(Bu-B\v).
\end{eqnarray*}
Further, we use addition formulas for the elliptic functions $\s$ and $\c$, as given by Proposition 3. Thus,\small
\begin{eqnarray}
\alpha=
-\frac{A\v(C_{1}+C_{2}-S_{1}S_{2}C_{1}C_{2})^{2}S_{1}S_{2}}
{2(1-S_{1}^{2}S_{2}^{2}C_{1}C_{2})(S_{1}S_{2}C_{1}+S_{1}S_{2}C_{2}-1)}
-\frac{A\v(S_{1}S_{2}C_{1}+S_{1}S_{2}C_{2}-1)^{2}C_{1}C_{2}}
{2(1-S_{1}^{2}S_{2}^{2}C_{1}C_{2})(C_{1}+C_{2}-S_{1}S_{2}C_{1}C_{2})},\label{alpha}
\end{eqnarray}
\normalsize
where
\begin{eqnarray*}
S_{1}=\s(Bu),\quad C_{1}=\c(Bu),\quad 
S_{2}=\s(-B\v),\quad C_{2}=\c(-B\v).
\end{eqnarray*}
Now, $S_{1}$ and $C_{1}$ are just rational functions in $p(u)=x\v^{-1}$ and $q(u)=y\v^{-1}$:
\begin{eqnarray*}
S_{1}=\s(Bu)&=&\frac{\delta(u)}{A}=\frac{\sqrt{3}}{3Ap(u)q(u)}-\frac{p(u)}{A}=
\frac{\sqrt{3}\v^{2}}{3Axy}-\frac{x}{A\v},\\
C_{1}=\c(Bu)&=&\frac{\gamma(u)}{A}=-\frac{\sqrt{3}}{3Ap(u)q(u)}-\frac{p(u)}{A}=-\frac{\sqrt{3}\v^{2}}{3Axy}-\frac{x}{A\v}.
\end{eqnarray*}
Note that in the last expression, $\v$ appears with exponents $2$ and $-1$, and $2-(-1)=3$; this is important. Now, plug the last two expressions into (\ref{alpha}). A careful inspection of all the terms shows that $\alpha(x,y)$ is a rational function in $x,y$, $\v\s(-B\v)$, $\v^{-2}\s(-B\v)$, $\v\c(-B\v)$, and $\v^{-2}(-B\v)$. Indeed, in (\ref{alpha}) consider, for example,  the first summand, and the term of the numerator which is equal to $A\v C_{1}^{2}S_{1}S_{2}$. For it, we have
\begin{eqnarray*}
A\v C_{1}^{2}S_{1}S_{2}&=&A\v\Big{(}\frac{\sqrt{3}\v^{2}}{3Axy}+\frac{x}{A\v}\Big{)}^2\cdot\Big{(}\frac{\sqrt{3}\v^{2}}{3Axy}-\frac{x}{A\v}\Big{)}\cdot\s(-B\v)\\
&=&A^{-2}\v^{-2}\Big{(}\frac{\sqrt{3}\v^{3}}{3xy}+x\Big{)}^2\cdot\Big{(}\frac{\sqrt{3}\v^{3}}{3xy}-x\Big{)}\cdot\s(-B\v),
\end{eqnarray*}
and the claim is obvious, since $\v^{6}=x^3y^2(3x-2y)$. Moreover, expand $\s(-Bu)$ using the Taylor series given by (\ref{sp}). Then the Taylor coefficients of the last expression are either rational numbers, or are of the form $r\sqrt{3}$, $r\in\mathbb{Q}$. This is clear, since the Taylor series of $\s(-B\v)$ starts at $-B^{2}\v^{2}$, and $\frac{B^2}{A^{2}}$ is a rational number. Moreover, and this is the most important thing when we talk about ramification, the coefficients at $\v^{6n}$ are rational numbers, and the  coefficients at $\v^{6n+3}$ are of the form $\sqrt{3}r$, $r\in\mathbb{Q}$. We thus inspect all the terms in (\ref{alpha}) and arrive at exactly the same conclusion. it is important that if $S_{1},C_{1},S_{2},C_{2}$ are given weight $1$, then the weights in the two numerators and denominators in (\ref{alpha}) are all equal modulo $3$. The
 formula (\ref{alpha}) is completely analogous to the formula presented in Theorem 2, only we do not write the lengthy addition formulas explicitly. As mentioned before, the presented formula for $\alpha$ potentially contain $\v^{6n+3}=[x^3y^2(3x-2y)]^{(2n+1)/2}$ and hence a ramification, since $p(\v)$ and $q(\v)$ contain this. However, the coefficients at $\v^{6n+3}$ are necessarily of the form $r\sqrt{3}$, $r\in\mathbb{Q}$. But we know that $\alpha$, if written as a function in $x,y$ and $\v$, is an even function in $\v$. This is clear from the fact that the exact value for $\alpha$, which is unique, does not change if we assume that the sign $``\sqrt{3}"$ stands for a negative value of the square root rather than a positive. The same conclusion follows for the function $\beta$. And so, the flow $\alpha\bl\beta$ is indeed unramified! This finishes the case 6) of Theorem 1.\\

We note that the constant $\pi_{3}^{6}$ on p. 883 was miscalculated by the factor $27$. The correct value is
\begin{eqnarray*}
\pi_{3}^{6}=820.824437079556_{+}\cdot 27=22162.259801148018_{+}.
\end{eqnarray*}

\end{document}